\date{}
 \newcommand{\norm}[1]{\left\Vert#1\right\Vert}
\newcommand{\snorm}[1]{\Vert#1\Vert}
\newcommand{\abs}[1]{\left\vert#1\right\vert}
\newcommand{\set}[1]{\left\{#1\right\}}
\newcommand{\brac}[1]{\left(#1\right)}
\newcommand{\scalar}[1]{\left \langle #1 \right \rangle}
\newcommand{\Real}{\mathbb{R}}
\newcommand{\E}{\mathbb{E}}
\newcommand{\med}{\text{med}}
\renewcommand{\P}{\mathbb{P}}
\newcommand{\Poin}{\text{\rm{Poin}}}
\newcommand{\Lin}{\text{\rm{Lin}}}
\newcommand{\Che}{\text{\rm{Che}}}
\newcommand{\eps}{\epsilon}
\newcommand{\K}{\mathcal{K}}
\newcommand{\Vol}{\mbox{\rm{Vol}}}
\renewcommand{\H}{\mathcal{H}}
\newcommand{\Var}{\mathbb{V}\text{\rm{ar}}}
\DeclareMathOperator*{\argmax}{arg\,max}
\DeclareMathOperator*{\esssup}{ess\,sup}
\def\XXint#1#2#3{{\setbox0=\hbox{$#1{#2#3}{\int}$}
\vcenter{\hbox{$#2#3$}}\kern-.5\wd0}}
\newtheorem{thm}{Theorem}[section]
\newtheorem*{thm*}{Theorem}
\newtheorem{cor}[thm]{Corollary}
\newtheorem{lem}[thm]{Lemma}
\newtheorem{prop}[thm]{Proposition}
\newtheorem*{prop*}{Proposition}
\newtheorem*{problem*}{Problem}
\newtheorem*{conj*}{Conjecture}
\newtheorem*{dfn*}{Definition}
\theoremstyle{definition}
\newtheorem{rem}[thm]{Remark}
\newtheorem*{rem*}{Remark}
\newtheorem*{fact*}{Fact}
\newtheorem{example}[thm]{Example}
\theoremstyle{proof}
\numberwithin{equation}{section}
\begin{document}

\title{The KLS Isoperimetric Conjecture for\\ Generalized Orlicz Balls}
\author{Alexander V. Kolesnikov\textsuperscript{1} and Emanuel Milman\textsuperscript{2}}
\footnotetext[1]{Faculty of Mathematics, Higher School of Economics, Moscow, Russia. 
The author was supported by RFBR project 17-01-00662 and DFG project RO 1195/12-1. The article was prepared 
within the framework of the Academic Fund Program at the National Research University Higher School of
Economics (HSE) in 2017-2018 (grant No 17-01-0102) and by the Russian Academic Excellence Project ``5-100". 
Emails:  akolesnikov@hse.ru, sascha77@mail.ru.}
\footnotetext[2]{Department of Mathematics, Technion - Israel
Institute of Technology, Haifa 32000, Israel. The research leading to these results is part of a project that has received funding from the European Research Council (ERC) under the European Union's Horizon 2020 research and innovation programme (grant agreement No 637851). Email: emilman@tx.technion.ac.il.}

\begingroup    \renewcommand{\thefootnote}{}    \footnotetext{2010 Mathematics Subject Classification: 60D05, 52A23, 46B07.}
    \footnotetext{Keywords: KLS Conjecture, Spectral-Gap, Convex Bodies, Generalized Orlicz Balls.}
\endgroup

\maketitle

\begin{abstract}
.~What is the optimal way to cut a convex bounded domain $K$ in Euclidean space $(\Real^n,\abs{\cdot})$ into two halves of equal volume, so that the interface between the two halves has least surface area? A conjecture of Kannan, Lov\'asz and Simonovits asserts that, if one does not mind gaining a universal numerical factor (independent of $n$) in the surface area, one might as well dissect $K$ using a hyperplane. This conjectured essential equivalence between the former non-linear isoperimetric inequality and its latter linear relaxation, has been shown over the last two decades to be of fundamental importance to the understanding of volume-concentration and spectral properties of convex domains. 
In this work, we address the conjecture for the subclass of generalized Orlicz balls
\[
K = \{x \in \Real^n \; ; \; \sum_{i=1}^n V_i(x_i) \leq E \} ,
\] 
confirming its validity for certain levels $E \in \Real$ under a mild technical assumption on the growth of the convex functions $V_i$ at infinity (without which we confirm the conjecture up to a $\log(1+n)$ factor). In sharp contrast to previous approaches for tackling the KLS conjecture, we emphasize that no symmetry is required from $K$. This significantly enlarges the subclass of convex bodies for which the conjecture is confirmed. 
\end{abstract}

\section{Introduction}

\subsection{A Conjecture of Kannan--Lov\'asz--Simonovits}

Given a separable metric space $(X,d)$ endowed with a Borel probability measure $\mu$, Minkowski's
(exterior) boundary measure of a Borel set $A \subset X$, denoted $\mu^+(A)$, is
defined as $\mu^+(A) := \liminf_{\eps \to 0} \frac{\mu(A^d_{\eps}) -\mu(A)}{\eps}$, where $A^d_{\eps} := \set{x \in X ; \exists y \in A \;\; d(x,y) < \eps}$ denotes the $\eps$-neighborhood of $A$ in $(X,d)$. The Cheeger constant is then defined as:
\begin{equation} \label{eq:Che}
D_{\Che}(X,d,\mu) := \inf_{A \subset X} \frac{\mu^+(A)}{\min(\mu(A),1-\mu(A))} ~,
\end{equation}
measuring a certain isoperimetric property of the space $(X,d,\mu)$. In this work, we restrict our scope to the Euclidean setting $(X,d) = (\Real^n,\abs{\cdot})$, and simply write $D_{\Che}(\mu) = D_{\Che}(\Real^n,\abs{\cdot},\mu)$. In the latter linear setting, we can also introduce the following \emph{linear relaxation} of the Cheeger constant, defined as:
\[
D_{\Che}^{\Lin}(\mu) := \inf_{\footnotesize \begin{array}{c} H \subset \Real^n\\H \text{ is a halfspace} \end{array}} \frac{\mu^+(H)}{\min(\mu(H),1-\mu(H))} .
\]
Note that when $A$ has smooth boundary and $\mu$ is supported on a set $\Omega$ having Lipschitz boundary and has continuous density $\Psi$ in $\Omega$, then $\mu^+(A) = \int_{\partial A \cap \text{int}(\Omega)} \Psi(x) d\H^{n-1}(x)$, where $\H^k$ denotes the $k$-dimensional Hausdorff measure.

Clearly $D_{\Che}^{\Lin}(\mu) \geq D_{\Che}(\mu)$, and in general it is not hard to see that this inequality cannot be reversed in any weak sense, as the right-hand-side may be zero.  
However, when $\mu = \lambda_K$, where $\lambda_K$ denotes the uniform (Lebesgue) probability measure on $K \subset \Real^n$,  a convex compact set with non-empty interior (``convex body"), Kannan, Lov\'asz and Simonovits (KLS) conjectured in \cite{KLS} (using an equivalent formulation) that:
\begin{equation} \label{eq:KLS}
D_{\Che}(\lambda_K)  \geq c D_{\Che}^{\Lin}(\lambda_K) ,
\end{equation}
for some universal numeric constant $c > 0$, independent of any other parameters such as $n$ or $K$. We reserve in this work the use of $c,C,C_1,C_2,c',C',C''$ etc...~to denote such positive universal numeric constants. 

\smallskip

Recall that a measure $\mu$ on $\Real^n$ is called log-concave if $\mu = \exp(-V(x)) dx$ with $V : \Real^n \rightarrow \Real \cup \set{+\infty}$ convex; in particular, $\mu = \lambda_K$ is log-concave. The class of log-concave probability measures on affine subspaces of $\Real^N$ for all $N \geq 1$ is the smallest class containing $\lambda_K$ for all convex bodies $K \subset \Real^N$ (for all $N \geq 1$) which is in addition closed under taking marginals and weak limits (see e.g. \cite{SpanishBook}). 
It is not hard to see that a positive answer to the KLS conjecture would also lead to a positive answer to the analogous question for the entire class of log-concave probability measures, so   it is also interesting to study the conjecture in this extended generality.

\begin{rem*}
 It is known that for a log-concave measure $\mu$, the infimum in (\ref{eq:Che}) is attained for a Borel set $A$ of measure $1/2$ (see Sternberg--Zumbrun \cite{SternbergZumbrun} for $\lambda_K$, Bobkov \cite{BobkovExtremalHalfSpaces} for the one-dimensional case and \cite{EMilman-RoleOfConvexity} in general), and the same applies to its linear relaxation:
\begin{align*}
D_{\Che}(\mu) & = 2 \inf_{A \subset \Real^n} \set{ \mu^+(A) \; ; \; \mu(A) = 1/2 } ~,\\
D_{\Che}^{\Lin}(\mu) & = 2 \inf_{\footnotesize \begin{array}{c} H \subset \Real^n\\H \text{ is a halfspace} \end{array}} \set{ \mu^+(H) \; ; \; \mu(H) = 1/2 } . 
\end{align*}
So the KLS conjecture ultimately pertains to the isoperimetric behaviour of sets having measure $1/2$.
\end{rem*}

The KLS conjectured essential equivalence between the former non-linear isoperimetric inequality and its latter linear relaxation, has been shown over the last two decades to be of fundamental importance to the understanding of volumetric and spectral properties of convex domains, revealing numerous connections to other central conjectures on the concentration of volume in convex bodies (see e.g. \cite{LedouxSpectralGapAndGeometry,BobkovKoldobsky,FleuryOnVarianceConjecture,LatalaJacobInfConvolution,BallNguyen-KLSImpliesSlicing,EldanKlartagThinShellImpliesSlicing,Eldan-StochasticLocalization} or the monograph \cite{SpanishBook} for a nice overview). Let us only mention here the following equivalent formulation of the KLS conjecture, which has a clear analytic interpretation. 

Denote by $D_{\Poin}(\mu)$ the Poincar\'e constant of $\mu$, namely the best possible constant in the following Poincar\'e inequality:
\begin{equation} \label{eq:Poincare}
\snorm{f - \int f d\mu}_{L^2(\mu)} \leq D_{\Poin}(\mu) \norm{\abs{\nabla f}}_{L^2(\mu)} \;\;\; \text{for all Lipschitz $f : \Real^n \rightarrow \Real$} .
\end{equation}
When $\mu = \lambda_K$,  $D_{\Poin}(\lambda_K) = 1 / \sqrt{\lambda_1(K)}$ where $\lambda_1(K)$ denotes the first non-zero eigenvalue of the Neumann Laplacian on $K$ (a similar interpretation holds for a general $\mu$ using an appropriate weighted Laplacian). We denote by $D_{\Poin}^{\Lin}(\mu)$ the linear relaxation obtained by only testing (\ref{eq:Poincare}) on linear functionals $f(x) = \scalar{x,\theta}$; clearly $D_{\Poin}^{\Lin}(\mu) \leq D_{\Poin}(\mu)$. It is known by results of Maz'ya \cite{MazyaSobolevImbedding}, Cheeger \cite{CheegerInq}, Buser \cite{BuserReverseCheeger} and Ledoux \cite{LedouxSpectralGapAndGeometry}, that for all log-concave probability measures $\mu$ on $\Real^n$:
\[
 \frac{1}{2} D_{\Che}(\mu) \leq \frac{1}{D_{\Poin}(\mu)} \leq C \; D_{\Che}(\mu),
\]
for some universal constant $C > 1/2$; the same inequality also holds for the corresponding linear relaxations $D_{\Che}^{\Lin}(\mu)$ and $D_{\Poin}^{\Lin}(\mu)$. Consequently, the KLS conjecture may be equivalently reformulated as asserting that:
\begin{equation} \label{eq:KLS2}
D_{\Poin}(\mu) \leq C \; D_{\Poin}^{\Lin}(\mu) ,
\end{equation}
for some universal constant $C > 1$ and all log-concave measures $\mu$. In other words, the KLS conjecture asserts that for log-concave measures (and in particular, on convex bodies), the Poincar\'e inequality (\ref{eq:Poincare}) should be \emph{essentially} saturated by linear functionals.

\subsection{Previously Known Results}

More than two decades after being put forth, the KLS conjecture is still unresolved, and the presently best known (dimension-dependent) estimate on $C = C_n$ in (\ref{eq:KLS2}) 
is $C_n \leq C n^{1/4}$, obtained very recently (after this work was posted on the arXiv) by Y.~T.~Lee and S.~Vempala \cite{LeeVempala-KLS} by employing the remarkable Stochastic Localization method of R.~Eldan \cite{Eldan-StochasticLocalization}; previous contributions include those by KLS \cite{KLS}, S.~Bobkov \cite{BobkovVarianceBound}, B.~Klartag \cite{KlartagCLP,KlartagCLPpolynomial}, B.~Fleury \cite{FleuryImprovedThinShell} and O.~Gu\'edon and Milman \cite{GuedonEMilmanInterpolating}. 
The conjecture has been confirmed (uniformly in $n$) for unit-balls of $\ell_p^n$ (by S.~Sodin \cite{SodinLpIsoperimetry} when $p \in [1,2]$ and R.~Lata{\l}a and J.~Wojtaszczyk \cite{LatalaJacobInfConvolution} when $p \in [2,\infty]$), the simplex by F.~Barthe and P.~Wolff \cite{BartheWolffGammaDistributions}, convex bodies of revolution by N.~Huet \cite{HuetSphericallySymmetric}, convex sets of bounded volume-ratio constructed in a certain manner from log-concave measures which satisfy the conjecture \cite{KolesnikovEMilman-HardyKLS}, linear images and Cartesian products of these subclasses (see Bobkov--Houdr\'e \cite{BobkovHoudre} for the latter) and various perturbations thereof \cite{EMilman-RoleOfConvexity,EMilmanGeometricApproachPartII}. 
For the interesting class of unconditional convex bodies (invariant under reflections with respect to the coordinate hyperplanes), the best known estimate $C_n = C \log(1+n)$ was established by B.~Klartag \cite{KlartagUnconditionalVariance}. In addition, the conjecture has been established in a certain weak sense for random Gaussian polytopes (with high-probability) by B.~Fleury \cite{Fleury-ThinShellForRandomBodies}. 

\smallskip

Besides these subclasses of convex bodies and their natural extensions to the log-concave setting, the extended KLS conjecture has also been confirmed for rotation invariant log-concave measures by S.~Bobkov \cite{Bobkov-SpectralGapForSphericallySymmetric} (see also \cite{HuetSphericallySymmetric} for generalizations), for log-concave measures with strictly convex potentials $V$ by Bakry--\'Emery \cite{BakryEmery}, for certain Gibbs measures corresponding to conservative spin systems by Barthe--Wolff \cite{BartheWolffGammaDistributions} and Barthe--Milman \cite{BartheEMilmanConservativeSpins}, for certain log-concave measures supported in a cube by Klartag \cite{Klartag-ConcentrationOfMeasureOnCube}, and for unconditional measures with strictly positive derivatives in the principle directions by the authors in \cite{KolesnikovEMilman-RiemannianMetrics}. In addition, Klartag's $C_n = C \log(1+n)$ estimate for unconditional log-concave measures has been generalized to log-concave measures enjoying more general symmetries by Barthe and D.~Cordero--Erausquin \cite{BartheCorderoVariance}. To the best of our knowledge, this is essentially a complete list. 

\subsection{Generalized Orlicz Balls}

The above results typically make heavy use of the symmetries possessed by $K$ or $\mu$. In this work, we address the KLS conjecture for a certain family of convex bodies which may be called \emph{generalized Orlicz balls}. Contrary to the standard definition of these bodies in the literature (see e.g. \cite{SpanishBook}), we emphasize that our definition does not impose any symmetry conditions on these bodies. 

\begin{dfn*}
A convex body $K \subset \Real^n$ is called a generalized Orlicz ball if there exist $n$ one-dimensional convex functions $V_i : \Real \rightarrow \Real$ and $E \in \Real$ so that:
\[
K = \set{ x \in \Real^n \; ; \; \sum_{i=1}^n V_i(x_i) \leq E } .
\]
\end{dfn*}

The traditional definition also requires that $V_i$ be even functions which vanish at the origin, so that the resulting class is always unconditional - we will call such bodies \emph{unconditional generalized Orlicz balls}. In that case, $K$ is the unit-ball of the generalized Orlicz norm:
\begin{equation} \label{eq:norm}
\norm{x}_{K} := \inf \set{ t > 0 \; ; \; \sum_{i=1}^n V_i(x_i/t) \leq E } ;
\end{equation}
indeed, the convexity of $V_i$ ensures the validity of the triangle inequality, and the symmetry of $V_i$ ensures that $\norm{-x}_K = \norm{x}_K$, so that this defines a norm (with an unconditional basis). By abuse of notation, we will still refer to (\ref{eq:norm}) as a norm as soon as $K$ contains the origin in its interior, even without any symmetry assumptions on $V_i$. As shown by Wojtaszczyk \cite{CoordCorrelationForOrliczNorms}, contrary to general unconditional convex bodies, unconditional generalized Orlicz balls enjoy the following negative correlation property (first noted by Anttila--Ball--Perissinaki \cite{ABP} for unit-balls of $\ell_p^n$):
\begin{equation} \label{eq:corr}
\E X_i^2 X_j^2 \leq \E X_i^2 \E X_j^2 \;\;\; \forall i \neq j ,
\end{equation}
where $X$ is a random-vector uniformly distributed in $K$. 
Naturally, this property heavily relies on the underlying symmetry, and is very helpful in establishing various concentration properties for this class; for instance, using an extension of (\ref{eq:corr}) due to Pilipczuk--Wojtaszczyk \cite{PilipczukWojtaszczyk}, Fleury \cite{FleuryOnVarianceConjecture} showed that for unconditional generalized Orlicz balls, $\abs{X}$ is optimally concentrated around its mean. However, to the best of our knowledge, even for this subclass of unconditional bodies, the best estimate on $C = C_n$ in the KLS conjecture (\ref{eq:KLS2}) is the general one for unconditional bodies $C_n = C \log(1+n)$ due to Klartag \cite{KlartagUnconditionalVariance}. 

\subsection{Simplified Main Results}

In this work, we \emph{do not} impose any symmetry assumptions on $V_i$, and in particular \emph{do not} (and cannot) employ (\ref{eq:corr}) at all. We formulate our main results in full generality in the next section, but for now we only state the following simplified version: 
\begin{thm}[Simplified Main Theorem] \label{thm:intro}
For each $i=1,\ldots,n$, let $V_i : \Real \rightarrow \Real$, $i=1,\ldots,n$, denote a convex function normalized so that $\min V_i = 0$ and so that $\mu_i := \exp(-V_i(y)) dy$ is a probability measure on $\Real$ with barycenter at the origin. Given $E > 0$, set:
\begin{equation} \label{eq:KE}
K_E := \set{ x \in \Real^n \; ; \; \sum_{i=1}^n V_i(x_i) \leq E} .
\end{equation}
Let $X_i$ denote random-variables distributed according to $\mu_i$, and set:
\begin{equation} \label{eq:EV-XI}
E_V := 1 + \sum_{i=1}^n \E V_i(X_i) . 
\end{equation}
Then $E_V \leq n+1$, and for $E = E_V$ we have:
\begin{equation} \label{eq:intro-vol}
\frac{1}{C} \leq \Vol(K_E)^{\frac{1}{n}} \leq C ,
\end{equation}
and:
\begin{equation} \label{eq:intro-result}
 D_{\Poin}(\lambda_{K_E}) \leq C \log(e+ A^{(2)} \wedge n) D_{\Poin}^{\Lin}(\lambda_{K_E}) .
\end{equation}
Here $C>1$ is a universal constant, and:
\begin{equation} \label{eq:intro-psi1}
A^{(2)} := \frac{1}{\sqrt{n}} \norm{(\alpha^{(2)}_i)_{i=1}^n}_2 ~,~\alpha^{(2)}_i := \norm{ V_i'(y) y }_{L^2(\mu_i)} . 
\end{equation}
\end{thm}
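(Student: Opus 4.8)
The plan is to prove the three assertions in turn; the first two are soft and the third carries the substance.

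\emph{That $E_V\le n+1$.} It is enough to show $\E V_i(X_i)\le 1$ for each $i$. Introduce the sublevel‑length function $v_i(t):=\H^1(\set{y\in\Real:V_i(y)\le t})$, which is concave and non‑decreasing on $[0,\infty)$ (it is the length of the interval $\set{V_i\le t}$, and sublevel sets of a convex function behave concavely under Minkowski combination), and note that the push‑forward of Lebesgue measure under $y\mapsto V_i(y)$ equals $dv_i$. Hence $1=\int_{\Real}e^{-V_i}\,dy=\int_0^\infty e^{-t}\,dv_i(t)$ and $\E V_i(X_i)=\int_0^\infty t\,e^{-t}\,dv_i(t)$, so that $\E V_i(X_i)\le 1$ follows by matching the single sign change of $(t-1)e^{-t}$ at $t=1$ against the monotonicity of $v_i'$ and using $\int_0^\infty(t-1)e^{-t}\,dt=0$. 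Summing, $E_V=1+\sum_i\E V_i(X_i)\le n+1$.

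\emph{The volume estimate.} Set $\mu:=\otimes_{i=1}^n\mu_i$ and $\Phi(x):=\sum_i V_i(x_i)$, so that $K_t=\set{\Phi\le t}$, $d\mu=e^{-\Phi}\,dx$, and $0\le\Phi$ (hence $e^{-\Phi}\le 1$). Since $\Phi\le t$ on $K_t$,
\[
\Vol(K_t)=\int_{K_t}e^{\Phi}\,d\mu\le e^{t}\mu(K_t)\le e^{t},
\]
whence $\Vol(K_{E_V})^{1/n}\le e^{(n+1)/n}\le e^{2}$. For the lower bound, let $X\sim\mu$ and $S:=\Phi(X)$, so that $\E S=E_V-1$; then, using $\mu(A)\le\Vol(A)$ and independence,
\[
\Vol(K_{E_V})\ge\mu(K_{E_V})=\P(S\le E_V)\ge\P\Big(\bigcap_{i=1}^n\set{V_i(X_i)\le\E V_i(X_i)}\Big)=\prod_{i=1}^n\P\big(V_i(X_i)\le\E V_i(X_i)\big)\ge 2^{-n},
\]
the last inequality because each $V_i(X_i)$ has the non‑increasing density $e^{-t}v_i'(t)$ on $(0,\infty)$ (concavity of $v_i$ again), so its distribution function is concave and its median therefore does not exceed its mean. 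Thus $\Vol(K_{E_V})^{1/n}\ge 1/2$.

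\emph{The spectral‑gap estimate: strategy.} By the Maz'ya--Cheeger--Buser--Ledoux equivalence recalled in the introduction it suffices to bound $D_{\Che}(\lambda_{K_E})$ below by $c\,D^{\Lin}_{\Che}(\lambda_{K_E})/\log(e+A^{(2)}\wedge n)$; and since $D^{\Lin}_{\Poin}(\lambda_{K_E})$ equals, up to a universal constant, $\sup_{\theta\neq 0}\norm{\scalar{x,\theta}-\E\scalar{x,\theta}}_{L^2(\lambda_{K_E})}/\norm{\theta}_2=\norm{\mathrm{Cov}(\lambda_{K_E})}_{\mathrm{op}}^{1/2}$, the target is a Poincar\'e inequality for $\lambda_{K_E}$ with constant $\lesssim\log(e+A^{(2)}\wedge n)\,\norm{\mathrm{Cov}(\lambda_{K_E})}_{\mathrm{op}}^{1/2}$. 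After a routine approximation one may assume each $V_i$ to be smooth and strictly convex, so that $\partial K_E=\set{\Phi=E}$ is a smooth hypersurface with outer normal along $\nabla\Phi=(V_i'(x_i))_i$. The approach I would take — following the authors' Riemannian‑metric / weighted‑manifold technique — is to equip $K_E$ with a diagonal metric $\sum_i\gamma_i(x_i)\,dx_i^{\,2}$, the one‑dimensional weights $\gamma_i$ being built from $V_i$ and the level $E$ so that (i) $\partial K_E$ is generalized‑mean‑convex for $(\gamma,\lambda_{K_E})$ and (ii) the associated Bakry--\'Emery--Ricci curvature is bounded from below; a generalized Reilly formula then produces a spectral‑gap lower bound for $\lambda_{K_E}$ measured in this metric. (Alternatively one could start from a localization/needle reduction to one dimension, but the model‑dependent input is the same.)

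\emph{Where $A^{(2)}$ and the logarithm enter; the main obstacle.} It remains to convert the metric spectral gap back into the Euclidean one and to compare it with $\norm{\mathrm{Cov}(\lambda_{K_E})}_{\mathrm{op}}$. Since $|\nabla_\gamma f|^2=\sum_i\gamma_i(x_i)^{-1}(\partial_i f)^2$, the loss between $\norm{\nabla_\gamma f}_{L^2(\lambda_{K_E})}$ and $\norm{\nabla f}_{L^2(\lambda_{K_E})}$ is governed by the typical size of the weights $\gamma_i$; the integration‑by‑parts identity $\int V_i'(y)\,y\,d\mu_i=1$ (which, incidentally, forces $\alpha^{(2)}_i\ge 1$ and hence $A^{(2)}\ge 1$) is what pins $\alpha^{(2)}_i=\norm{V_i'(y)\,y}_{L^2(\mu_i)}$ down as the correct $L^2(\mu_i)$‑scale of these weights. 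Passing from an $L^2$‑bound on the $\gamma_i$ to the $L^\infty$‑type bound required by a genuine Euclidean Poincar\'e inequality — by truncating the weights at a threshold chosen through a Cheeger / co‑area estimate — is exactly what costs the factor $\log(e+A^{(2)}\wedge n)$; this is the same mechanism behind the $\log(1+n)$ loss in Klartag's unconditional bound, and when the $V_i$ grow uncontrollably at infinity (so that $A^{(2)}=\infty$) one simply falls back on $A^{(2)}\wedge n\le n$. I expect the main difficulty to lie precisely in this conversion: choosing the weights $\gamma_i$ so that the curvature and mean‑convexity requirements (i)--(ii) hold \emph{simultaneously} with $\gamma_i$ bounded in $L^2(\mu_i)$ by a constant multiple of $\alpha^{(2)}_i$, and then extracting a logarithmic rather than a polynomial dependence on $n$. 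A compounding difficulty, absent from all the previously treated symmetric cases, is that the negative‑correlation inequality (\ref{eq:corr}) is unavailable here, so the odd parts of the $V_i$ must be absorbed directly — which is the ultimate reason the quantity $A^{(2)}$ surfaces at all.
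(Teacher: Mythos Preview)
Your treatments of $E_V\le n+1$ and of the volume bounds are correct, though different from the paper's: there both facts (and more, namely the estimate $e^{-E_V}\Vol(K_{E_V})\ge\frac{1}{e}\frac{n^ne^{-n}}{n!}$, which is the input actually needed downstream) are extracted from the concavity of $E\mapsto\Vol(K_E)^{1/n}$ via Lemma~\ref{lem:concave} and Proposition~\ref{prop:Fradelizi}.

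The spectral-gap part, however, is not a proof but a program, and it is not the paper's program. You propose a weighted diagonal metric and a Reilly-type argument in the spirit of \cite{KolesnikovEMilman-RiemannianMetrics}; but that earlier work treated \emph{unconditional} measures, and you offer no construction of weights $\gamma_i$ meeting your curvature and mean-convexity constraints without symmetry---indeed you say you ``expect the main difficulty to lie precisely in this conversion'' and stop there. The paper's route is entirely different and bypasses curvature altogether. It transfers \emph{concentration} in three steps: from the product measure $\mu$ to the measure $\mu_{K_E,w}$ obtained by linearizing the potential on the annulus $1-\tfrac{w_0}{\sqrt n}\le\norm{x}_{K_E}\le1$, via an $L^2(\mu)$ bound on $d\mu_{K_E,w}/d\mu$ (Proposition~\ref{prop:Lp}); then to the cone measure $\sigma_{\partial K_E}$ via a $W_1$ estimate (Lemma~\ref{lem:W1-estimate}); then to $\lambda_{K_E}$ via an $L^1$ Hardy inequality (Lemma~\ref{lem:Hardy}). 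The product structure is used \emph{only} in the first step, through a one-sided Hoeffding inequality for the independent centered variables $Y_i=V_i'(X_i)X_i-1$; this is where $A^{(2)}$ enters (as $\norm{Y_i}_{L^2}\le\alpha_i^{(2)}$), not through any metric weights. The resulting first-moment concentration for Lipschitz functions is then upgraded to a Poincar\'e inequality by the convexity-based equivalence of Theorem~\ref{thm:IsopConcEquiv}, and the $\log(e+n)$ alternative comes from replacing the $L^2$ density bound by a crude $L^\infty$ one (Lemma~\ref{lem:Linfty}) requiring no product structure at all.
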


In particular, we confirm the KLS conjecture for the generalized Orlicz ball $K_E$ as soon as $A^{(2)}$ is bounded above by a constant, reflecting a certain upper bound on the rate of growth of $\set{V_i}$ at infinity. The volume estimate (\ref{eq:intro-vol}) is a natural expected normalization which serves as a sanity check, preventing various trivial statements (such as when $E \rightarrow 0$). The precise result we formulate in Section \ref{sec:results} provides a more flexible explicit description of the levels $E$ to which the above result applies -- see Remark \ref{rem:Levels} below. This provides an explicitly computable criterion for the validity of the KLS conjecture, and significantly extends the class of convex bodies for which the conjecture is confirmed. 

\smallskip

Note that the dependence on $A^{(2)}$ in (\ref{eq:intro-result}) is logarithmic, and that in the worst case, regardless of the value of $A^{(2)}$ (which may be infinite, see e.g. \cite[Example 1]{Krugova-BadConvexFunctions}), 
the estimate (\ref{eq:intro-result}) confirms the KLS conjecture for $K_E$ up to a factor of  $\log(1+n)$,
 matching Klartag's estimate for unconditional convex bodies, but without assuming any symmetry. In fact, the above $\log(1+n)$ factor is a consequence of a more general result, stating that one may always find a level set of a \emph{genera}l log-concave measure $\mu$ (no product structure assumed) having essentially the same spectral-gap, up to this factor:

\begin{thm}[From log-concave measure to good level-set] \label{thm:intro-logn}
Let $\mu = \exp(-V(x)) dx$ denote a log-concave probability measure on $\Real^n$ with $\min V = 0$. Let $X$ denote a random-vector distributed according to $\mu$, and set:
\[
E_V := 1 + \E V(X) . 
\]
Then $E_V \leq n+1$, and for $E = E_V$, $K_E = \set{x \in \Real^n \; ;\; V(x) \leq E}$ satisfies (\ref{eq:intro-vol}) and:
\[
D_{\Poin}(\lambda_{K_E}) \leq C D_{\Poin}(\mu) \log(e + \sqrt{n} D_{\Poin}(\mu)) .
\]
\end{thm}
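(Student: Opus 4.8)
The statement splits into two normalization claims and the spectral comparison; the former are soft, the latter is the real content. \emph{Normalizations.} Translate so that $V$ attains its minimum $0$ at the origin; all quantities in the statement are translation invariant, so this is harmless. Convexity gives $\scalar{x,\nabla V(x)} \ge V(x) - V(0) \ge V(x)$ pointwise, while integrating $\div(x\, e^{-V(x)}) = 0$ over $\Real^n$ gives $\E_\mu \scalar{X,\nabla V(X)} = n$; hence $\E_\mu V(X) \le n$ and $E_V \le n+1$. For the volume estimate, $e^{-V} \le 1$ gives $\Vol(K_E) \ge \mu(K_E)$, and Markov's inequality applied to $V(X) \ge 0$ gives $\mu(K_E) = 1 - \mu(V > E_V) \ge 1 - \E_\mu V(X)/E_V = 1/E_V \ge 1/(n+1)$, so $\Vol(K_E)^{1/n} \ge (n+1)^{-1/n} \ge 1/e$. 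For the matching upper bound, put $m(s) := \Vol(\set{V \le s})$; Brunn--Minkowski together with $\min V = 0$ makes $s \mapsto m(s)/s^n$ non-increasing, and the layer-cake identity gives $\int_0^\infty e^{-s} m(s)\,ds = 1$. Restricting that integral to $[E_V/2, E_V]$ and using monotonicity of $m$ and of $m(s)/s^n$ together with $1 \le E_V \le n+1$ bounds $m(E_V)^{1/n} = \Vol(K_E)^{1/n}$ by a universal constant, which is (\ref{eq:intro-vol}).

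\emph{The spectral comparison.} It remains to prove the Poincar\'e bound. A pointwise density comparison between $\lambda_{K_E}$ and $\mu$ (or $\mu$ conditioned on $K_E$) is hopelessly lossy, since on $K_E$ the relative density varies by a factor $e^{E} \approx e^{n}$; the plan is instead to exploit the $V$-level structure. Both measures respect the filtration of $\Real^n$ by the nested convex level sets $K_s = \set{V \le s}$: the uniform measure $\lambda_{K_E}$ on the outer body and the measure $\mu$ itself differ only in how they weight the ``radial'' variable $s$ (densities proportional to $m'(s)$ on $[0,E]$ versus $e^{-s} m'(s)$ on $[0,\infty)$) and, correspondingly, in which level sets they sit on. Exploiting convexity of $V$ --- concretely, that the level-set correspondence along the gradient flow of $V$ is $1$-Lipschitz (monotonicity of $\nabla V$), together with the geometric sandwiching $K_s \subseteq K_{s'} \subseteq (s'/s) K_s$ for $s \le s'$ (a consequence of $V(0)=\min V = 0$) --- I would build a map that reparametrizes $s \mapsto \sigma(s)$ so as to turn the radial marginal of $\mu$ into that of $\lambda_{K_E}$ and slides points along $\nabla V$ between the corresponding level surfaces, thereby transporting $\mu$ onto $\lambda_{K_E}$. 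Feeding the Lipschitz constant $L$ of this map into the standard transference of Poincar\'e inequalities yields $D_{\Poin}(\lambda_{K_E}) \le L\, D_{\Poin}(\mu)$, so everything reduces to the bound $L \le C \log(e + \sqrt n\, D_{\Poin}(\mu))$. (Conditioning $\mu$ on $K_E$ first, using the known stability of the Poincar\'e constant under restriction of a log-concave measure to a convex set of measure $\gtrsim 1/n$, is an alternative route through which part or all of the logarithmic factor can enter.)

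\emph{Main obstacle.} The heart of the proof is the estimate on the distortion $L$. The radial reparametrization $\sigma$ is a strong contraction on the range of $s$ carrying most of the mass of $\mu$, but one must also control (i) the ``tangential'' distortion of the map between level surfaces $\set{V=s}$ and $\set{V=\sigma(s)}$ of genuinely different shape --- the sandwiching only bounds their ratio of dilation, not their shape --- and (ii) any expansion occurring where $\sigma(s) > s$, which forces flowing backward along $\nabla V$ and can be severe in the flat regions of $V$. Quantifying these is precisely where the size of $\mu$, measured by $\sqrt n\, D_{\Poin}(\mu) \ge \sqrt{\E_\mu \abs{X - \E_\mu X}^2}$, must be reconciled with the unit scale of $K_E$ forced by (\ref{eq:intro-vol}), and bridging that ratio by a multiscale argument is what produces the logarithmic factor; that this factor cannot be removed in general is consistent with the $\log(1+n)$ worst case recorded after Theorem~\ref{thm:intro}.
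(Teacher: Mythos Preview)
Your normalization arguments are fine in spirit (and close to the paper's), though the paper organizes the volume estimate differently: it shows that $E \mapsto e^{-E}\Vol(K_E)$ is log-concave with integral $1$, locates its maximizer at some $t_g \le n$ with maximum $\ge n^n e^{-n}/n!$, and then uses the one-dimensional Fradelizi inequality to place $E_V$ within one unit of $t_g$. Your layer-cake argument for the upper bound is a bit sketchy (the claim that $m(s)/s^n$ is non-increasing needs $g(s)=\Vol(K_s)^{1/n}$ concave with $g(0)\ge 0$, which is fine), but it can be made to work.

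The spectral comparison, however, has a genuine gap. Your plan is to push $\mu$ forward to $\lambda_{K_E}$ by a map built from the gradient flow of $V$ together with a radial reparametrization $s\mapsto\sigma(s)$, and then bound its Lipschitz constant. The assertion that ``the level-set correspondence along the gradient flow of $V$ is $1$-Lipschitz (monotonicity of $\nabla V$)'' is not justified: monotonicity of $\nabla V$ makes the \emph{time}-$t$ flow a contraction, but the map ``flow from $\{V=s\}$ to $\{V=\sigma(s)\}$'' is parametrized by level, not time, and its differential involves $\nabla V/|\nabla V|^2$ and $\Hess V$ in a way that admits no uniform bound for general convex $V$. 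You correctly name the tangential distortion as the main obstacle, but you do not resolve it; without that, there is no Lipschitz bound and hence no transference of the Poincar\'e inequality. The parenthetical route through $\mu|_{K_E}$ has the same defect: after conditioning you still face the density ratio $e^{V}/\Vol(K_E)$, which varies by $e^{E}\approx e^n$ on $K_E$.

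The paper's proof avoids any global transport map. The key idea is to interpose a \emph{linearized} measure $\mu_{K_E,w}$ supported on the thin annulus $\{\,1-w\le \|x\|_{K_E}\le 1\,\}$ with density proportional to $\exp(-n\|x\|_{K_E})$. On that annulus the potential of $\mu$ differs from $E+n(\|x\|_{K_E}-1)$ by at most $nw$, so the $L^\infty$ density ratio $d\mu_{K_E,w}/d\mu$ is bounded by $C\,e^{w_0\sqrt n}/(w_0 Z_E)$ when $w=w_0/\sqrt n$; this costs only $e^{w_0\sqrt n}$ rather than the fatal $e^{E}$. Concentration is then transferred from $\mu$ to $\mu_{K_E,w}$ by the Barthe--Milman $L^\infty$ comparison, from $\mu_{K_E,w}$ to the cone measure $\sigma_{\partial K_E}$ by a trivial $W_1$ estimate (the annulus has width $w$), and from $\sigma_{\partial K_E}$ to $\lambda_{K_E}$ by an $L^1$ Hardy-type inequality with boundary term. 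Choosing $w_0=1/\sqrt n$ and invoking the equivalence of first-moment concentration and Poincar\'e for log-concave measures produces exactly the $\log(e+\sqrt n\,D_{\Poin}(\mu))$ factor. The point is that all comparisons are made either on a thin shell (where densities are close) or in $W_1$ (where shape discrepancy is irrelevant), so the tangential-distortion obstacle you identified simply does not arise.
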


\begin{rem} \label{rem:Levels}
The results of Theorems \ref{thm:intro} and \ref{thm:intro-logn} apply to all levels $E$ in the following explicit set:
\begin{equation} \label{eq:intro-intro-level-def}
\text{Level}(V) := \set{E \geq 0 \; ; \; e^{-E} \Vol(K_E) \geq \frac{1}{e} \frac{n^n e^{-n}}{n!}} .
\end{equation}
Proposition \ref{prop:Level} ensures that this set is a non-empty interval $[E_{\min},E_{\max}]$ with $E_{\min} \leq n$ and $1 \leq E_{\max} - E_{\min} \leq e \sqrt{2 \pi n} (1 + o(1))$ as $n \rightarrow \infty$, and that $E_V \in \text{Level}(V)$. The constant $\frac{1}{e}$ in front of the term $\frac{n^n e^{-n}}{n!}$ above may be replaced by $\frac{1}{e^q}$ for any fixed $q \geq 1$ (and for some statements in this work, $q \geq 0$), resulting only in different numeric constants in our results; this variant of (\ref{eq:intro-intro-level-def}) is denoted by $\text{Level}^{(q)}(V)$. If one employs $q = n / (E_V-1)$, it is not hard to show that the (perhaps more natural) level $E_V - 1$ lies in $\text{Level}^{(q)}(V)$ and thus our results apply to it as well -- but we do not pursue this nuance here. 
\end{rem}

More general versions of these results (dispensing with the restrictions that $\min V_i = 0$, that $\mu_i$ are probability measures, and that their barycenter is at the origin) will be presented in Section \ref{sec:results} and Subsection \ref{subsec:gen}. In this introductory section, we provide a couple of simple examples to illustrate how these (extended) results may be applied; their analysis is deferred to Subsection \ref{subsec:examples}.
We denote by $a_+ := (\abs{a}+a)/2$ and $a_- := (\abs{a} - a)/2$ the positive and negative parts of $a \in \Real$.  

\begin{example} \label{ex:1}
Let $p^{\pm}_i \in [1,P]$, $i=1,\ldots,n$, for some $P \geq 1$, and set:
\[
V_i(x_i) := (x_i)_+^{p^+_i} + (x_i)_-^{p^-_i} . 
\]
Let $E_V$ be defined by (\ref{eq:EV-XI}), where $X_i$ are distributed according to $\mu_i$ having density proportional to $\exp(-V_i)$. 
Then for $E = E_V$, the generalized Orlicz ball (\ref{eq:KE}) satisfies (\ref{eq:intro-vol}) and:
\begin{equation} \label{eq:ex1-1}
D_{\Poin}(\lambda_{K_E}) \leq C \log(e+ P) D_{\Poin}^{\Lin}(\lambda_{K_E}) ,
\end{equation}
for some universal $C > 0$. Moreover, if $p^{\pm}_i \in [2,P]$, then:
\[
D_{\Poin}(\lambda_{K_E}) \leq C' \sqrt{\log(e+ P)} D_{\Poin}^{\Lin}(\lambda_{K_E}) .
\]
In particular, for fixed $P \geq 1$, this confirms the KLS conjecture for the bodies $K_E$ uniformly in $n \geq 1$.
Of course, one may replace the function $y^p$ in this example with other non-homogeneous variations like $y^p \log(1+y)$, etc... 
More generally, as suggested to us by the referee, it is worth pointing out that (\ref{eq:ex1-1}) remains valid (with $C$ depending solely on $c_1,c_2$ below) when the convex functions $V_i$ satisfy $\min V_i = V_i(0) = 0$,
\[
\forall i=1,\ldots,n  \;\;\; 0 < c_1 \leq \int_0^\infty \exp(-V_i(\pm x_i)) dx_i \leq c_2  < \infty , 
\]
and the following ``generalized doubling condition" holds:
\[ 
\forall i=1,\ldots,n \;\; \; \exists \eps_i > 0 \;\; \; \forall x_i \in \Real \;\;\;  V_i((1+\eps_i)x_i) \leq (1+\eps_i P) V_i(x_i)  .
\]
\end{example}

\begin{example} \label{ex:2}
Let $\mu = \exp(-V(x)) dx$ denote a log-concave probability measure on $\Real^n$ with $\min V = 0$ and $\text{Hess} V \geq \rho^2 \text{Id}$ with $\rho > 0$.
Then for $E = E_V \leq n+1$, $K_E = \set{x \in \Real^n \; ;\; V(x) \leq E}$ satisfies (\ref{eq:intro-vol}) and:
\[
D_{\Poin}(\lambda_{K_E}) \leq \frac{C}{\rho} \log(e + \sqrt{n} / \rho) .
\]
\end{example}

\subsection{Method of Proof}

Our approach is based on transferring concentration information from the log-concave measure $\mu := \exp(-V(x)) dx$ onto the uniform measure $\lambda_{K_E}$ on its level set $K_E$. We achieve this in three separate steps. The most important step is to transfer concentration from $\mu$ to an auxiliary measure $\mu_{K_E,w}$, which is a linearized version of $\mu$ supported on an annulus around $K_E$ of relative width $w = \frac{w_0}{\sqrt{n}}$. We then pass from $\mu_{K_E,w}$ to the cone measure $\sigma_{\partial K_E}$ supported on $\partial K_E$, from which we finally pass to $\lambda_{K_E}$ and optimize on $w_0 > 0$. Finally, an isoperimetric (or Poincar\'e) inequality is deduced using the convexity of $K_E$ and the known equivalence between concentration and isoperimetry under convexity assumptions. Surprisingly, these three different steps require three different methods for transferring concentration: an $L^p(\mu)$ estimate on $d\mu_{K_E,w}/d\mu$, a Wasserstein-distance estimate on $W_1(\mu_{K_E,w},\sigma_{\partial K_E})$, and a Hardy-type inequality with-boundary for $\lambda_{K_E}$. The only place where we need to assume that $\mu$ is a product measure (i.e. that $V(x) = \sum_{i=1}^n V_i(x_i)$ and hence $K_E$ is a generalized Orlicz ball) is in the first step, resulting in an estimate depending only on $A^{(2)}$ and not the dimension; this allows for future possible generalizations. To get the dimension-dependent $\log(1+n)$ estimate, no assumption on $\mu$ is needed beyond log-concavity, and we can simply use an $L^\infty$ estimate in the first step. 

\smallskip

The rest of this work is organized as follows. In Section \ref{sec:results} we formulate our various general main results in this work, of which Theorem \ref{thm:intro} is a particular case. In Section \ref{sec:Lp} we obtain the $L^p(\mu)$ estimate on $d\mu_{K_E,w}/d\mu$, modulo an estimate on $\Vol(K_E)$ which is obtained in Section \ref{sec:ZE}. In Section \ref{sec:W1} we obtain the Wasserstein distance estimate and the Hardy-type inequality. In Section \ref{sec:proofs} we put everything together and prove our main results. 

\smallskip

Further results pertaining to the distribution of $\Vol(K_{E})$ where $E = \sum_{i=1}^n V_i(X_i)$ and $X_i$ are independent random-variables distributed according to $\mu_i = \exp(-V_i) dx$, will be studied in a follow-up work by Barthe and Wolff \cite{BartheWolffOrliczBalls}. 

\smallskip
\textbf{Acknowledgement.} We thank Franck Barthe for informing us, after learning about a prior version of our results, that by employing a local Central-Limit Theorem, one can show that under mild assumptions on the functions $V_i$ and for large enough $n$ (depending on properties of $\{V_i\}$), the natural level $E_V - 1$ lies in $\text{Level}(V)$; this led us to notice that our proof actually shows that  $E_V \in \text{Level}(V)$ for any log-concave probability measure in $\Real^n$ and for any $n$. We also thank the anonymous referee for carefully and thoroughly reading the manuscript, for the constructive remarks which helped improve the presentation of the results, and for suggesting the more general setting in Example \ref{ex:1}.

\section{Statement of Results} \label{sec:results}

\begin{thm}[Main Technical Theorem] \label{thm:main}
Let $V_i : \Real \rightarrow \Real$, $i=1,\ldots,n$, denote a sequence of convex functions normalized so that $\mu_i := \exp(-V_i(y)) dy$ is a probability measure on $\Real$. 
Denote $V(x) = \sum_{i=1}^n V_i(x_i)$ and $m_i := \min V_i$, and assume that $\sum_{i=1}^n m_i = 0$ so that $\min V = 0$. 
Assume in addition that the following scale-invariant quantities are finite:
\begin{equation} \label{eq:main-psi1}
\forall i=1,\ldots,n \;\;\;  \alpha^{(\infty,2)}_i := (1+\norm{(V_i'(y) y)_-}_{L^\infty(\mu_i)}) \vee \norm{V_i'(y) y}_{L^2(\mu_i)} < \infty . 
\end{equation}
Set $A^{(\infty,2)} := \frac{1}{\sqrt{n}} \snorm{(\alpha^{(\infty,2)}_i)_{i=1}^n}_2$ and $M = \max_{i=1,\ldots,n} e^{m_i}$. 
Given $E > E_0 := V(0)$, define the following convex body on $\Real^n$ (containing the origin in its interior):
\[
K_E := \set{V \leq E} = \set{ x \in \Real^n \; ; \; \sum_{i=1}^n V_i(x_i) \leq E} ,
\]
the unit-ball of the generalized Orlicz norm $\norm{x}_{K_E} := \inf \set{ t > 0 \; ; \; \sum_{i=1}^n V_i(x_i/t) \leq E }$. 
Denote by $b_E := \int x \; d\lambda_{K_E}(x)$ the barycenter of $K_E$. Consider the set:
\begin{equation} \label{eq:intro-level-def}
\text{Level}(V) := \set{E \geq 0 \; ; \; e^{-E} \Vol(K_E) \geq \frac{1}{e} \frac{n^n e^{-n}}{n!}} .
\end{equation}
Then for all $ E \in \text{Level(V)} \cap (E_0 ,\infty)$:
\begin{align*}
 D_{\Poin}(\lambda_{K_E}) & \leq C \brac{M \log(e+ A^{(\infty,2)} M) + \frac{1}{\sqrt{n}} \int \abs{x} d\lambda_{K_E}(x)} \\
 & \leq C \brac{M \log(e+ A^{(\infty,2)} M) + \frac{\abs{b_E}}{\sqrt{n}} + D_{\Poin}^{\Lin}(\lambda_{K_E})} ,
\end{align*}
for an appropriate universal numeric constant $C > 1$.
\end{thm}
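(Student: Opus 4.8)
The plan is to first reduce the assertion to a \emph{first-moment concentration} estimate for $\lambda_{K_E}$, and then to obtain that estimate by transporting the concentration of the log-concave product measure $\mu := \exp(-V(x))\,dx = \bigotimes_{i=1}^n \mu_i$ onto $\lambda_{K_E}$. The second inequality in the statement is the routine part: by the triangle inequality in $L^2(\lambda_{K_E})$,
\[
\frac{1}{\sqrt n}\int \abs{x}\,d\lambda_{K_E} \le \frac{\abs{b_E}}{\sqrt n} + \frac{1}{\sqrt n}\brac{\int\abs{x-b_E}^2\,d\lambda_{K_E}}^{1/2} = \frac{\abs{b_E}}{\sqrt n} + \sqrt{\tfrac1n\sum_{i=1}^n \Var_{\lambda_{K_E}}(x_i)} ,
\]
and $\sqrt{\tfrac1n\sum_i \Var_{\lambda_{K_E}}(x_i)} \le \max_i \sqrt{\Var_{\lambda_{K_E}}(x_i)} \le D_{\Poin}^{\Lin}(\lambda_{K_E})$, since $D_{\Poin}^{\Lin}(\mu) = \sup_{\abs{u}=1}\sqrt{\Var_\mu(\scalar{x,u})}$. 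So all the difficulty is in the first inequality, and in turn—by E.~Milman's equivalence between concentration and isoperimetry for uniform measures on convex bodies \cite{EMilman-RoleOfConvexity}—it suffices to bound a suitable concentration profile of the convex body $K_E$ (e.g.\ the first moment of the expansion of half-measure sets) by $C\brac{M\log(e+A^{(\infty,2)}M) + \frac{1}{\sqrt n}\int\abs{x}\,d\lambda_{K_E}}$.

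To obtain this I would transport concentration along the chain $\mu \to \mu_{K_E,w} \to \sigma_{\partial K_E} \to \lambda_{K_E}$, keeping the relative annulus width $w = w_0/\sqrt n$ free. \emph{Step~1, which I expect to be the main obstacle}, passes from $\mu$ to the linearized measure $\mu_{K_E,w}$ on the thin annulus $(1+w)K_E\setminus K_E$, whose Lebesgue density there is proportional to a single exponential weight in the radial ($\norm{\cdot}_{K_E}$) direction. The heart of it is an $L^p(\mu)$ bound on $d\mu_{K_E,w}/d\mu$: because $\mu$ is a product, this Radon--Nikodym derivative tensorizes, and its $L^p$ norm is controlled in terms of the one-dimensional quantities $\alpha^{(\infty,2)}_i$ and $m_i$ only—hence in terms of $A^{(\infty,2)}$ and $M$ with no explicit dimension dependence; this is the sole place the product structure of $K_E$ is used, and it is why the crude $L^\infty$ variant of the same step still yields the general $\log(1+n)$ bound for arbitrary log-concave $\mu$. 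The hypothesis $E\in\text{Level}(V)$ enters exactly here: it forces $\mu(K_E) \ge e^{-E}\Vol(K_E) \ge \frac1e\frac{n^ne^{-n}}{n!}\asymp n^{-1/2}$, together with (via $\int e^{-V}=1$ and $V\ge 0$) the normalization $\Vol(K_E)^{1/n}\asymp 1$, so $\mu$ does not underweight the region carrying $\mu_{K_E,w}$ and the passage from an $L^p$ density bound to transferred concentration costs only $\log(\text{poly}(n))$. Since the concentration of $\mu$ in the relevant radial sense (at absolute scale $\asymp w_0$ after normalization) is governed by the fluctuations of $V(X)=\sum_i V_i(X_i)$ about its mean $\asymp n$, choosing $p\asymp \log(e+A^{(\infty,2)}M)$ transfers this to concentration of $\mu_{K_E,w}$ with multiplicative loss $M\log(e+A^{(\infty,2)}M)$. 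The delicate points will be defining $\mu_{K_E,w}$ so that $d\mu_{K_E,w}/d\mu$ is genuinely tensorizable, and extracting the correct powers of $w_0$ and $p$.

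\emph{Step~2} passes from $\mu_{K_E,w}$ to the cone measure $\sigma_{\partial K_E}$ on $\partial K_E$: since the annulus has radial width of order $w_0/\sqrt n$ times a radius that is $O(1)$, radially projecting $\mu_{K_E,w}$ onto $\partial K_E$ gives $\sigma_{\partial K_E}$ up to a $1+O(w_0)$ density distortion while moving each point a distance $O(\tfrac{w_0}{\sqrt n}\abs{x})$, whence $W_1(\mu_{K_E,w},\sigma_{\partial K_E}) \lesssim \tfrac{w_0}{\sqrt n}\int\abs{x}\,d\sigma_{\partial K_E}$; and $\abs{\int f\,d\mu_{K_E,w}-\int f\,d\sigma_{\partial K_E}} \le \snorm{f}_{\mathrm{Lip}}\,W_1(\mu_{K_E,w},\sigma_{\partial K_E})$ transfers concentration of Lipschitz functions with this additive error. \emph{Step~3} passes from $\sigma_{\partial K_E}$ to $\lambda_{K_E}$ through a Hardy-type inequality ``with boundary'' in the spirit of \cite{KolesnikovEMilman-HardyKLS}: writing $x=r\theta$ with $\theta\in\partial K_E$, $r\in[0,1]$, one splits $\Var_{\lambda_{K_E}}(f)$ into the $\sigma_{\partial K_E}$-average of the one-dimensional variance of $r\mapsto f(r\theta)$ against the weight $r^{n-1}\,dr$—controlled by the classical one-dimensional Muckenhoupt/Hardy criterion, whose constant rescales to $O\!\brac{\tfrac{1}{\sqrt n}\int\abs{x}\,d\lambda_{K_E}}$—plus the variance over $\sigma_{\partial K_E}$ of the radial average, controlled by the concentration of $\sigma_{\partial K_E}$ from Steps~1--2 (after converting between integration against $\sigma_{\partial K_E}$ and against $\lambda_{K_E}$ by polar integration). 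Assembling the three steps gives, for every $w_0>0$, a concentration bound for $\lambda_{K_E}$ of order $w_0^{-1}M\log(e+A^{(\infty,2)}M) + w_0\cdot\tfrac{1}{\sqrt n}\int\abs{x}\,d\lambda_{K_E}$; taking $w_0$ of order one and invoking E.~Milman's convexity equivalence once more to upgrade concentration to a Poincar\'e inequality yields the claimed bound on $D_{\Poin}(\lambda_{K_E})$ with a universal constant.
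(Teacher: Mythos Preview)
Your three-step architecture $\mu \to \mu_{K_E,w} \to \sigma_{\partial K_E} \to \lambda_{K_E}$, the use of E.~Milman's concentration-to-Poincar\'e equivalence, and the handling of the second inequality all match the paper. Steps~2 and~3 are essentially correct (the paper uses an $L^1$ Hardy-type inequality rather than your $L^2$ variance split, but either works).

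The genuine gap is in Step~1. The Radon--Nikodym derivative does \emph{not} tensorize: on the annulus,
\[
\frac{d\mu_{K_E,w}}{d\mu}(x) \;=\; \frac{1}{Z_{E,w}}\exp\!\bigl(V(x) - E - n(\norm{x}_{K_E}-1)\bigr),
\]
and $\norm{x}_{K_E}$ is not separable in the coordinates (unless $K_E$ is a box). So your stated mechanism cannot be carried out. What the paper actually does is: use convexity of $V$ to bound, for $t = \norm{x}_{K_E}-1$,
\[
V(x) - E - nt \;=\; V(x) - V\!\brac{\tfrac{x}{1+t}} - nt \;\le\; \tfrac{t}{1+t}\scalar{\nabla V(x),x} - nt,
\]
and then exploit the product structure through the decomposition $\scalar{\nabla V(X),X} - n = \sum_{i=1}^n \bigl(V_i'(X_i)X_i - 1\bigr)$ into independent mean-zero summands, to which a one-sided Hoeffding inequality applies. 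This is precisely where the quantities $\alpha^{(\infty,2)}_i$ arise (as $\norm{(Y_i)_-}_{L^\infty}\vee\norm{Y_i}_{L^2}$ for $Y_i = V_i'(X_i)X_i - 1$), and it yields an $L^p(\mu)$ bound on the density of the form $\frac{C}{Z_E^p w_0^p}\exp(C p^2 w_0^2 (A^{(\infty,2)})^2)$. The paper then uses $p=2$ fixed and optimizes $w_0 \asymp 1/A^{(\infty,2)}$, rather than optimizing $p$ as you suggest.

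A second, smaller point: you place the annulus on the \emph{outside}, $(1+w)K_E\setminus K_E$. The paper deliberately places it on the \emph{inside}, $\{1-w\le\norm{x}_{K_E}\le 1\}$, so that $V\le E$ holds trivially there and only a one-sided deviation of $\sum_i Y_i$ is needed; this is what allows the hypothesis on $\alpha_i^{(\infty,2)}$ to be merely an $L^2$ moment plus a one-sided $L^\infty$ bound rather than full exponential integrability. With your outer annulus you would need a two-sided estimate and hence stronger assumptions.

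Finally, the role of $E\in\text{Level}(V)$ is not to lower-bound $\mu(K_E)$ per se, but to ensure $Z_E \ge 1/e$ (a constant, not $n^{-1/2}$), so that the $1/Z_E^p$ factor above is harmless.
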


Since $D_{\Poin}(\lambda_{K_E})$ and $D_{\Poin}^{\Lin}(\lambda_{K_E})$ remain invariant under translation of $K_E$, by translating $V \mapsto V(\cdot + b)$ (i.e. $V_i \mapsto V_i(\cdot + b_i)$ for $b \in \Real^n$), we immediately obtain:

\begin{cor} \label{cor:main}
With the same notation and assumptions as in Theorem \ref{thm:main}, given $b \in \Real^n$ denote $A^{(\infty,2)}(b) := \frac{1}{\sqrt{n}} \snorm{(\alpha^{(\infty,2)}_i(b))_{i=1}^n}_2$, where:
 \begin{equation} \label{eq:alpha-infty}
\alpha^{(\infty,2)}_i(b) := (1+\norm{(V_i'(y) (y - b_i))_-}_{L^\infty(\mu_i)}) \vee \norm{V_i'(y) (y - b_i)}_{L^2(\mu_i)} . 
\end{equation}
Then for all $E \in \text{Level}(V)$ and $b \in \text{int}(K_E)$:
\[
 D_{\Poin}(\lambda_{K_E}) \leq C \brac{M \log(e+ A^{(\infty,2)}(b) M) + \frac{\abs{b_E-b}}{\sqrt{n}} + D_{\Poin}^{\Lin}(\lambda_{K_E})} .
 \]
\end{cor}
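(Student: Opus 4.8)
The plan is to reduce Corollary \ref{cor:main} to Theorem \ref{thm:main} via the translation $V_i \mapsto V_i(\cdot + b_i)$ anticipated in the paragraph preceding the statement. Fix $b \in \Real^n$ and $E \in \text{Level}(V)$ with $b \in \text{int}(K_E)$. Two trivial reductions come first. If $A^{(\infty,2)}(b) = \infty$, the right-hand side of the asserted inequality is infinite and there is nothing to prove, so we may assume every $\alpha^{(\infty,2)}_i(b)$ is finite. And if $E = 0$, then $K_E = \set{V = 0} = \prod_{i=1}^n \set{V_i = m_i}$ is a box (non-degenerate, as $\text{int}(K_E) \neq \emptyset$), for which $D_{\Poin}(\lambda_{K_E}) \leq C D_{\Poin}^{\Lin}(\lambda_{K_E})$ is immediate: by the tensorization of the Poincar\'e constant, $D_{\Poin}(\lambda_{K_E})$ equals the one-dimensional Poincar\'e constant of the longest edge, which is controlled by testing $D_{\Poin}^{\Lin}$ on the corresponding coordinate functional. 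So we may also assume $E > 0 = \min V$.

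Set $\tilde V_i(y) := V_i(y + b_i)$, $\tilde V(x) := \sum_{i=1}^n \tilde V_i(x_i) = V(x + b)$, and $\tilde \mu_i := \exp(-\tilde V_i(y))\, dy$; the next step is to verify that $\set{\tilde V_i}$ meets all the hypotheses of Theorem \ref{thm:main} at the level $E$. Each $\tilde V_i$ is convex; $\tilde \mu_i$ is a probability measure by translation invariance of Lebesgue measure; $\min \tilde V_i = \min V_i = m_i$, so $\sum_i \min \tilde V_i = 0$ and the constant $M = \max_i e^{m_i}$ is unchanged. The corresponding level set is $\tilde K_E := \set{\tilde V \leq E} = K_E - b$, hence $\Vol(\tilde K_E) = \Vol(K_E)$ and therefore $\text{Level}(\tilde V) = \text{Level}(V) \ni E$; moreover $\tilde V(0) = V(b) < E$, because $b \in \text{int}(K_E) \subseteq \set{V < E}$ — the inclusion being the standard fact that the interior of a convex body $\set{V \leq E}$ with $E > \min V$ coincides with $\set{V < E}$. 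Finally, $\tilde \mu_i$ is the pushforward of $\mu_i$ under $z \mapsto z - b_i$, and along this substitution $\tilde V_i'(y)\, y = V_i'(z)(z - b_i)$; consequently the quantity $\alpha^{(\infty,2)}_i$ attached to $\tilde V_i$ is precisely $\alpha^{(\infty,2)}_i(b)$ of (\ref{eq:alpha-infty}) — in particular finite — and so $\tilde A^{(\infty,2)} = A^{(\infty,2)}(b)$.

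The last step is to record how the three terms in the conclusion of Theorem \ref{thm:main} behave under the translation. Since $\tilde K_E = K_E - b$, the barycenter of $\tilde K_E$ is $b_E - b$ and hence has norm $\abs{b_E - b}$; and since $\lambda_{\tilde K_E}$ is the pushforward of $\lambda_{K_E}$ under $x \mapsto x - b$, while the variance of a Lipschitz (resp.\ linear) function and the norm of its gradient are unaffected by shifting the argument by a constant, we get $D_{\Poin}(\lambda_{\tilde K_E}) = D_{\Poin}(\lambda_{K_E})$ and $D_{\Poin}^{\Lin}(\lambda_{\tilde K_E}) = D_{\Poin}^{\Lin}(\lambda_{K_E})$. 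Plugging these identifications into the estimate that Theorem \ref{thm:main} delivers for $\set{\tilde V_i}$ at the level $E$ produces exactly
\[
D_{\Poin}(\lambda_{K_E}) \leq C \brac{ M \log(e + A^{(\infty,2)}(b) M) + \frac{\abs{b_E - b}}{\sqrt{n}} + D_{\Poin}^{\Lin}(\lambda_{K_E}) } ,
\]
with the same universal constant $C$. There is no substantive obstacle here — the argument is pure bookkeeping — and the only points that genuinely require attention are the persistence of the normalization $\sum_i \min \tilde V_i = 0$ and of the strict inequality $\tilde V(0) < E$ under the translation, and the identity $\tilde A^{(\infty,2)} = A^{(\infty,2)}(b)$; nothing beyond Theorem \ref{thm:main} is used.
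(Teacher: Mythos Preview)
Your proof is correct and follows precisely the approach the paper indicates in the paragraph preceding the corollary: translate $V_i \mapsto V_i(\cdot + b_i)$, observe that all the normalizations and the set $\text{Level}(V)$ are preserved, that $\alpha^{(\infty,2)}_i$ computed for the translated potentials is exactly $\alpha^{(\infty,2)}_i(b)$, and that $D_{\Poin}$, $D_{\Poin}^{\Lin}$ are translation invariant while the barycenter becomes $b_E - b$. Your write-up is in fact more careful than the paper's one-line justification --- in particular your separate handling of the degenerate case $E=0$ (where $\text{int}(K_E) \subset \{V < E\}$ can fail) via the box argument is a nice touch that the paper omits.
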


There is a particular value of $b \in \Real^n$ which is the most natural to use above -- the barycenter of $\mu := \exp(-V(x)) dx$, denoted:
\[
b_\mu := \int x \; d\mu(x) = \brac{\int x_i d\mu_i(x_i) }_{i=1}^n .
\]
With this choice, it is immediate to verify that $\set{\alpha_i(b_\mu)}$ are both scale and translation invariant in $\mu$. Another advantage we will verify in Lemma \ref{lem:Fradelizi2} is that when $b = b_\mu$, the $L^\infty$ term in (\ref{eq:alpha-infty}) is always majorized by the $L^2$ one; this is a generalization of the simple fact that $V_i'(y)(y - b_i) \geq 0$ whenever the minimum of the convex $V_i$ is attained at $b_i$. We consequently denote:
\[
A^{(2)}(b) := \frac{1}{\sqrt{n}} \snorm{(\alpha^{(2)}_i(b))_{i=1}^n}_2  ~,~  \alpha^{(2)}_i(b)  := \norm{V_i'(y) (y - b_i)}_{L^2(\mu_i)}  . 
\]

Let:
\[
\text{Cov}_\mu := \E((X_\mu -b_\mu) \otimes (X_\mu-b_\mu))  ~,~ \text{Cov}_E := \E((X_E - b_E) \otimes (X_E - b_E)) ,
\]
denote the corresponding covariance matrices, where $X_\mu$ and $X_E$ are distributed according to $\mu$ and $\lambda_{K_E}$, respectively. To provide some more relevant information regarding the subset $\text{Level}(V)$ of good levels $E$ and the associated level sets $K_E$, to which the above results apply, we have the following:
\begin{prop} \label{prop:Level}
Let $\mu = \exp(-V(x)) dx$ denote a log-concave probability measure on $\Real^n$ so that $\min V = 0$. For $E \geq 0$ let $K_E := \set{V \leq E}$, and let $\text{Level}(V)$ be defined by (\ref{eq:intro-level-def}). Let $b_\mu,b_E,\text{Cov}_\mu,\text{Cov}_E$ be defined as above. There exist numeric constants $c , C , C' > 0$ so that:
\begin{enumerate}
\item $\text{Level}(V)$ is a non-empty closed interval $[E_{\min},E_{\max}]$ with $E_{\min} \leq n$.
\item $1 \leq E_{\max} - E_{\min} \leq e \frac{n! e^n}{n^n} = e \sqrt{2 \pi n} (1 + o(1))$ as $n \rightarrow \infty$. 
\item $1+o(1) \leq \Vol(K_{E_{\min}})^{\frac{1}{n}} \leq \Vol(K_{E_{\max}})^{\frac{1}{n}} \leq e (1 + o(1))$ as $n \rightarrow \infty$.
\item $E_V := 1 + \int V(x) e^{-V(x)} dx$ satisfies $E_V \in \text{Level}(V)$ and $E_V \leq E_{\max} \wedge (n+1)$. 
\item \label{assertion:Fradelizi} $V(b_\mu) \leq E_V - 1 \leq (E_{\max} -1) \wedge n$, i.e. $b_\mu \in K_{E_V - 1} \subset K_{(E_{\max} -1) \wedge n}$. 
\item $\abs{b_E - b_\mu} \leq C \log (1+n) D_{\Poin}^{\Lin}(\mu)$, for all $E \in [E_{\min},E_{\max}]$.
\item $\text{Cov}_E \leq C' \log^2(1+n) \text{Cov}_\mu$ as positive-definite matrices, for all $E \in [E_{\min},E_{\max}]$.
\item $D_{\Poin}^{\Lin}(\lambda_{K_E}) \geq c > 0$ for all $E \geq E_{\min}$.
\end{enumerate}
\end{prop}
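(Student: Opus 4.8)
The plan is to translate every assertion into a one‑dimensional statement about the function
\[
\phi(E) \ := \ e^{-E}\,\Vol(K_E), \qquad E\ge 0,
\]
where $Z(E):=\Vol(K_E)$ and $h:=Z^{1/n}$. After a harmless translation (none of the quantities in the statement is affected) we may assume $0$ is a minimizer of $V$. Then convexity of $V$ plus Brunn--Minkowski give: $h$ is concave, non-decreasing, with $h(0)\ge 0$; $tK_E\subseteq K_{tE}$ for $t\in[0,1]$, hence $Z(E')\ge (E'/E)^n Z(E)$ for $E'\le E$, equivalently $(\log h)'\le 1/E$; and $K_E$ is a genuine convex body with $Z(E)<\infty$ (since $\mu$ is a probability measure). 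Since $\log h$ is a concave non-decreasing function composed with a concave function, $\log\phi(E)=-E+n\log h(E)$ is concave, so $\phi$ is a continuous log-concave function on $[0,\infty)$ with $(\log\phi)'\ge -1$ and $\phi\to0$ at $\infty$; and a direct Stieltjes computation (push Lebesgue forward by $V$, use $d(\mu(K_{(\cdot)}))=e^{-E}dZ(E)$, integrate by parts) shows $\int_0^\infty\phi=1$ and $\int_0^\infty E\,\phi(E)\,dE=E_V$. Finally, the maximum point $E^*$ of $\phi$ satisfies $E^*\le n$: the optimality condition is $\tfrac1n\in\partial(\log h)(E^*)$, so if $E^*>n$ then for $E\in(n,E^*)$ one would get $(\log h)'(E)\ge 1/n>1/E$, contradicting $(\log h)'\le 1/E$.

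The heart of the proof is the sharp bound $\norm{\phi}_\infty=\phi(E^*)\ge \tfrac{n^ne^{-n}}{n!}$, which I would obtain by a tangent‑line argument. Choosing $p^*:=h(E^*)/n\in\partial h(E^*)$ (the mode condition is exactly $h(E^*)/n\in\partial h(E^*)$), concavity gives $h(E)\le h(E^*)+p^*(E-E^*)=p^*(n-E^*+E)$ for all $E\ge0$; writing $a:=n-E^*\ge0$ and integrating,
\[
1=\int_0^\infty e^{-E}h(E)^n\,dE\ \le\ (p^*)^n e^{a}\int_a^\infty e^{-u}u^n\,du=(p^*)^n e^{a}\,\Gamma(n+1,a)\ \le\ (p^*)^n e^{a}\,n!,
\]
hence $\phi(E^*)=e^{-E^*}(np^*)^n\ge n^n e^{-E^*}/(e^{a}n!)=n^ne^{-n}/n!$ using $E^*+a=n$. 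With $c:=\tfrac1e\tfrac{n^ne^{-n}}{n!}$ we have $\mathrm{Level}(V)=\{\phi\ge c\}$, and $\phi(E^*)\ge ec\ge c$, so this superlevel set is a non-empty compact interval $[E_{\min},E_{\max}]$ (continuity and log-concavity of $\phi$) with $E_{\min}\le E^*\le n$; this is (1). For (2): $c(E_{\max}-E_{\min})\le\int\phi=1$ yields $E_{\max}-E_{\min}\le 1/c=e\,n!\,e^n/n^n=e\sqrt{2\pi n}(1+o(1))$ by Stirling; and $\phi(E_{\max})=c\le e^{-1}\phi(E^*)$ together with $(\log\phi)'\ge-1$ on $[E^*,E_{\max}]$ forces $E_{\max}-E^*\ge1$, giving the lower bound.

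Parts (3)--(5) follow quickly. For (3): for any $E\in\mathrm{Level}(V)$, $Z(E)\ge c\,e^{E}$ gives $Z(E)^{1/n}\ge c^{1/n}e^{E/n}$, and $c^{1/n}=1+o(1)$ (Stirling), $e^{E/n}\ge1$, so $\Vol(K_{E_{\min}})^{1/n}\ge 1+o(1)$; on the other hand $1=\int_0^\infty e^{-E}Z(E)\,dE\ge\tfrac{Z(E_{\max})}{E_{\max}^{n}}\,\gamma(n+1,E_{\max})$ together with the elementary $\gamma(n+1,x)\ge x^{n+1}e^{-x}/(n+1)$ gives $Z(E_{\max})^{1/n}\le (n+1)^{1/n}e^{E_{\max}/n}/E_{\max}^{1/n}\le e(1+o(1))$, using $E_{\max}\le n+O(\sqrt n)$ from (1)--(2). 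For (4): $\phi$ is a log-concave probability density with barycenter $E_V$, and for such densities one has the standard (sharp) bound $\phi(\text{mean})\ge e^{-1}\norm{\phi}_\infty$, so $\phi(E_V)\ge e^{-1}\tfrac{n^ne^{-n}}{n!}=c$, i.e.\ $E_V\in\mathrm{Level}(V)$ and $E_V\le E_{\max}$; moreover $E_V-1=\E_\mu V\le \E_\mu\langle X,\nabla V\rangle=n$, where the identity comes from $\int_{\Real^n}\text{div}(xe^{-V})\,dx=0$ and the inequality from convexity ($\langle x,\nabla V(x)\rangle\ge V(x)-V(0)=V(x)$; a standard subgradient/approximation argument handles non-smooth $V$), so $E_V\le(n+1)\wedge E_{\max}$. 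Part (5) is then Jensen: $V(b_\mu)=V(\E_\mu X)\le\E_\mu V=E_V-1\le n\wedge(E_{\max}-1)$.

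For (6)--(7) I would transport moments from $\mu$ to $\lambda_{K_E}$ via an $L^p$ estimate on the density ratio. One computes, for $p>1$,
\[
\norm{\tfrac{d\lambda_{K_E}}{d\mu}}_{L^p(\mu)}^p=\frac{1}{Z(E)^p}\int_{K_E}e^{(p-1)V}\,dx=\frac{1}{Z(E)^p}\int_0^E e^{(p-1)s}\,dZ(s)\ \le\ \frac{e^{(p-1)E}Z(E)}{Z(E)^p}=\phi(E)^{-(p-1)},
\]
so $\norm{d\lambda_{K_E}/d\mu}_{L^p(\mu)}\le c^{-(p-1)/p}$ for $E\in\mathrm{Level}(V)$. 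The point is to take $p=1+\tfrac1{\log(1+n)}$ (so $p'=\log(1+n)+1$): then $\tfrac{p-1}{p}\log\tfrac1c=\tfrac{\frac12\log n+O(\log\log n)}{\log(1+n)+1}\to\tfrac12$, hence $\norm{d\lambda_{K_E}/d\mu}_{L^p(\mu)}\le C_0$ with $C_0$ universal. Combining Hölder's inequality with Borell's reverse-Hölder inequality for log-concave $\mu$ (i.e.\ $\norm{\langle\cdot-b_\mu,\theta\rangle}_{L^q(\mu)}\le Cq\,\norm{\langle\cdot-b_\mu,\theta\rangle}_{L^2(\mu)}$), for every unit $\theta$ one gets $|\langle b_E-b_\mu,\theta\rangle|\le C_0\norm{\langle\cdot-b_\mu,\theta\rangle}_{L^{p'}(\mu)}\le C'\log(1+n)\,D_{\Poin}^{\Lin}(\mu)$ (using $\norm{\langle\cdot-b_\mu,\theta\rangle}_{L^2(\mu)}=\sqrt{\mathrm{Cov}_\mu[\theta]}\le D_{\Poin}^{\Lin}(\mu)$), which gives (6), and $\mathrm{Cov}_E[\theta]\le\int\langle x-b_\mu,\theta\rangle^2\,d\lambda_{K_E}\le C_0\norm{\langle\cdot-b_\mu,\theta\rangle}_{L^{2p'}(\mu)}^2\le C''\log^2(1+n)\,\mathrm{Cov}_\mu[\theta]$, which gives (7) (as an inequality of positive semi-definite forms, after taking the supremum over $\theta$). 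For (8): $D_{\Poin}^{\Lin}(\lambda_{K_E})^2=\lambda_{\max}(\mathrm{Cov}_E)\ge\tfrac1n\mathrm{tr}\,\mathrm{Cov}_E=\tfrac1n\E_{\lambda_{K_E}}\abs{X-b_E}^2$, and since the moment of inertia about the centroid of a body of given volume is minimized by a Euclidean ball, $\E_{\lambda_{K_E}}\abs{X-b_E}^2\ge c_1 n\,\Vol(K_E)^{2/n}$; as $\Vol(K_E)^{1/n}\ge\Vol(K_{E_{\min}})^{1/n}\ge c^{1/n}\ge c_2>0$ for $E\ge E_{\min}$, this gives $D_{\Poin}^{\Lin}(\lambda_{K_E})\ge c>0$. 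The main obstacle is really the tangent‑line estimate $\norm{\phi}_\infty\ge n^ne^{-n}/n!$ together with $E^*\le n$ (on which parts (1)--(5) all rest), and, for (6)--(7), the delicate choice of $p$ just above $1$ that keeps the density‑ratio $L^p$‑norm at $O(1)$ while paying only a logarithmic factor; the remaining work is bookkeeping with Stirling's formula and a routine smoothing of $V$.
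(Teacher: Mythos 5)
Your proposal is correct. Parts (1)--(5) and (8) follow essentially the paper's own route: the tangent-line argument at the mode $E^*$ of $\phi(E)=e^{-E}\Vol(K_E)$ giving $\norm{\phi}_\infty \geq n^n e^{-n}/n!$ and $E^* \leq n$ is exactly the paper's Lemma \ref{lem:concave} (your variant via $(\log h)' \leq 1/E$ from the homogeneity inclusion $tK_E \subset K_{tE}$ replaces the paper's use of $g'(t_g) \leq g(t_g)/t_g$ from concavity and $g(0)\geq 0$; just be careful to note the boundary case $E^*=0$, where the subgradient condition becomes $h'(0^+)\leq h(0)/n$ but the same tangent line still dominates $h$), the identification of $E_V$ as the barycenter of the log-concave density $\phi$ together with the one-dimensional bound $\phi(\mathrm{mean})\geq e^{-1}\norm{\phi}_\infty$ is precisely the paper's Proposition \ref{prop:Fradelizi}, and (8) is the same bath-tub argument. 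Where you genuinely diverge is in (6)--(7): the paper compares the tails of linear marginals directly, combining Borell's lemma (sub-exponential tails for $\mu$), Gr\"unbaum's theorem (halfspaces through $b_E$ have $\lambda_{K_E}$-mass $\geq 1/e$), and the level-set lower bound $\phi(E)\geq \frac1e \frac{n^n e^{-n}}{n!}$; you instead bound $\snorm{d\lambda_{K_E}/d\mu}_{L^p(\mu)} \leq \phi(E)^{-(p-1)/p}$ and choose $p-1 \sim 1/\log(1+n)$ so that this stays $O(1)$, then transfer first and second moments by H\"older together with the linear growth of $L^q$-moments of linear functionals of log-concave measures. Both approaches pay the same $\log(1+n)$ (respectively $\log^2(1+n)$) factor and both hinge on the same volume lower bound; your H\"older route is arguably slicker and avoids Gr\"unbaum's theorem, while the paper's tail argument yields the slightly stronger pointwise tail comparison used verbatim in its proof of (7). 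Note also that in (7) no supremum over $\theta$ is needed: the inequality $\scalar{\mathrm{Cov}_E\,\theta,\theta} \leq C''\log^2(1+n)\scalar{\mathrm{Cov}_\mu\,\theta,\theta}$ for every direction $\theta$ is already the asserted ordering of quadratic forms.
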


We remark that assertion (\ref{assertion:Fradelizi}) above is a refinement of a result of M.~Fradelizi \cite{FradeliziCentroid}, who showed that under the above assumptions $V(b_\mu) \leq n$ -- see Remark \ref{rem:Fradelizi} for further discussion. 
Combining Corollary \ref{cor:main} with Proposition \ref{prop:Level}, we can easily obtain:

\begin{thm}[Main Theorem]\label{thm:main2}
With the same notation and assumptions as in Corollary \ref{cor:main} and Proposition \ref{prop:Level}, for all $E \in [E_{\min},E_{\max}]$ such that $b = b_\mu \in \text{int}(K_E)$:
\[
D_{\Poin}(\lambda_{K_E}) \leq C_2 M \log(e + A^{(2)}(b_\mu) M) D_{\Poin}^{\Lin}(\lambda_{K_E})  . \]
In particular, this applies to all $E \in [(E_V - 1) \vee E_{\min}, E_{\max}]$, and notably, to $E = E_V$. 
\end{thm}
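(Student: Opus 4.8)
The plan is to read off Theorem \ref{thm:main2} as a bookkeeping corollary of Corollary \ref{cor:main} (applied with the base point $b=b_\mu$, the barycenter of $\mu$) together with the structural facts of Proposition \ref{prop:Level}. Before combining them I would record three elementary normalizations that let all the pieces fit: (i) since $\sum_i m_i=0$ forces $\max_i m_i\geq 0$, we have $M=\max_i e^{m_i}\geq 1$; (ii) one-dimensional integration by parts gives $\int V_i'(y)(y-b_i)\,d\mu_i(y)=1$ for \emph{every} $b_i\in\Real$ (the boundary terms vanish by log-concavity), so Cauchy--Schwarz yields $\alpha^{(2)}_i(b)\geq 1$ and hence $A^{(2)}(b)\geq 1$ for every $b$; (iii) consequently $M\log(e+A^{(2)}(b_\mu)M)\geq 1$, while $D_{\Poin}^{\Lin}(\lambda_{K_E})\geq c$ by Proposition \ref{prop:Level}(8).

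Next I would check that $b_\mu$ is an admissible base point for Corollary \ref{cor:main}, i.e. $b_\mu\in\intr(K_E)$: this is exactly the content of Proposition \ref{prop:Level}(\ref{assertion:Fradelizi}) (the refinement of Fradelizi's centroid estimate), which gives $V(b_\mu)\leq E_V-1$, hence $b_\mu\in\set{V<E}\subseteq\intr(K_E)$ whenever $E>E_V-1$; since $E_V\in[E_{\min},E_{\max}]$ by Proposition \ref{prop:Level}(4), this covers all $E\in\big((E_V-1)\vee E_{\min},E_{\max}\big]$, and in particular $E=E_V$. The one remaining boundary value, $E=(E_V-1)\vee E_{\min}$ in the sole doubtful case where it equals $E_V-1>E_{\min}$, I would dispose of by a routine approximation (letting $E$ decrease to this value and using continuity of the quantities involved), and I would note this in a remark rather than belabor it.

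With $b=b_\mu$ admissible, Corollary \ref{cor:main} gives
\[
 D_{\Poin}(\lambda_{K_E}) \leq C\brac{ M\log\brac{e+A^{(\infty,2)}(b_\mu) M} + \frac{\abs{b_E-b_\mu}}{\sqrt{n}} + D_{\Poin}^{\Lin}(\lambda_{K_E}) } ,
\]
and I would then absorb the two error terms in turn. For the first, Lemma \ref{lem:Fradelizi2} (valid precisely because $b=b_\mu$) controls the $L^\infty$-ingredient of $\alpha^{(\infty,2)}_i(b_\mu)$ by its $L^2$-ingredient; combined with $\alpha^{(2)}_i(b_\mu)\geq 1$ this gives $A^{(\infty,2)}(b_\mu)\leq C_1 A^{(2)}(b_\mu)$, and the elementary bound $\log(e+\lambda t)\leq(1+\log\lambda)\log(e+t)$ for $\lambda\geq 1$ then yields $M\log(e+A^{(\infty,2)}(b_\mu)M)\leq C_2\,M\log(e+A^{(2)}(b_\mu)M)$. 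For the barycenter-shift term, Proposition \ref{prop:Level}(6) gives $\abs{b_E-b_\mu}\leq C\log(1+n)D_{\Poin}^{\Lin}(\mu)$; since $\mu$ is the product of the $\mu_i$, $D_{\Poin}^{\Lin}(\mu)=\max_i\sqrt{\Var_{\mu_i}(X_i)}$, and the standard one-dimensional fact $\norm{f}_{L^\infty}\sqrt{\Var(f)}\leq C_0$ for log-concave $f$ (applied to $f=e^{-V_i}$, with $\norm{f}_{L^\infty}=e^{-m_i}$) gives $D_{\Poin}^{\Lin}(\mu)\leq C_0 M$; using also that $\log(1+n)/\sqrt{n}$ is bounded, we get $\abs{b_E-b_\mu}/\sqrt{n}\leq C_3 M\leq C_3\,M\log(e+A^{(2)}(b_\mu)M)$. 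At this point $D_{\Poin}(\lambda_{K_E})\leq C_4\big(X+Y\big)$ with $X:=M\log(e+A^{(2)}(b_\mu)M)\geq 1$ and $Y:=D_{\Poin}^{\Lin}(\lambda_{K_E})\geq c$, and I would finish by converting the sum to the asserted product via $X+Y=XY(1/Y+1/X)\leq(1+c^{-1})XY$.

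I do not expect a genuine obstacle: the statement is a corollary of the three hard inputs already established. The two places that need care are (a) justifying $b_\mu\in\intr(K_E)$ over the full stated range of $E$, for which Proposition \ref{prop:Level}(\ref{assertion:Fradelizi}) is tailor-made and which is why the base point $b_\mu$ is the right choice; and (b) turning the additive estimate of Corollary \ref{cor:main} into a multiplicative comparison with $D_{\Poin}^{\Lin}(\lambda_{K_E})$, which hinges on the universal lower bounds $M\log(e+A^{(2)}(b_\mu)M)\geq 1$ and $D_{\Poin}^{\Lin}(\lambda_{K_E})\geq c$ and on absorbing the barycenter shift using $D_{\Poin}^{\Lin}(\mu)\leq C_0 M$.
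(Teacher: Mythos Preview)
Your proposal is correct and follows essentially the same route as the paper: apply Corollary \ref{cor:main} with $b=b_\mu$, invoke Lemma \ref{lem:Fradelizi2} to pass from $A^{(\infty,2)}(b_\mu)$ to $A^{(2)}(b_\mu)$, absorb the barycenter-shift term via Proposition \ref{prop:Level}(6), convert the additive estimate into a multiplicative one using $M\geq 1$ and Proposition \ref{prop:Level}(8), and justify the stated range of $E$ via Proposition \ref{prop:Level}(\ref{assertion:Fradelizi}) together with a limiting argument at the left endpoint. In fact you are more explicit than the paper on one point: the paper writes that $\abs{b_\mu-b_E}/\sqrt{n}=o(1)$ and may be ``discarded at the expense of changing numerical constants'', whereas you correctly spell out the missing ingredient $D_{\Poin}^{\Lin}(\mu)\leq C_0 M$ (via Lemma \ref{lem:1D}(2)) that makes this absorption work.
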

\smallskip
\noindent
Theorem \ref{thm:main2} confirms the KLS conjecture for $K_E$ as above whenever $A^{(2)}(b_\mu), M\leq C$.
By simultaneously rescaling all functions $V_i$, we can also easily remove the assumption that $\int \exp(-V_i(y)) dy = 1$ - see Corollary \ref{cor:main-scale}.

\smallskip

Finally, we state our $\log(1+n)$ estimate on the relation between the Poincar\'e constants of a general log-concave measure $\mu$ and its level-sets. 

\begin{thm}[From log-concave measure to good level-sets]\label{thm:logn}
Let $\mu = \exp(-V(x)) dx$ denote a log-concave probability measure on $\Real^n$ with $\min V = 0$. Denote its level sets by $K_E := \set{x \in \Real^n \; ;\; V(x) \leq E}$, and define as usual $\text{Level}(V)$ by (\ref{eq:intro-level-def}). Then for all $E \in \text{Level}(V)$, and in particular, for $E = E_V$, we have:
\[
D_{\Poin}(\lambda_{K_E}) \leq C D_{\Poin}(\mu) \log(e + \sqrt{n} D_{\Poin}(\mu)) .
\]
\end{thm}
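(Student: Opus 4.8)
The plan is to run the same three-step transference scheme that underlies Theorem \ref{thm:main}, but to replace its product-structure-dependent $L^p(\mu)$ estimate (the content of Section \ref{sec:Lp}) by a soft $L^\infty$ estimate, which costs a dimension-dependent factor in place of one governed by $A^{(2)}$. First I would reduce to an arbitrary level $E\in\text{Level}(V)$: since $E_V\in\text{Level}(V)$ by Proposition \ref{prop:Level}(4), the distinguished case $E=E_V$ follows from the general one. Fix such an $E$, translate coordinates so that the barycenter $b_\mu$ of $\mu$ lies at the origin — this changes neither $D_{\Poin}(\lambda_{K_E})$ nor $D_{\Poin}(\mu)$ — write $K=K_E$, and record the facts that will be used repeatedly: for a log-concave probability measure $\nu$ one has $D_{\Poin}(\nu)\asymp 1/D_{\Che}(\nu)$ (Maz'ya--Cheeger--Buser--Ledoux, as recalled in the introduction); concentration upgrades to an isoperimetric, hence Poincar\'e, inequality in the presence of convexity of the support; $\Vol(K)^{1/n}\asymp 1$ by the volume estimate of Section \ref{sec:ZE} (inequality (\ref{eq:intro-vol})); $b_\mu=0\in K_{(E_{\max}-1)\wedge n}\subset \intr K$ by Proposition \ref{prop:Level}(5); and $D_{\Poin}^{\Lin}(\lambda_K)\geq c$ by Proposition \ref{prop:Level}(8), which together with Proposition \ref{prop:Level}(6)--(7) lets us absorb lower-order additive terms into the desired bound.

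The heart of the matter is Step 1. Following the construction in the proof of Theorem \ref{thm:main}, I would introduce the linearized auxiliary measure $\mu_{K,w}$ carried by a shell around $\partial K$ of relative width $w=w_0/\sqrt n$, with $w_0>0$ a free parameter to be optimized, whose density is obtained by replacing $V$ along each ray through the origin by its first-order radial Taylor polynomial at $\partial K$. Convexity of $V$ furnishes the trivial one-sided bound $d\mu_{K,w}/d\mu\gtrsim 1$ on the shell, up to the explicit normalizing constant, and the crux is the matching estimate $\snorm{d\mu_{K,w}/d\mu}_{L^\infty}\lesssim e^{Cw_0}$. This is exactly where $E\in\text{Level}(V)$ enters quantitatively: the Laplace-type computations of Section \ref{sec:ZE} both pin down $\Vol(K)^{1/n}$ and force the radial derivative $\sscalar{\nabla V(\theta),\theta}$ to be of the correct order $n$ on the bulk of $\partial K$ (consistent with the identity $\int\sscalar{x,\nabla V(x)}\,d\mu=n$), so that across a shell of relative width $w_0/\sqrt n$ the true potential $V$ departs from its radial linearization by only $O(w_0)$. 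With the density ratio now comparable from both sides on the shell — which, by the Level condition, carries a non-negligible $\mu$-mass — the concentration function of $\mu_{K,w}$ is controlled by that of $\mu$ up to the factor $e^{Cw_0}$, and the Cheeger equivalence gives $D_{\Poin}(\mu_{K,w})\lesssim e^{Cw_0}D_{\Poin}(\mu)$. This is the analogue of the Section \ref{sec:Lp} estimate, now with the purely geometric loss $e^{Cw_0}$ replacing $A^{(2)}M$ and with no product structure invoked.

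Steps 2 and 3 then run verbatim as in Section \ref{sec:W1}. The Wasserstein estimate there yields $W_1(\mu_{K,w},\sigma_{\partial K})$ bounded by the absolute thickness of the shell, of order $w_0$ since $\abs{x}\asymp\sqrt n$ on $K$; a transport bound transfers concentration with an additive loss, so $D_{\Poin}(\sigma_{\partial K})\lesssim e^{Cw_0}D_{\Poin}(\mu)+w_0$. The Hardy-type inequality with boundary then passes from the cone measure $\sigma_{\partial K}$ to $\lambda_K$, the radial foliation of relative width $w_0/\sqrt n$ contributing a factor that degrades as $w_0\to 0$. Collecting the three estimates produces a bound of the shape $D_{\Poin}(\lambda_K)\lesssim \Phi(w_0)\brac{e^{Cw_0}D_{\Poin}(\mu)+w_0}$ up to lower-order terms, in which the Step-1 loss $e^{Cw_0}$ increases while the Step-3 factor $\Phi(w_0)$ decreases in $w_0$; balancing them at $w_0\asymp\log(e+\sqrt n\,D_{\Poin}(\mu))$ gives $D_{\Poin}(\lambda_K)\lesssim D_{\Poin}(\mu)\log(e+\sqrt n\,D_{\Poin}(\mu))$, the residual additive contributions being absorbed using $D_{\Poin}^{\Lin}(\lambda_K)\geq c$ together with the bounds on $\abs{b_E-b_\mu}$ and $\text{Cov}_E$ from Proposition \ref{prop:Level}(6)--(7). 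Undoing the initial translation finishes the proof.

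The only genuinely delicate point I anticipate is Step 1: obtaining the \emph{sharp} exponent in $\snorm{d\mu_{K,w}/d\mu}_{L^\infty}\lesssim e^{Cw_0}$ — that the radial linearization of $V$ over a shell of relative width $w_0/\sqrt n$ is accurate to order $w_0$ rather than, say, $w_0\sqrt n$ — which is precisely what ties the argument to the quantitative content of $\text{Level}(V)$ and to the volume estimate of Section \ref{sec:ZE}; and, hand in hand with it, transferring concentration from $\mu$ on all of $\Real^n$ to $\mu_{K,w}$, which lives on a \emph{non-convex} shell, so that the convenient conditional-measure/convexity shortcut is unavailable and one must argue directly with concentration functions. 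By contrast, Steps 2--3 and the optimization over $w_0$ are bookkeeping essentially identical to the proof of Theorem \ref{thm:main}.
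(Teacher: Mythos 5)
Your overall architecture (linearized measure on a thin annulus, Wasserstein transfer to the cone measure, Hardy-type inequality, then concentration-to-Poincar\'e via convexity of $K_E$) is indeed the paper's, but the central claim of your Step 1 is a genuine gap, and it is exactly where the $\sqrt{n}$ in the theorem originates. For a general log-concave $\mu$, the only pointwise information available on the annulus $\norm{x}_{K_E}\in[1-w,1]$ is $V(x)\leq E$ and $-n(\norm{x}_{K_E}-1)\leq nw$, so the best uniform bound is $\frac{d\mu_{K_E,w}}{d\mu}\leq\frac{1}{Z_{E,w}}e^{nw}$, i.e.\ an exponential loss $e^{w_0\sqrt{n}}$, not $e^{Cw_0}$ (this is precisely Lemma \ref{lem:Linfty}). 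Your justification --- that $E\in\text{Level}(V)$ forces the radial derivative $\scalar{\nabla V(x),x}$ to be of order $n$ on the bulk of $\partial K_E$, so that the linearization error across the shell is $O(w_0)$ --- is unsupported: the Level condition is an integrated volumetric statement and yields no pointwise (nor even bulk) lower bound on the radial derivative; moreover an $L^\infty$ bound on the density ratio is a worst-case statement, so even a bulk estimate would at best give an $L^p$ bound of the type of Proposition \ref{prop:Lp}. Obtaining lower-tail concentration of $\scalar{\nabla V(X),X}$ around its mean $n$ is exactly the one place where the paper uses the product structure (one-sided Hoeffding), as the remark after Proposition \ref{prop:Lp} emphasizes; for a general log-concave $\mu$ no such control is available. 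Indeed, if your $e^{Cw_0}$ estimate held (with $Z_E\geq 1/e$ on $\text{Level}(V)$), the same bookkeeping would give $D_{\Poin}(\lambda_{K_E})\leq C D_{\Poin}(\mu)\log(e+D_{\Poin}(\mu))$ with no dimensional factor, a strengthening which the Remark following Theorem \ref{thm:logn} explains cannot be ensured by this route for general levels in $\text{Level}(V)$.

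The paper's actual resolution is much softer: accept the crude $e^{nw}$ loss and take the annulus extremely thin, $w_0=1/\sqrt{n}$, so that $e^{nw}=e$; the only price is then the prefactor $\frac{1}{c\,w_0 Z_E}\lesssim\sqrt{n}$ from Lemma \ref{lem:Gamma} (using $Z_E\geq 1/e$ by Proposition \ref{prop:ZE}), and since this prefactor enters the first-moment estimate only logarithmically after the Barthe--Milman transfer with $p=\infty$, one lands exactly on $C D_{\Poin}(\mu)\log(e+\sqrt{n}D_{\Poin}(\mu))$. Two further points of bookkeeping in your sketch do not match the actual mechanism: the optimization $w_0\asymp\log(e+\sqrt{n}D_{\Poin}(\mu))$ is incompatible with the construction (Lemma \ref{lem:Gamma} needs $w_0\leq 1$), and the Hardy step carries no $w_0$-dependence at all, so there is no trade-off of the shape $\Phi(w_0)\,e^{Cw_0}$ you describe; also, the leftover term $\frac{1}{n}\int\abs{x}\,d\lambda_{K_E}$ cannot be dismissed via $\abs{x}\asymp\sqrt{n}$ on $K_E$ (false for a general level set), and the paper bounds it by $C\log(1+n)D_{\Poin}(\mu)$ through the containment $K_E-b_E\subset(n+1)\,\text{Cov}^{1/2}_{\lambda_{K_E}}(B_2^n)$ of \cite{KLS} combined with the covariance comparison of Proposition \ref{prop:Level}~(7), together with $D_{\Poin}(\mu)\geq c$ via the isotropic-constant normalization.
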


\begin{rem}
An inspection of the proof of Theorem \ref{thm:logn} (and the relevant parts of Proposition \ref{prop:Level}) reveals that we could actually omit the $\sqrt{n}$ above, obtaining a dimension-independent estimate, for any $E \geq 0$ so that:
\begin{equation} \label{eq:exceptional}
e^{-E} \Vol(K_E) \geq c > 0,
\end{equation}
for some universal constant $c>0$. Unfortunately, such exceptionally good levels $E$ do not necessarily exist, and the best one can ensure in general is:
\[
\exists E \geq 0 \;\;\; e^{-E} \Vol(K_E) \geq \frac{n^n e^{-n}}{n!} = \frac{1}{\sqrt{2 \pi n}} (1 + o(1)) ,
\]
corresponding to the case $V(x) = \norm{x - x_0}$ for some norm $\norm{\cdot}$ (whose unit-ball has appropriate volume) and $x_0 \in \Real^n$, which results in the above $\sqrt{n}$ factor. As we did not find a reasonable condition for ensuring (\ref{eq:exceptional}), we only mention this variant in passing. 
\end{rem}

\section{Transferring Concentration: From Product Measure to Linearized One on Annulus} \label{sec:Lp}

Given a metric space $(X,d)$ and a Borel probability measure $\nu$, the associated concentration profile $\K = \K(X,d,\nu) : \Real_+ \rightarrow [0,1/2]$ is defined by:
\[
\K(r) := \sup \set{ \nu(X \setminus A^d_r) \; ; \; \mu(A) \geq 1/2} ~, ~ A^d_r := \set{x \in X \; ; \; d(x,A) < r} .
\]
Equivalently, it is well-known and immediate to verify that:
\[
\K(r) = \sup \set{ \nu \set{f \geq \med_\mu f + r} \; ; \; \text{$f : (X,d) \rightarrow \Real$ is $1$-Lipschitz }} ;
\]
here $\med_\nu f$ denotes any median of $f$ with respect to $\nu$, i.e. a median of the push-forward of $\nu$ by $f$. 

Given two Borel probability measures $\nu_1,\nu_2$ defined on $(X,d)$, we will require the following particular case of \cite[Proposition 2.2]{BartheEMilmanConservativeSpins} for transferring concentration information from $\K_1 = \K(X,d,\nu_1)$ to $\K_2 = \K(X,d,\nu_2)$. 

\begin{prop}[Barthe--Milman] \label{prop:BartheMilman}
Assume that $\snorm{\frac{d\nu_2}{d \nu_1}}_{L^p(\nu_1)} \leq  L$ for some $p \in (1,\infty]$.
Then setting $q = p^* = \frac{p}{p-1}$, we have:
\[
\K_2(r) \leq 2 L \K^{1/q}_1(r/2) \;\;\; \forall r > 0 . 
\]
\end{prop}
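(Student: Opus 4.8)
The plan is to deduce the estimate from a single application of H\"older's inequality combined with an elementary dichotomy, using only the set-theoretic definition of the concentration profiles $\K_1,\K_2$.

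First I would record the pointwise consequence of the hypothesis. Since $\snorm{\frac{d\nu_2}{d\nu_1}}_{L^p(\nu_1)} \leq L < \infty$ in particular forces $\nu_2 \ll \nu_1$, H\"older's inequality with conjugate exponents $p$ and $q = \frac{p}{p-1}$ (so that $\frac1q = 1 - \frac1p$) gives, for \emph{every} Borel set $S \subseteq X$,
\[
\nu_2(S) = \int_S \frac{d\nu_2}{d\nu_1}\, d\nu_1 \;\leq\; \snorm{\frac{d\nu_2}{d\nu_1}}_{L^p(\nu_1)} \snorm{1_S}_{L^q(\nu_1)} \;\leq\; L\, \nu_1(S)^{1/q},
\]
the case $p=\infty$, $q=1$ reading $\nu_2(S) \leq L\,\nu_1(S)$. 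I would also note, for later use, that $L \geq \snorm{\frac{d\nu_2}{d\nu_1}}_{L^1(\nu_1)} = \nu_2(X) = 1$.

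Next, fixing $r>0$ and a Borel set $A$ with $\nu_2(A) \geq 1/2$, the goal is to bound $\nu_2(X\setminus A^d_r)$ by $2L\,\K_1(r/2)^{1/q}$ and then pass to the supremum over such $A$. I would use two elementary facts about $\eps$-neighborhoods that follow at once from the triangle inequality: (i) $(B^d_{r/2})^d_{r/2} \subseteq B^d_r$ for any $B$; and (ii) every $x \in B$ has $d(x, X\setminus B^d_{r/2}) \geq r/2$, equivalently $B \subseteq X \setminus (X\setminus B^d_{r/2})^d_{r/2}$. Then I would split into two cases according to the value of $\nu_1(A^d_{r/2})$. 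If $\nu_1(A^d_{r/2}) \geq 1/2$, apply the definition of $\K_1$ to the set $A' := A^d_{r/2}$; by (i), $X\setminus A^d_r \subseteq X \setminus (A')^d_{r/2}$, so $\nu_1(X\setminus A^d_r) \leq \K_1(r/2)$, and the H\"older bound yields $\nu_2(X\setminus A^d_r) \leq L\,\K_1(r/2)^{1/q}$. If instead $\nu_1(A^d_{r/2}) < 1/2$, set $C := X\setminus A^d_{r/2}$, so $\nu_1(C) \geq 1/2$; by (ii), $A \subseteq X\setminus C^d_{r/2}$, so the definition of $\K_1$ applied to $C$ gives $\nu_1(A) \leq \K_1(r/2)$, and then $\frac12 \leq \nu_2(A) \leq L\,\nu_1(A)^{1/q} \leq L\,\K_1(r/2)^{1/q}$ by H\"older; since trivially $\nu_2(X\setminus A^d_r) \leq 1-\nu_2(A) \leq \frac12$, the bound $\nu_2(X\setminus A^d_r) \leq 2L\,\K_1(r/2)^{1/q}$ holds here too. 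Combining the two cases and taking the supremum over admissible $A$ finishes the proof.

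The one point requiring a moment's thought --- the main obstacle --- is the second branch of the dichotomy: a set $A$ that is heavy for $\nu_2$ but so diffuse for $\nu_1$ that even $A^d_{r/2}$ misses half the $\nu_1$-mass. The resolution is precisely that such a configuration is self-defeating, since via H\"older it forces $L\,\K_1(r/2)^{1/q} \geq 1/2$, rendering the target inequality vacuous; everything else is bookkeeping with $\eps$-neighborhoods and one line of H\"older. Alternatively, one can run the same argument through the equivalent $1$-Lipschitz/median description of $\K$, comparing $\med_{\nu_1} f$ with $\med_{\nu_2} f + r/2$, the two cases there corresponding exactly to the two cases above.
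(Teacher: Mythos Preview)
Your argument is correct. The paper does not supply its own proof of this proposition but simply cites it as \cite[Proposition 2.2]{BartheEMilmanConservativeSpins}, remarking that the case $p=\infty$ (on which the general case is based) appears in \cite[Lemma 3.1]{EMilmanGeometricApproachPartII}. Your proof is precisely the standard one: a single application of H\"older giving $\nu_2(S) \leq L\,\nu_1(S)^{1/q}$ for every Borel $S$, together with the dichotomy on whether $\nu_1(A^d_{r/2}) \geq 1/2$. Both neighborhood inclusions you use are immediate from the triangle inequality, and in the second branch the observation that $L\,\K_1(r/2)^{1/q} \geq 1/2$ renders the bound vacuous is exactly the point. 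There is nothing to add.
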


We will use Proposition \ref{prop:BartheMilman} with both $p <\infty$ and $p=\infty$. The latter simpler case, on which the proof of Proposition \ref{prop:BartheMilman} is in fact based, was originally proved in \cite[Lemma 3.1]{EMilmanGeometricApproachPartII} (with more precise numerical constants).

\subsection{Linearized Measure on Annulus}

Given a compact set $\Omega \subset \Real^n$ containing the origin in its interior, let $\norm{x}_\Omega := \inf \set{ \lambda > 0 \; ;\; x \in \lambda \Omega}$ denote its associated gauge function on $\Real^n$. 
Let $\mu = \exp(-V(x)) dx$ denote a general log-concave probability measure on $\Real^n$. Given $E > E_0 := V(0)$, denote by $K_E := \set{V \leq E}$ the convex level-set of $V$ at level $E$, which necessarily contains the origin in its interior. Let $\mu_{K_E}$ denote the probability measure on $\Real^n$ having density proportional to $\exp(-n \norm{x}_{K_E})$; it will be more convenient to write it as:
\[
\mu_{K_E} = \frac{1}{Z_E} e^{-(E + n(\norm{x}_{K_E} - 1))} dx ,
\]
where $Z_E > 0$ is a normalization constant ensuring that $\mu_{K_E}$ is a probability measure. Integration in polar coordinates easily yields:
\[
Z_E = \frac{n! e^n}{n^n} e^{-E} \Vol(K_E) .
\]

Given an additional parameter $w \in (0,1]$, we define the probability measure $\mu_{K_E,w}$ by conditioning $\mu_{K_E}$ on the annulus $1 - w \leq \norm{x}_{K_E} \leq 1$, namely:
\[
\mu_{K_E , w} := \frac{1}{Z_{E,w}} e^{-(E + n(\norm{x}_{K_E} - 1))} 1_{\norm{x}_{K_E} \in [1-w,1]} dx ,
\]
where again $Z_{E,w} > 0$ is an appropriate normalization constant. Note that the density of $\mu_{K_E,w}$ on the unit-sphere $\{\norm{x}_{K_E} = 1\}$ is constant and thus proportional to that of $\mu$. Furthermore, we will later see that our choice of the potential's slope (namely the coefficient $n$ above) coincides \emph{on-average} with that of $\mu$. Consequently, the measure $\mu_{K_E}$ should be thought of as a version of $\mu$ whose potential has been linearized about the unit-sphere $\{\norm{x}_{K_E} = 1\}$, with $\mu_{K_E,w}$ being in addition restricted to the annulus $\norm{x}_{K_E} \in [1-w,1]$. 

\begin{rem}
Our preference to work with the annulus $\norm{x}_{K_E} \in [1-w,1]$ instead of with (the perhaps more natural)  $\norm{x}_{K_E} \in [1-w,1+w]$, is because this permits us to employ a one-sided concentration estimate (Theorem \ref{thm:Hoeffding} below) instead of a two-sided one. Consequently, we only need to demand an $L^2$ integrability assumption from our random-variables, instead of an exponential integrability assumption which a standard two-sided estimate would require. 
\end{rem}

\smallskip

We will typically set $w = \frac{w_0}{\sqrt{n}}$ due to the following:

\begin{lem} \label{lem:Gamma}
For all $w_0 \in [0 , 1]$, if $w = \frac{w_0}{\sqrt{n}}$ then:
\[
Z_{E,w} \geq c w_0 Z_E ,
\]
for some universal numeric constant $c > 0$. 
\end{lem}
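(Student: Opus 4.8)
My plan is to reduce the claim to a one-dimensional integral estimate by integrating in polar coordinates with respect to the gauge $\norm{\cdot}_{K_E}$, and then to establish that estimate by hand. Writing $g(r) := e^{-nr} r^{n-1}$ and repeating the polar-coordinate computation already used above to evaluate $Z_E$ (but integrating the radial variable only over $[1-w,1]$), one gets
\[
Z_{E,w} = n \Vol(K_E)\, e^{n-E} \int_{1-w}^{1} g(r)\,dr , \qquad Z_E = n \Vol(K_E)\, e^{n-E}\int_0^\infty g(r)\,dr = n\Vol(K_E)\,e^{n-E}\,\frac{(n-1)!}{n^n},
\]
so that $\frac{Z_{E,w}}{Z_E} = \frac{n^n}{(n-1)!}\int_{1-w}^1 g(r)\,dr$. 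Hence it suffices to prove $\int_{1-w}^1 g(r)\,dr \ge c\, w_0\, \frac{(n-1)!}{n^n}$ whenever $w = w_0/\sqrt n$ and $w_0 \in (0,1]$.

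The heart of the matter is that $\frac{n^n}{(n-1)!} g$ is the density of a $\mathrm{Gamma}(n,1)/n$ random variable, which has mean $1$ and standard deviation $1/\sqrt n$; the window $[1-w,1]$ has width exactly $w_0$ times this standard deviation, so one expects $g$ not to decay over it. To make this precise I would substitute $r = 1 - s/\sqrt n$, obtaining
\[
\int_{1-w}^1 g(r)\,dr = \frac{e^{-n}}{\sqrt n}\int_0^{w_0} \exp\!\Big(\sqrt n\, s + (n-1)\log\!\big(1 - \tfrac{s}{\sqrt n}\big)\Big)\,ds .
\]
When $n\ge 4$ one has $s/\sqrt n \le 1/2$ throughout the range of integration, so the elementary bound $\log(1-u)\ge -u-u^2$ (valid for $u\le 1/2$) shows the exponent is $\ge \frac{s}{\sqrt n} - \frac{(n-1)s^2}{n} \ge -s^2 \ge -1$; therefore $\int_{1-w}^1 g(r)\,dr \ge e^{-n-1}\, w_0/\sqrt n$. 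The three values $n\in\{1,2,3\}$ I would dispose of directly: for each fixed such $n$, the map $w_0 \mapsto \frac1{w_0}\int_{1-w_0/\sqrt n}^1 g(r)\,dr$ is continuous and strictly positive on $(0,1]$ and extends continuously to $g(1)/\sqrt n>0$ at $w_0=0$, hence is bounded below by a positive constant, and one takes the minimum of these finitely many constants together with $e^{-1}/\sqrt n$ from the general case.

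To finish, Stirling's bound in the one-sided form $(n-1)! = n!/n \le \sqrt{2\pi}\,e^{1/12}\, n^n e^{-n}/\sqrt n$ gives $\frac{(n-1)!}{n^n}\le \sqrt{2\pi}\,e^{1/12}\,\frac{e^{-n}}{\sqrt n}$, so that $\int_{1-w}^1 g(r)\,dr \ge e^{-n-1}w_0/\sqrt n \ge \frac{e^{-1}}{\sqrt{2\pi}\,e^{1/12}}\, w_0\, \frac{(n-1)!}{n^n}$; plugging this into $\frac{Z_{E,w}}{Z_E} = \frac{n^n}{(n-1)!}\int_{1-w}^1 g(r)\,dr$ yields $Z_{E,w}/Z_E \ge c\, w_0$ with, say, $c = e^{-1}/(\sqrt{2\pi}\,e^{1/12})$.

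I expect the main (really the only nontrivial) obstacle to be the change-of-variables step showing that $g$ stays of order $e^{-n}$ across the window $[1-w_0/\sqrt n,\,1]$ \emph{uniformly in $n$}: this is exactly where the scaling $w \sim 1/\sqrt n$ is needed — a wider window of order $1$ would not give a bound of the correct (linear in $w_0$) form — and it is also the only place where a harmless finite case check for small $n$ is required. The polar-coordinate reduction and the Stirling comparison are routine.
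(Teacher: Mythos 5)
Your proof is correct and follows essentially the same route as the paper: reduce via polar coordinates to the Gamma integral $\frac{n^n}{(n-1)!}\int_{1-w}^1 e^{-nr}r^{n-1}\,dr$, show the integrand stays of order $e^{-n}$ across the window of width $w_0/\sqrt{n}$, and conclude with Stirling's formula. The only cosmetic difference is that the paper bounds the integrand via unimodality and the elementary estimate $e^{nw}(1-w)^{n-1}\geq (1-w_0^2/n)^{n-1}\geq c''$, which works uniformly in $n$ and avoids your separate check for $n\in\{1,2,3\}$.
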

\begin{proof}
Let $X$ denote a random vector distributed according to $\mu_{K_E}$. Recalling that its density is proportional to $\exp(-n \norm{x}_{K_E})$, observe that $\norm{X}_{K_E}$ is distributed according to the Gamma distribution $\frac{n^n}{( n-1)!} e^{-nr} r^{n-1} dr$, and that:
\[
\frac{Z_{E,w}}{Z_E} = \P(\norm{x}_{K_E} \in [1-w,1]) = \frac{n^n}{( n-1)!} \int_{1-w}^{1} e^{-nr} r^{n-1} dr .
\]
The claim for $w_0$ of the order of $1$ is already clear, since the latter Gamma distribution may be realized as the law of $\frac{1}{n} \sum_{i=1}^n Y_i$, where $Y_i$ is a sequence of i.i.d. exponential random variables with parameter $1$, so that $\E(Y_i) = \Var(Y_i) = 1$ and hence $\E(\norm{X}_{K_E}) = 1$ and $\Var(\norm{X}_{K_E}) = \frac{1}{n}$; 
similarly, it is possible to extend this reasoning to all $w_0 \in [\frac{C}{\sqrt{n}},1]$ using the Berry--Esseen Theorem  (e.g. \cite{Petrov-SumsOfIndependentRVsBook}). 
To see the claim for all $w_0 \in [0,1]$, we use the fact that the density of the Gamma distribution is unimodal, and so we may lower bound the above integral as follows:
\[
\geq \frac{n^n}{( n-1)!} w \min\brac{e^{-n(1-w)} (1-w)^{n-1} , e^{-n} } . \]
Using Stirling's formula, we see that $w \frac{n^n}{( n-1)!} e^{-n} = w \sqrt{n} \frac{1}{\sqrt{2 \pi}} (1 + o(1))$ as $n \rightarrow \infty$, and in particular $\geq c' w_0$ for some constant $c' > 0$ and all $n \geq 1$. It remains to note that:
\[
e^{nw} (1 - w)^{n-1} \geq (1+ w)^{n} (1 - w)^{n-1} \geq (1-w^2)^{n-1} = \brac{1-\frac{w_0^2}{n}}^{n-1}\geq c'' > 0 ,
\]
for all $w_0 \in [0,1]$ and $n \geq 1$ (with $0^0$ interpreted as $1$). This concludes the proof. 

\end{proof}

\subsection{Dimension-Dependent Estimate}

Our proof of Theorem \ref{thm:logn} employs the following simple dimension-dependent estimate:

\begin{lem} \label{lem:Linfty}
For any log-concave probability measure $\mu = \exp(-V(x)) dx$, $E > V(0)$ and $w = \frac{w_0}{\sqrt{n}}$ with $w_0 \in (0,1]$, we have:
\[
\norm{\frac{d\mu_{K_E,w}}{d\mu}}_{L^\infty} \leq \frac{1}{ c w_0 Z_E} \exp(w_0 \sqrt{n}) ,
\]
where $c >0$ is the constant from Lemma \ref{lem:Gamma}. 
\end{lem}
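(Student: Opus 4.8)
The plan is to bound the density ratio $\frac{d\mu_{K_E,w}}{d\mu}$ pointwise on the annulus $\set{\norm{x}_{K_E} \in [1-w,1]}$ (it vanishes off the annulus). Writing both densities explicitly, we have
\[
\frac{d\mu_{K_E,w}}{d\mu}(x) = \frac{1}{Z_{E,w}} e^{-(E + n(\norm{x}_{K_E}-1))} e^{V(x)} 1_{\norm{x}_{K_E} \in [1-w,1]} .
\]
So the task reduces to controlling $e^{V(x) - E - n(\norm{x}_{K_E}-1)}$ on the annulus. First I would use Lemma \ref{lem:Gamma} to replace $Z_{E,w}$ by $c w_0 Z_E$ from below, which produces the prefactor $\frac{1}{c w_0 Z_E}$. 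It remains to show that the exponent $V(x) - E - n(\norm{x}_{K_E} - 1)$ is at most $w_0 \sqrt{n} = n w$ on the annulus.

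The key step is the convexity/monotonicity comparison between $V$ and the radial gauge-linear function. Fix $x$ with $\norm{x}_{K_E} = r \in [1-w,1]$. Along the ray $t \mapsto tx/r$ for $t \in [0,\infty)$, the function $t \mapsto V(tx/r)$ is convex (restriction of the convex $V$ to a line through the origin, reparametrized), equals $E_0 = V(0)$ at $t=0$ and equals $E$ at $t=1$ (since $\norm{x/r}_{K_E}=1$ means $x/r \in \partial K_E$, hence $V(x/r) = E$). By convexity, on the interval $t\in[1-w,1]$ — which contains our point at $t = r$ — we have $V(tx/r) \le E$ for $t \le 1$; more precisely, convexity gives $V(tx/r) \le E$ for all $t \in [0,1]$, so in particular $V(x) = V(r \cdot x/r) \le E$. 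Thus $V(x) - E \le 0$, and since $n(\norm{x}_{K_E}-1) = n(r-1) \ge -nw = -w_0\sqrt n$, we get
\[
V(x) - E - n(\norm{x}_{K_E}-1) \le 0 + nw = w_0\sqrt{n},
\]
which is exactly the bound needed. Combining with the $Z_{E,w} \ge c w_0 Z_E$ estimate gives the claim.

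The only subtle point — and the main thing to get right — is the direction of the convexity comparison: one must be careful that it is $V(x) \le E$ (not $\ge$) that holds on the inner part of the annulus, and that the loss $n(\norm{x}_{K_E}-1)$ is at worst $-nw$; both are straightforward from convexity of $t \mapsto V(tx/r)$ together with its endpoint values at $t = 0$ and $t = 1$, but it is worth writing out that $V(x/r) = E$ uses that $\partial K_E = \set{V = E}$ (which holds because $E > V(0)$ and $V$ is convex, so the sublevel set has the gauge $\norm{\cdot}_{K_E}$ with $\set{\norm{\cdot}_{K_E}=1}$ a level set of $V$). No product structure of $\mu$ is needed here, consistent with the remark that this $L^\infty$ estimate is what drives the dimension-dependent $\log(1+n)$ bound in Theorem \ref{thm:logn}.
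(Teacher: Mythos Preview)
Your proposal is correct and follows exactly the paper's approach: write the density ratio, bound the exponent $V(x)-E-n(\norm{x}_{K_E}-1)$ by $nw=w_0\sqrt n$ on the annulus, and invoke Lemma~\ref{lem:Gamma} for $Z_{E,w}\ge c w_0 Z_E$. One minor simplification: the convexity argument along rays is unnecessary---since $K_E=\{V\le E\}$ by definition, $\norm{x}_{K_E}\le 1$ already means $x\in K_E$ and hence $V(x)\le E$; the paper's proof uses exactly this one-line observation.
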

\begin{proof}
Note that $V \leq E$ for all $\norm{x}_{K_E} \leq 1$, and in particular on the annulus $\norm{x}_{K_E} \in [1-w,1]$. It follows that on this annulus:
\[
\frac{d\mu_{K_E,w}}{d\mu}(x) = \frac{1}{Z_{E,w}} \exp(V - E - n (\norm{x}_{K_E} - 1)) \leq \frac{1}{Z_{E,w}} e^{n w} ,
\]
and the assertion follows by Lemma \ref{lem:Gamma}.
\end{proof}

\noindent
We will see in Section \ref{sec:proofs} that this is already enough to deduce the worst-case $\log(1+n)$ estimate of Theorem \ref{thm:intro}.

\subsection{Dimension-Independent Estimate}

To obtain a dimension-independent estimate, we restrict ourselves in this subsection to product measures $\mu$. Let $V_i : \Real\rightarrow \Real$, $i=1,\ldots,n$, denote a sequence of convex functions so that $\mu_i := \exp(-V_i(y)) dy$ is a probability measure on $\Real$. Denote $V(x) := \sum_{i=1}^n V_i(x_i)$, $x \in \Real^n$, and let $\mu$ denote the corresponding product measure on $\Real^n$:
\[
\mu := \mu_1 \otimes \ldots \otimes \mu_n = \exp(-V(x)) dx .
\]

Our proof of Theorem \ref{thm:main} relies on the following crucial estimate. The proof strategy is in some sense similar to the one employed in \cite{BartheEMilmanConservativeSpins}, where a zeroth order approximation was used about a hyperplane (instead of a first order approximation about a convex hypersurface as in the present case).
\begin{prop} \label{prop:Lp}
Let $(\alpha^{(\infty,2)}_i)_{i=1}^n$ be defined as in (\ref{eq:main-psi1}), and recall that $A^{\infty,2} := \frac{1}{\sqrt{n}}\snorm{(\alpha^{(\infty,2)}_i)_{i=1}^n}_2$.
Let $w_0 \in (0,1/2]$, $w = \frac{w_0}{\sqrt{n}}$ and $p \geq 1$. 
Then for all $E > E_0 := V(0)$:
\[
\int \brac{\frac{d\mu_{K_E,w}}{d\mu}}^p d\mu \leq \frac{1 + \sqrt{2 \pi}}{Z_E^p} \frac{1}{(c w_0)^p} \exp(8 p^2 w_0^2 (A^{(\infty,2)})^2) ,
\]
where $c > 0$ is the constant from Lemma \ref{lem:Gamma}.
\end{prop}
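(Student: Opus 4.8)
\textbf{Setup and reduction.} The plan is to compute the $L^p(\mu)$ norm of the density $h := \frac{d\mu_{K_E,w}}{d\mu}$ directly. On the annulus $A_w := \set{x \; ; \; \norm{x}_{K_E} \in [1-w,1]}$ we have
\[
h(x) = \frac{1}{Z_{E,w}} \exp\brac{V(x) - E - n(\norm{x}_{K_E} - 1)} ,
\]
and $h = 0$ off $A_w$. Hence
\[
\int h^p \, d\mu = \frac{1}{Z_{E,w}^p} \int_{A_w} \exp\brac{(p-1)\brac{V(x) - E - n(\norm{x}_{K_E} - 1)}} \exp\brac{-n(\norm{x}_{K_E}-1)} \, dx .
\]
Using Lemma \ref{lem:Gamma} to replace $Z_{E,w}$ by $c w_0 Z_E$ (paying the factor $(cw_0)^{-p}$), it remains to bound $\frac{1}{Z_E^p}\int_{A_w} \exp((p-1)G(x)) e^{-n(\norm{x}_{K_E}-1)}\,dx$ where $G(x) := V(x) - E - n(\norm{x}_{K_E}-1)$ is the ``gap'' between the true potential and its radial linearization, which vanishes on the sphere $\set{\norm{x}_{K_E}=1}$. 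Since $V \leq E$ on $A_w$ and $\norm{x}_{K_E} \leq 1$ there, we have $G(x) \leq -n(\norm{x}_{K_E}-1) \leq nw = w_0\sqrt n$, recovering the trivial $L^\infty$ bound; the point is to do better in $L^p$ by integrating the fluctuations of $G$.

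\textbf{Main step: controlling the gap $G$ via the product structure.} This is where the product assumption enters and where I expect the real work to be. Passing to polar coordinates $x = r\theta$ with $\theta \in \partial K_E$, $r \in [1-w,1]$, and writing $\pi$ for the cone (surface) measure on $\partial K_E$ whose density is proportional to that of $\mu$ restricted there, one reduces to estimating, for each radius $r$,
\[
\int_{\partial K_E} \exp\brac{(p-1)\brac{V(r\theta) - V(\theta)}} \, d\tilde\pi(\theta)
\]
(after absorbing the purely radial factors, which are controlled exactly as in Lemma \ref{lem:Gamma}). Now $V(r\theta) - V(\theta) = \sum_{i=1}^n \brac{V_i(r\theta_i) - V_i(\theta_i)}$, and by convexity of each $V_i$ and $r = 1 - s$ with $s \in [0,w]$,
\[
0 \leq V_i(\theta_i) - V_i(r\theta_i) \leq V_i'(\theta_i)(1-r)\theta_i = s\, V_i'(\theta_i)\theta_i ,
\]
so $V(\theta) - V(r\theta) \leq s \sum_i V_i'(\theta_i)\theta_i$, and this sum, integrated over the sphere against the cone measure (which, on the sphere $\set{\norm{x}_{K_E}=1}$, has the same density as $\mu$), is exactly the kind of quantity that the Hoeffding-type one-sided concentration estimate (Theorem \ref{thm:Hoeffding}, referenced in the paper) is designed for: it is a sum of independent-ish terms $V_i'(\theta_i)\theta_i$ whose $L^2$-size under $\mu_i$ is $\alpha^{(2)}_i$ and whose negative part is $L^\infty$-bounded by $\alpha^{(\infty,2)}_i - 1$. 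One therefore gets a sub-Gaussian-type bound on the upper tail of $\sum_i V_i'(\theta_i)\theta_i$, hence on $\exp\brac{(p-1)s \sum_i V_i'(\theta_i)\theta_i}$ after integrating $s$ over $[0,w]$ with $w = w_0/\sqrt n$; the factor $\frac1{\sqrt n}$ in $w$ is precisely what converts $\snorm{(\alpha^{(\infty,2)}_i)_i}_2^2$ into $n (A^{(\infty,2)})^2$ with the right scaling, producing the claimed $\exp\brac{8p^2 w_0^2 (A^{(\infty,2)})^2}$ (up to adjusting the numeric constant $8$).

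\textbf{Assembling.} Combining: the polar-coordinate radial integral contributes a factor comparable to $Z_E$ (so that the $Z_E^{-p}$ and the integral of the radial part combine to something like $Z_E^{-(p-1)}$ times a bounded factor — this bookkeeping is routine given Lemma \ref{lem:Gamma} and Stirling, and produces the harmless additive constant $1+\sqrt{2\pi}$), the radius-$s$ integration against the exponential moment bound contributes $\exp(8p^2w_0^2(A^{(\infty,2)})^2)$, and Lemma \ref{lem:Gamma} contributes $(cw_0)^{-p}$. Putting these together yields
\[
\int \brac{\frac{d\mu_{K_E,w}}{d\mu}}^p d\mu \leq \frac{1+\sqrt{2\pi}}{Z_E^p}\frac{1}{(cw_0)^p}\exp\brac{8p^2 w_0^2 (A^{(\infty,2)})^2} .
\]
The main obstacle is the second step: making rigorous the replacement of the surface integral over $\partial K_E$ by an integral against the product measure $\mu$ (justifying that the cone measure on the unit sphere has density proportional to $\mu$, and handling the radial variable $r \neq 1$ correctly in $G$), and then applying the one-sided concentration inequality with the precise constants so that the $w_0^2$ and $(A^{(\infty,2)})^2$ dependence comes out as stated. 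The restriction to the annulus $[1-w,1]$ (rather than $[1-w,1+w]$) is what makes the one-sided $L^2$ estimate sufficient here, as the preceding remark in the paper anticipates.
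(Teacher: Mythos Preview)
Your overall architecture (convexity bound on the gap $G$, then one-sided Hoeffding, then integrate) matches the paper, but the execution in the ``main step'' has a genuine gap that would make the argument fail as written.

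\textbf{The independence problem.} You propose to pass to polar coordinates $x = r\theta$ with $\theta \in \partial K_E$, and then to apply Theorem~\ref{thm:Hoeffding} to the sum $\sum_i V_i'(\theta_i)\theta_i$ when $\theta$ is distributed according to the cone measure on $\partial K_E$. But under any measure supported on $\partial K_E = \{\sum_i V_i(\theta_i) = E\}$ the coordinates $\theta_i$ are \emph{not} independent (you even write ``independent-ish''), and Theorem~\ref{thm:Hoeffding} requires genuine independence. The fact that the cone measure has density proportional to $e^{-V}|_{\partial K_E}$ does not restore independence; the constraint $\sum_i V_i(\theta_i)=E$ destroys it. The paper avoids this by never passing to the sphere: it bounds the tail function
\[
g(r) := \mu\{x \in A_w : G(x) \geq r\}
\]
directly under the product measure $\mu$ on $\Real^n$, where $X_i \sim \mu_i$ \emph{are} independent. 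Convexity at $x$ (not at $\theta$) gives $V(x/(1{+}t)) \geq V(x) - \frac{t}{1+t}\langle \nabla V(x),x\rangle$ with $t=\norm{x}_{K_E}-1$, hence $G(x)\geq r$ forces $\langle \nabla V(x),x\rangle - n \leq -r\frac{1-w}{w}$, and now Hoeffding applies legitimately to $\sum_i (V_i'(X_i)X_i - 1)$. One then integrates $\int e^{pG} \,d\mu = \frac{1}{Z_{E,w}^p}\bigl(1 + \int_0^\infty p e^{pr} g(r)\,dr\bigr)$ by the layer-cake formula; the constant $1+\sqrt{2\pi}$ drops out of this computation, not from any polar bookkeeping.

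\textbf{A secondary direction issue.} Your convexity bound $V_i(\theta_i)-V_i(r\theta_i) \leq s\,V_i'(\theta_i)\theta_i$ (convexity at $\theta_i$) gives an \emph{upper} bound on $V(\theta)-V(r\theta)$, hence a \emph{lower} bound on $G(r\theta) = -(V(\theta)-V(r\theta)) + ns$. To control $\exp(pG)$ from above you need the opposite inequality, i.e.\ convexity at $r\theta$, which yields $G(x) \leq -\frac{s}{r}\sum_i V_i'(x_i)x_i + ns$ with $x=r\theta$. Once you write it this way you see that the natural variable is $x$ under $\mu$, not $\theta$ on the sphere --- which brings you back to the paper's approach and resolves the independence issue simultaneously.
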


For the proof, we will require the following concentration-inequality for sums of independent random-variables.
When the random-variables are bounded, this inequality is classical and due to Hoeffding \cite{Hoeffding-ClassicalInq}; we will need the following version, when the random-variables are only assumed to be bounded from one side (see Maurer \cite[Theorem 2.1, Corollary 2.2]{Maurer-OneSidedHoeffding} for a simple derivation, Bentkus \cite[Theorem 1.3]{Bentkus-OneSidedHoeffding} for improved optimal constants in the exponent; compare also with an earlier result by McDiarmid \cite[Theorem 2.7]{McDiarmid-ConcentrationChapter} in the spirit of Bernstein's inequality \cite[Corollary 2.11]{BLM-Book}):

\begin{thm}[One-Sided Hoeffding Inequality] \label{thm:Hoeffding}
Let $Y_1,\ldots,Y_n$ denote a sequence of independent random variables so that $\E(Y_i) = 0$ and:
\[ \norm{(Y_i)_{-}}_{L^\infty} \vee \norm{Y_i}_{L^2} < \infty ~~\forall i=1,\ldots,n .
\] Then:
\[
\P\brac{\sum_{i=1}^n Y_i \leq -r } \leq \exp\brac{- \frac{1}{2} \frac{r^2}{\sum_{i=1}^n  (\norm{(Y_i)_{-}}^2_{L^\infty} + \norm{Y_i}^2_{L^2})}} \;\;\; \forall r > 0  .
\]
\end{thm}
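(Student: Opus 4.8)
The plan is to reduce the one-sided estimate to a Chernoff bound for sums of \emph{non-negative} independent random variables. We may clearly assume that $b_i := \norm{(Y_i)_-}_{L^\infty}$ and $v_i := \norm{Y_i}^2_{L^2}$ are finite for every $i$, since otherwise the right-hand side equals $1$ and there is nothing to prove. Setting $X_i := Y_i + b_i$, the hypothesis $(Y_i)_- \leq b_i$ a.s. gives $X_i \geq 0$; the $X_i$ remain independent, and because $\E Y_i = 0$ we have $\E X_i = b_i$ and $\E X_i^2 = v_i + b_i^2$. Since $\sum_{i=1}^n Y_i = \sum_{i=1}^n (X_i - \E X_i)$, it will suffice to show that for independent non-negative $X_i$ with finite second moments,
\[
\P\Big( \sum_{i=1}^n (X_i - \E X_i) \leq -r \Big) \leq \exp\Big( -\frac{r^2}{2 \sum_{i=1}^n \E X_i^2} \Big) , \qquad r > 0 ,
\]
which, after substituting $\E X_i^2 = \norm{Y_i}^2_{L^2} + \norm{(Y_i)_-}^2_{L^\infty}$, is exactly the asserted inequality.

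For this non-negative bound I would run the standard exponential-moment argument. The one ingredient is the elementary inequality $e^{-t} \leq 1 - t + \tfrac{1}{2} t^2$, valid for all $t \geq 0$ (the two sides agree at $t=0$, and the derivative of their difference, $1 - t - e^{-t}$, is $\leq 0$ on $[0,\infty)$). Plugging in $t = \lambda X_i$ for $\lambda \geq 0$ and taking expectations yields $\E[e^{-\lambda X_i}] \leq 1 - \lambda \E X_i + \tfrac{\lambda^2}{2}\E X_i^2 \leq \exp(-\lambda \E X_i + \tfrac{\lambda^2}{2}\E X_i^2)$, hence $\E[e^{-\lambda(X_i - \E X_i)}] \leq \exp(\tfrac{\lambda^2}{2}\E X_i^2)$. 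Multiplying over $i$ by independence, applying Markov's inequality to $e^{-\lambda \sum_i (X_i - \E X_i)}$, and finally optimizing in $\lambda$ (the optimal choice being $\lambda = r / \sum_i \E X_i^2$) delivers the displayed bound.

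I do not expect any genuine obstacle: the statement is classical and the argument above is entirely self-contained, so one may alternatively just invoke Maurer \cite{Maurer-OneSidedHoeffding}, whose deviation bound for sums of non-negative random variables is precisely the reduced inequality. The only points needing a line of care are the disposal of the degenerate cases (some $\norm{(Y_i)_-}_{L^\infty}$ or $\norm{Y_i}_{L^2}$ infinite, or $\sum_i \E X_i^2 = 0$) and the one-line convexity estimate for $e^{-t}$; recovering the optimal constant in the exponent (as in Bentkus \cite{Bentkus-OneSidedHoeffding}) would demand a finer analysis of the moment generating function, but is not needed for the applications in this paper.
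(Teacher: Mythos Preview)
Your argument is correct and self-contained. Note that the paper does not actually supply its own proof of this theorem: it is stated with references to Maurer \cite{Maurer-OneSidedHoeffding} for a simple derivation and to Bentkus \cite{Bentkus-OneSidedHoeffding} for sharper constants. Your reduction---shift by the essential lower bound to obtain non-negative $X_i$, then apply the Chernoff method via the elementary inequality $e^{-t} \leq 1 - t + t^2/2$ on $[0,\infty)$---is precisely Maurer's approach, so your proof aligns with the source the paper cites rather than diverging from it.
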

\noindent
Here $\norm{Z}_{L^2} := (\mathbb{E} Z^2)^{1/2}$ and $\norm{Z}_{L^\infty} := \esssup \abs{Z}$.

\smallskip

\begin{proof}[Proof of Proposition \ref{prop:Lp}]
Recall that $K_E$ was defined as the convex level set $\set{V \leq E}$, and so for all $x \neq 0$, $V(\frac{x}{1+t}) = E$ with $t := \norm{x}_{K_E} - 1$. 
Consequently, for all $r \geq 0$:
\begin{align*}
g(r) & := \mu \set{ \norm{x}_{K_E} \in [1-w,1] \; ; \; V(x) - (E + n (\norm{x}_{K_E} - 1)) \geq r} \\
& \leq \mu \set{x \in \Real^n \; ; \; \exists t \in [-w,0] \;,\; V(x) - \brac{V\brac{\frac{x}{1+t}} + n t}\geq r} .
\end{align*}
By convexity of $V$, we know that on the subset of full measure in $\Real^n$ where $V$ is differentiable we have:
\[
V\brac{\frac{x}{1+t}} = V\brac{x - \frac{t}{1+t} x } \geq V(x) - \frac{t}{1+t} \scalar{\nabla V(x), x} ,
\]
and so we may continue the chain of inequalities above as follows (as $1+t \geq 0$):
\begin{align}
\nonumber & \leq \mu \set{x \in \Real^n \; ; \; \exists t \in [-w,0]  \;,\; \frac{t}{1+t} \scalar{\nabla V(x),x} - n t \geq r} \\
\nonumber & \leq \mu \set{x \in \Real^n \; ; \; \exists t \in [-w,0]  \;,\;  t\scalar{\nabla V(x),x} - n t \geq r(1+t) + n t^2 } \\ 
\label{eq:annulus1} & \leq  \mu \set{x \in \Real^n \; ; \; \scalar{\nabla V(x),x} - n \leq - r \frac{1-w}{w} } .
\end{align}

Note that if $X$ is a random-vector distributed according to $\mu$, then $\scalar{\nabla V(X),X} - n = \sum_{i=1}^n Y_i$, where $Y_i := V_i'(X_i) X_i - 1$ are independent random-variables with each $X_i$ distributed according to $\mu_i$. 
Integrating by parts, we clearly have:
\[
\E Y_i = \int x_i V_i'(x_i) \exp(-V_i(x_i)) dx_i - 1 = 0 ,
\]
and our assumption (\ref{eq:main-psi1}) translates into:
\[
\norm{(Y_i)_-}_{L^\infty} \vee \norm{Y_i}_{L^2}  \leq \brac{1 + \norm{(V_i'(y) y)_-}_{L^\infty(\mu_i)}} \vee \norm{V_i'(y) y}_{L^2(\mu_i)} = \alpha^{(\infty,2)}_i < \infty . 
\]
Applying Theorem \ref{thm:Hoeffding}, we deduce:
\begin{equation} \label{eq:crucial}
\P\brac{\sum_{i=1}^n Y_i \leq -\sqrt{n} s} \leq \exp\brac{-\frac{1}{4} \frac{s^2}{(A^{(\infty,2)})^2} }  \;\;\; \forall s > 0 .  
\end{equation}
Since we assume that $w = \frac{w_0}{\sqrt{n}} \leq \frac{1}{2}$, we have $\frac{1-w}{w} \geq \frac{1}{2w}$, so we apply the above inequality with $s = \frac{r}{2 w_0}$ to estimate (\ref{eq:annulus1}), and deduce that:
\[
g(r) \leq \exp\brac{- \frac{r^2}{16 w_0^2 (A^{(\infty,2)})^2} } \;\;\; \forall r > 0 . \]

Integrating by parts and using the elementary inequality $x \leq \exp(x^2/2)$, 
we can now deduce:
\begin{align*}
\int \brac{\frac{d\mu_{K_E,w}}{d\mu}}^p d\mu &= \frac{1}{Z_{E,w}^p} \int \exp\brac{p \brac{V(x) - \brac{E + n (\norm{x}_{K_E} - 1)}}} 1_{\norm{x}_{K_E} \in [1-w, 1]}  d\mu(x) \\
 & \leq \frac{1}{Z_{E,w}^p} \brac{1 + \int_0^\infty p \exp(p r) g(r) dr } \\
 & \leq  \frac{1}{Z_{E,w}^p} \brac{1 + \sqrt{2 \pi} \sqrt{8 p^2 w_0^2 (A^{(\infty,2)})^2} \exp( 4 p^2 w_0^2 (A^{(\infty,2)})^2)  } \\
 & \leq \frac{1}{Z_{E,w}^p} \brac{1 + \sqrt{2 \pi} \exp(8 p^2 w_0^2 (A^{(\infty,2)})^2) } \\
 & \leq \frac{1 + \sqrt{2 \pi}}{Z_{E,w}^p} \exp(8 p^2 w_0^2 (A^{(\infty,2)})^2) .
\end{align*}
The asserted estimate then follows by recalling Lemma \ref{lem:Gamma}. 
\end{proof}

\begin{rem}
The proof presented above is essentially the only place in this work where the product structure of $\mu = \exp(-V(x)) dx$ (or equivalently, the separable structure of $V(x) = \sum_{i=1}^n V_i(x_i)$) is used. The sole purpose of this product structure is to obtain (\ref{eq:crucial}), asserting a strong concentration of $\scalar{\nabla V(X),X}$ around its mean $n$. Any other condition which ensures a similar strong concentration would equally result in confirmation of the KLS conjecture for additional classes of convex bodies, simply by following the arguments in this work. 
\end{rem}

\section{Properties of the level-set $K_E$} \label{sec:ZE}

\subsection{Bounding $\Vol(K_E)$}

To apply the bound of Proposition \ref{prop:Lp}, we will need to bound $Z_E$ from below, where recall:
\[
Z_E = \frac{n! e^n}{n^n} e^{-E} \Vol(K_E) ~,~ K_E = \set{V \leq E} .
\]

\begin{dfn*}
Given $q \geq 0$, denote:
\begin{equation} \label{eq:level-def}
\text{Level}^{(q)}(V) := \set{E \geq  0 \; ; \; e^{-E} \Vol(K_E) \geq e^{-q} \frac{n^n e^{-n}}{n!}} = \set{E \geq 0 \; ; \; Z_E \geq e^{-q}} ,
\end{equation}
the subset of good level-sets of $V$. 
\end{dfn*}

\noindent
Consequently, our goal will be to study $\text{Level}^{(q)}(V)$; obviously this family is monotone increasing in $q$. 
Note that $\text{Level}(V)$ defined in Section \ref{sec:results} is precisely $\text{Level}^{(1)}(V)$. All of our results in this work remain valid with $\text{Level}^{(1)}(V)$ replaced by $\text{Level}^{(q)}(V)$ for any $q \geq 1$, with an additional appropriate dependence on $q$, but with the exception of this section, we refrain from this extraneous generality. 

\smallskip
Now observe that the convexity of $V$ ensures that the map:
\begin{equation} \label{eq:def-g}
\Real_+ \ni E \mapsto g(E) := \Vol(K_E)^{\frac{1}{n}} 
\end{equation}
is a concave function on its support by the Brunn-Minkowski inequality (e.g. \cite{Schneider-Book,GardnerSurveyInBAMS}). By separability $\min V = \sum_{i=1}^n \min V_i = 0$, and so $g$ is supported on $\Real_+$. Note that we do not assume that $g(0) = 0$. Lastly, integration by parts yields:
\begin{equation} \label{eq:normalization}
\int_0^\infty e^{-E} g(E)^n dE = \int_0^\infty e^{-E} \Vol\set{V \leq E} dE = \int_{\Real^n} e^{-V(x)} dx = \int_{\Real^n} d\mu = 1 . 
\end{equation}
On the basis of these three properties, we will prove a slightly more detailed version of assertions (1) - (3) of Proposition \ref{prop:Level} from the Introduction:
\begin{prop} \label{prop:ZE}
Let $\mu = \exp(-V(x)) dx$ denote a log-concave probability measure on $\Real^n$ with $\min V = 0$. For $E \geq 0$ let $K_E := \set{V \leq E}$, and let $\text{Level}^{(q)}(V)$ be defined by (\ref{eq:level-def}) for $q \geq 0$. Then:
\begin{enumerate}
\item $\text{Level}^{(q)}(V)$ is a non-empty closed interval $[E^{(q)}_{\min},E^{(q)}_{\max}]$ with $E^{(q)}_{\min} \leq n$.
\item $q \leq E^{(q)}_{\max} - E^{(q)}_{\min} \leq e^q \frac{n! e^n}{n^n} = e^q \sqrt{2 \pi n} (1 + o(1))$ as $n \rightarrow \infty$. 
\item Denoting $c^{(q)}_n := e^{-\frac{q}{n}}\frac{n/e}{(n!)^{1/n}} \rightarrow 1$ as $n \rightarrow \infty$, we have:
\[
c^{(q)}_n \leq \Vol(K_{E^{(q)}_{\min}})^{\frac{1}{n}} \leq 1 \vee e c^{(q)}_n  ~,~  c^{(q)}_n \leq  \Vol(K_{E^{(q)}_{\max}})^{\frac{1}{n}} \leq e c^{(q)}_n (1+o(1)).
\]
\end{enumerate}
\end{prop}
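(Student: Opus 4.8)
The plan is to work entirely with the three properties of the function $g(E) := \Vol(K_E)^{1/n}$ isolated just before the statement: it is concave on its support, which is all of $\Real_+$ (with $g(0)$ possibly positive), and it satisfies the normalization $\int_0^\infty e^{-E} g(E)^n \, dE = 1$. Rewriting the defining condition, $E \in \text{Level}^{(q)}(V)$ iff $Z_E = \frac{n!e^n}{n^n} e^{-E} g(E)^n \geq e^{-q}$, i.e. iff $e^{-E} g(E)^n \geq e^{-q} \frac{n^n e^{-n}}{n!} =: e^{-q} \gamma_n$. So set $h(E) := e^{-E/n} g(E)$, a function on $\Real_+$; the level set condition becomes $h(E)^n \geq e^{-q}\gamma_n$, equivalently $h(E) \geq (e^{-q}\gamma_n)^{1/n} = c_n^{(q)}$ in the notation of assertion (3). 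The key structural observation is that $h = e^{-E/n} g$ is \emph{log-concave} in $E$ — indeed $\log h(E) = -E/n + \log g(E)$, and $\log g$ is concave since $g \geq 0$ is concave, while $-E/n$ is linear. Hence $\text{Level}^{(q)}(V) = \{E \geq 0 : h(E) \geq c_n^{(q)}\}$ is a superlevel set of a log-concave (in particular unimodal, upper-semicontinuous) function, so it is automatically a closed interval $[E_{\min}^{(q)}, E_{\max}^{(q)}]$ — that gives the interval part of assertion (1) immediately, modulo non-emptiness.

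For non-emptiness and the quantitative bounds, the engine is the normalization identity $\int_0^\infty e^{-E} g(E)^n \, dE = 1$, i.e. $\int_0^\infty h(E)^n \, dE = 1$. First, non-emptiness and $E_{\min}^{(q)} \leq n$: I would argue by contradiction. If $h(E) < c_n^{(0)} = \gamma_n^{1/n}$ for all $E \in [0,n]$ (the $q=0$ case is the worst, and it suffices to handle it since $\text{Level}^{(q)}$ is increasing in $q$), then one needs to show $\int_0^\infty h^n < 1$, contradiction. On $[0,n]$ we have $h^n < \gamma_n$, contributing at most $n\gamma_n$. On $[n,\infty)$, log-concavity of $h$ together with $h(n) \le \gamma_n^{1/n}$ and the decay forced by integrability lets one bound the tail: since $h$ is log-concave and $\int_0^\infty h^n = 1$ with $h$ small on $[0,n]$, the mass on $[n,\infty)$ can be controlled by comparison with the extremal log-concave shape. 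The cleanest route: $g$ concave with $g(0)\ge 0$ forces $g(E) \le g(0) + g'(0^+)E$; combined with the integral constraint, the sharp extremizer is $V(x) = \|x - x_0\|$ for a norm whose unit ball has the right volume, for which $g(E) = (E_+ \text{ or } (E-a)_+)\cdot(\text{const})$ and one computes $\int e^{-E} g^n$ exactly via the Gamma function, recovering $\gamma_n = n^n e^{-n}/n!$ as the relevant constant. This is exactly the extremal case flagged in the remark after Theorem \ref{thm:logn}, and it pins down both $E_{\min}^{(q)} \le n$ and the volume bounds.

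For assertion (2), the lower bound $q \le E_{\max}^{(q)} - E_{\min}^{(q)}$: on the interval $[E_{\min}^{(q)}, E_{\max}^{(q)}]$ we have $e^{-E}g(E)^n \ge e^{-q}\gamma_n$, and since also $g$ is nondecreasing (it is concave, supported on all of $\Real_+$, and $g^n = \Vol(K_E)$ is nondecreasing in $E$), in particular $g(E) \ge g(E_{\min}^{(q)}) \ge (\text{the lower volume bound})$; then $e^{-E_{\max}^{(q)}} g(E_{\max}^{(q)})^n \ge e^{-q}\gamma_n$ while at the right endpoint $g$ has only grown, forcing $e^{-E_{\max}^{(q)}}$ not too small relative to $e^{-E_{\min}^{(q)}}$; a short computation comparing the endpoint conditions yields $E_{\max}^{(q)} - E_{\min}^{(q)} \ge q$ — essentially because moving $E$ by less than $q$ changes $e^{-E}$ by less than a factor $e^q$ while $g(E)^n$ can only increase, so one cannot have exited the superlevel set. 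For the upper bound $E_{\max}^{(q)} - E_{\min}^{(q)} \le e^q \gamma_n^{-1}$: on $[E_{\min}^{(q)}, E_{\max}^{(q)}]$ we have $h(E)^n \ge e^{-q}\gamma_n$, so $1 = \int_0^\infty h^n \ge (E_{\max}^{(q)} - E_{\min}^{(q)}) \cdot e^{-q}\gamma_n$, giving the bound directly, and Stirling's formula $\gamma_n^{-1} = n!e^n/n^n = \sqrt{2\pi n}(1+o(1))$ gives the asymptotic form.

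Assertion (3) then falls out: at $E = E_{\max}^{(q)}$, the endpoint lies on the boundary of the superlevel set so $h(E_{\max}^{(q)}) = c_n^{(q)}$ (by upper-semicontinuity/continuity of $h$ on the interior of its support), i.e. $e^{-E_{\max}^{(q)}/n}\Vol(K_{E_{\max}^{(q)}})^{1/n} = c_n^{(q)}$, so $\Vol(K_{E_{\max}^{(q)}})^{1/n} = c_n^{(q)} e^{E_{\max}^{(q)}/n}$; combining $E_{\max}^{(q)} \le E_{\min}^{(q)} + e^q\gamma_n^{-1} \le n + e^q\gamma_n^{-1}$ (using $E_{\min}^{(q)} \le n$) with $e^{(n + O(\sqrt n))/n} = e(1+o(1))$ yields the upper bound $\Vol(K_{E_{\max}^{(q)}})^{1/n} \le e c_n^{(q)}(1+o(1))$, and the lower bound $\ge c_n^{(q)}$ is immediate from $E_{\max}^{(q)} \ge 0$. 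Similarly at $E_{\min}^{(q)}$ one has $h(E_{\min}^{(q)}) = c_n^{(q)}$ when $E_{\min}^{(q)}$ is interior to $\supp g = \Real_+$, i.e. when $E_{\min}^{(q)} > 0$; if $E_{\min}^{(q)} = 0$ then $h(0) \ge c_n^{(q)}$ and the bound $\Vol(K_0)^{1/n} = h(0) \le 1 \vee e c_n^{(q)}$ comes from $h(E_{\min}^{(q)})\le $ the max of $h$, controlled again via $\int h^n = 1$ and log-concavity. I expect the main obstacle to be the careful extremal-case bookkeeping in assertion (1) — proving $E_{\min}^{(q)} \le n$ and the matching lower volume bound $c_n^{(q)} \le \Vol(K_{E_{\min}^{(q)}})^{1/n}$ requires showing that the norm-potential $V(x) = \|x-x_0\|$ really is the worst case, which means one must rule out, using concavity of $g$ plus the single integral constraint, any "fatter" profile that keeps $h$ below $c_n^{(q)}$ on a longer initial stretch; the clean way is a rearrangement/comparison argument against the log-concave extremizer, and getting the constants to line up exactly with $n^n e^{-n}/n!$ is where the real work sits.
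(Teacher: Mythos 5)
Your soft framework coincides with the paper's: you use unimodality of $E \mapsto e^{-E}\Vol(K_E)$ (via concavity of $g$) to get that $\text{Level}^{(q)}(V)$ is a closed interval, and the normalization $\int_0^\infty e^{-E}g(E)^n\,dE = 1$ to get the upper bound $E^{(q)}_{\max}-E^{(q)}_{\min} \leq e^q\, n!e^n/n^n$ plus Stirling. But the crux of the proposition is never actually proved in your write-up: you need to know that the \emph{maximum} $M_g$ of $E \mapsto e^{-E}g(E)^n$ satisfies $M_g \geq e^{-n}n^n/n!$ and that the maximizer $t_g$ lies in $[0,n]$. Everything you leave open --- non-emptiness, $E^{(q)}_{\min} \leq n$, the lower volume bound $c^{(q)}_n \leq \Vol(K_{E^{(q)}_{\min}})^{1/n}$, and in fact also the lower bound $q \leq E^{(q)}_{\max}-E^{(q)}_{\min}$ --- hinges on exactly this statement, and you explicitly defer it to an unspecified ``rearrangement/comparison argument against the extremizer,'' conceding that this is ``where the real work sits.'' That is a genuine gap, not bookkeeping. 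Moreover, your sketch of the contradiction (``log-concavity of $h$ together with $h(n)$ small lets one bound the tail'') does not work as stated, because nothing you have established rules out the peak of $h$ sitting beyond $n$; ruling that out is precisely the content of the missing step and uses concavity of $g$ itself, not just log-concavity of $h$.

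The paper closes this gap with a short elementary argument (its Lemma \ref{lem:concave}), no rearrangement needed: at the maximizer $t_g$ either $t_g=0$ or $g'(t_g)=g(t_g)/n$, and concavity gives $g'(t_g) \leq (g(t_g)-g(0))/t_g \leq g(t_g)/t_g$, hence $t_g \leq n$; then concavity again gives $g(t_g+s) \leq g(t_g)(1+s/n)$ for $s \geq -t_g$, and integrating $e^{-t}g(t)^n$ against this bound yields $1 \leq M_g\, e^n n!/n^n$. With that in hand, your step (2) lower bound also needs repair: starting from an \emph{arbitrary} point of $\text{Level}^{(q)}$ and moving by $q$ loses a factor $e^{-q}$, which does exit the superlevel set if the starting value was only $e^{-q}\frac{n^ne^{-n}}{n!}$ (e.g.\ at the right endpoint). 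The correct version starts from the peak $t_g$, where the value is at least $\frac{n^ne^{-n}}{n!}$ (equivalently, the paper notes $W(t):=t-n\log g(t)$ has $W'\leq 1$ and walks from $t_g$ to $t_g+q$). Finally, in assertion (3) your treatment of the case $E^{(q)}_{\min}=0$ (bounding $h(0)$ by ``the max of $h$ controlled via $\int h^n=1$ and log-concavity'') is not justified; the paper simply observes $\Vol(K_0) \leq \int e^{-V}dx = 1$ since $V \geq 0$, which is what produces the $1 \vee e\,c^{(q)}_n$ in the statement.
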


\noindent
The proof is a based on the following:

\begin{lem} \label{lem:concave}
Let $g : \Real_+ \rightarrow \Real_+$ denote a (non-decreasing) non-negative concave function, so that:
\[
\int_0^\infty e^{-t} g(t)^n dt = 1 .
\]
Let $M_g$ denote the maximum of $e^{-t} g(t)^n$ on $\Real_+$, and let $t_g > 0$ denote the (necessarily unique) point on which it is attained. 
Then $t_g \leq n$ and $M_g \geq e^{-n} \frac{n^n}{n!}$.
\end{lem}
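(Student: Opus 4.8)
The plan is to analyze the function $h(t) := e^{-t} g(t)^n$ on $\Real_+$. Since $g$ is non-negative, non-decreasing and concave, and since $\int_0^\infty h(t)\,dt = 1 < \infty$, the function $g$ cannot grow faster than linearly, so $h(t) \to 0$ as $t \to \infty$; also $h$ is continuous. Hence $h$ attains a maximum $M_g$ at some point, and I would first argue that this maximizer is unique. This follows from strict log-concavity considerations: $\log h(t) = -t + n \log g(t)$, and $\log g$ is concave (being a concave non-decreasing function composed with $\log$), hence $\log h$ is concave; moreover it is not affine on any interval where $g > 0$ is non-constant (and $g$ must be non-constant somewhere, else $h$ is not integrable unless $g \equiv 0$, which contradicts $\int h = 1$), so the maximizer $t_g$ is unique. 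Actually, for the estimates below I only need \emph{a} maximizer, so I can sidestep the uniqueness discussion if convenient, but it is cheap.

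Next, the two quantitative claims. For the lower bound $M_g \geq e^{-n} n^n/n!$: since $M_g = \sup_t h(t)$, the idea is to compare against a reference profile. The extremal case for $\int_0^\infty e^{-t} g(t)^n\,dt$ under the concavity constraint is $g(t) = c t$ linear (the "norm" case mentioned in the paper), for which $\int_0^\infty e^{-t} (ct)^n\,dt = c^n n! = 1$ forces $c^n = 1/n!$, and then $\max_t e^{-t}(ct)^n = c^n n^n e^{-n} = n^n e^{-n}/n!$. To make this rigorous as a lower bound on $M_g$ for \emph{general} concave $g$, I would use the following monotonicity-type argument: by concavity and $g(0)\geq 0$, one has $g(\lambda t) \geq \lambda g(t)$ for $\lambda \in [0,1]$, hence for any fixed $t_0$ with $g(t_0) > 0$, $g(t) \geq (t/t_0) g(t_0)$ for $t \leq t_0$; combined with monotonicity $g(t) \geq g(t_0)$ for $t \geq t_0$. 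A clean way: note $1 = \int_0^\infty e^{-t} g(t)^n dt \leq M_g \int_0^\infty e^{-t} g(t)^n / (e^{-t} g(t)^n) \cdot (\ldots)$ — that's circular. Instead I would directly bound: for any $s > 0$, concavity gives $g(t) \geq \frac{t}{s} g(s)$ for $t \in [0,s]$, so $\int_0^s e^{-t} g(t)^n dt \geq g(s)^n s^{-n} \int_0^s e^{-t} t^n dt$; letting $s \to \infty$ and using $\int_0^\infty e^{-t} t^n dt = n!$ together with $g(s)^n e^{-s} \to 0$ requires care, so instead I evaluate at $s = t_g$ (the maximizer): on $[0, t_g]$, $h(t) = e^{-t} g(t)^n \geq e^{-t} (t/t_g)^n g(t_g)^n = (t/t_g)^n h(t_g) e^{t_g - t} \cdot$ — messy. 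The cleanest route: $1 = \int_0^\infty h(t) dt$; write $h(t) = h(t_g) \exp(\log h(t) - \log h(t_g))$ and use that $\phi := \log h$ is concave with maximum at $t_g$, so $\phi(t) - \phi(t_g) \leq \phi'(t_g^-)(t - t_g) \leq$ — but $\phi'(t_g) = 0$ at an interior max. So $\phi(t) \leq \phi(t_g)$ everywhere gives $1 \leq h(t_g) \cdot \infty$, useless. I therefore expect the lower bound on $M_g$ to be the \textbf{main obstacle}, and the right tool is a sharper comparison: among all concave $g \geq 0$ with $\int e^{-t} g^n = 1$, the quantity $\sup_t e^{-t} g^n$ is \emph{minimized} by the linear profile. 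I would prove this by a rearrangement/substitution argument: replace $g$ by the linear function $\tilde g(t) = g'(t_g^-)\, t$ (or the supporting line at $t_g$ through the origin direction), note $\tilde g \geq g$ pointwise near $0$ fails in general — so instead replace $g$ by its "linearization at $t_g$ rescaled to pass through the origin," i.e. $\tilde g(t) := (g(t_g)/t_g)\, t$, which by concavity and $g(0) \geq 0$ satisfies $\tilde g(t) \leq g(t)$ for $t \geq t_g$ and $\tilde g(t) \geq g(t)$ for $t \leq t_g$; this swap keeps $\max_t e^{-t}\tilde g(t)^n = \max_t e^{-t} g(t)^n = M_g$ (attained at the same or nearby point, to be checked) while the integral $\int e^{-t}\tilde g^n$ may only increase or decrease in a controlled way — one then normalizes and reads off $M_g \geq n^n e^{-n}/n!$ from the explicit linear computation. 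The bookkeeping here is the crux.

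For the bound $t_g \leq n$: at an interior maximum of $\phi(t) = -t + n\log g(t)$ we have $\phi'(t_g) = -1 + n g'(t_g)/g(t_g) = 0$, hence $g'(t_g)/g(t_g) = 1/n$. By concavity of $g$ with $g(0) \geq 0$, we have $g(t_g) \geq g(t_g) - g(0) \leq g'(0) t_g$ going the wrong way; instead use $g(t_g) = g(t_g) - g(0) + g(0) \geq t_g \cdot \frac{g(t_g)-g(0)}{t_g}$, and concavity gives $g'(t_g) \leq \frac{g(t_g) - g(0)}{t_g} \leq \frac{g(t_g)}{t_g}$. Therefore $\frac{1}{n} = \frac{g'(t_g)}{g(t_g)} \leq \frac{1}{t_g}$, i.e. $t_g \leq n$, as claimed. (If $g$ is not differentiable at $t_g$, run the same argument with left/right derivatives: $\phi$ has a max at $t_g$ so $\phi'(t_g^+) \leq 0 \leq \phi'(t_g^-)$, giving $g'(t_g^+)/g(t_g) \leq 1/n \leq g'(t_g^-)/g(t_g)$, and $g'(t_g^-) \leq \frac{g(t_g) - g(0)}{t_g} \leq \frac{g(t_g)}{t_g}$ still holds by concavity.) This part is short and I expect no difficulty with it. Once $t_g \leq n$ and the value $M_g \geq n^n e^{-n}/n!$ are in hand, the lemma is complete; I would then remark that these bounds feed directly into Proposition \ref{prop:ZE} since $Z_E$ is comparable to $e^{-E}\Vol(K_E) = e^{-E} g(E)^n = h(E)$ up to the fixed factor $n! e^n/n^n$, so $\text{Level}^{(q)}(V)$ is exactly the superlevel set $\{E : h(E) \geq e^{-q} n^n e^{-n}/n!\}$, which by concavity of $\phi = \log h$ is a closed interval containing $t_g \leq n$.
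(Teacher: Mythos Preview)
Your argument for $t_g \leq n$ is correct and matches the paper's. The gap is in the lower bound on $M_g$: you identify this as the crux but do not find the right comparison, and the chord approach you propose has the inequalities reversed (concavity with $g(0)\ge 0$ gives $\tilde g \le g$ on $[0,t_g]$ and $\tilde g \ge g$ on $[t_g,\infty)$, not the other way around), and in any case does not close.

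The missing idea is to use the \emph{tangent line} at $t_g$ as a global upper bound on $g$, not a chord through the origin. You already computed the first-order condition $g'(t_g) = g(t_g)/n$; combined with concavity this yields
\[
g(t) \;\le\; g(t_g) + g'(t_g)(t-t_g) \;=\; g(t_g)\Bigl(1 + \tfrac{t-t_g}{n}\Bigr) \qquad \forall t \ge 0 .
\]
Plugging this into the normalization and substituting $s = t - t_g$:
\[
1 \;=\; \int_0^\infty e^{-t} g(t)^n\,dt \;\le\; M_g \int_{-t_g}^\infty e^{-s}\Bigl(1+\tfrac{s}{n}\Bigr)^n ds \;\le\; M_g \int_{-n}^\infty e^{-s}\Bigl(1+\tfrac{s}{n}\Bigr)^n ds \;=\; M_g\, e^n\, \frac{n!}{n^n},
\]
where the second inequality uses $t_g \le n$ (which you have already established). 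This gives $M_g \ge n^n e^{-n}/n!$ immediately. This is precisely the paper's proof; the point is that the first-order condition fixes the slope of the tangent to be $g(t_g)/n$, so the tangent envelope reproduces exactly the extremal linear profile after a shift.
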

\begin{rem} \label{rem:concave}
Observe that both asserted estimates are sharp for the model function $g_0(t) := \frac{t}{(n!)^{1/n}}$, which indeed satisfies $\int_0^\infty e^{-t} g_0(t)^n dt = 1$.
\end{rem}
\begin{rem}
There are numerous instances in the literature of similar looking lemmas regarding measures on $\Real_+$ of the form $e^{g(t)} t^{n-1} dt$ with $g$ concave (and typically decreasing), arising when integrating a log-concave measure in $\Real^{n}$ in polar coordinates (see e.g. \cite[Lemma 2.1 and 2.2]{KlartagMilmanLogConcave}). However, we emphasize that it is not possible to obtain the delicate lower bound we need on $e^{-E} \Vol(K_E)$ by integrating $\mu = \exp(-V(x)) dx$ in polar coordinates - it is not hard to check that there is no fixed value of $E$ (including the typical guess $E=n$) that will in general work for every ray simultaneously.
\end{rem}
\begin{proof}[Proof of Lemma \ref{lem:concave}]
First, note that $t_g$ is indeed unique since $e^{t/n}$ is not concave on any non-empty open interval. For simplicity, we may assume (by approximation) that $g$ is differentiable. The maximum of $t \mapsto h(t) := -t + n \log g(t)$ on $\Real_+$ is attained at $t_g$, and hence either $t_g = 0$ or $g'(t_g) = \frac{g(t_g)}{n}$. In the latter case, concavity implies that $g'(t_g) \leq \frac{g(t_g) - g(0)}{t_g} \leq \frac{g(t_g)}{t_g}$, and therefore $t_g \leq n$. In the former case, necessarily $h'(0) \leq 0$, i.e.  $g'(0) \leq \frac{g(0)}{n}$. In either case, concavity implies:
\[
 g(t_g + s) \leq g(t_g) + g'(t_g) s \leq g(t_g) \brac{1 + \frac{s}{n}} \;\;\; \forall s \in [-t_g,\infty) .
\]
Consequently:
\begin{align*}
1 &= \int_0^\infty e^{-t} g(t)^n dt \leq M_g \int_{-t_g}^\infty e^{-s} \brac{1 + \frac{s}{n}}^n ds \leq M_g \int_{-n}^\infty e^{-s} \brac{1 + \frac{s}{n}}^n ds \\
& = M_g e^n \int_0^\infty e^{-t} \brac{\frac{t}{n}}^n dt = M_g e^n \frac{n!}{n^n} ,
\end{align*}
concluding the proof. 
\end{proof}

In fact, although this will not be used anywhere else in this work, we can claim the following:
\begin{prop} \label{prop:contraction}
With the same assumptions and notation as in Lemma \ref{lem:concave} and Remark \ref{rem:concave}, there exists an increasing and contracting map $T: \Real_+ \rightarrow \Real_+$ so that $T$ pushes forward the probability measure $e^{-t} g_0(t)^n dt$ onto $e^{-t} g(t)^n dt$ on $\Real_+$.
In particular, we have $t'_g := T(n) \leq n$ and $e^{-t'_g} g(t'_g)^n \geq e^{-n} g_0(n)^n = e^{-n} \frac{n^n}{n!}$. 
\end{prop}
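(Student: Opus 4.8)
The goal is to construct an increasing, $1$-Lipschitz (contracting) map $T:\Real_+\to\Real_+$ pushing the probability measure $d\nu_0 := e^{-t} g_0(t)^n\,dt$ onto $d\nu := e^{-t} g(t)^n\,dt$, where $g_0(t) = t/(n!)^{1/n}$. The natural candidate is the monotone transport map: if $F_0(t) := \nu_0([0,t])$ and $F(t) := \nu([0,t])$ are the respective CDFs, set $T := F^{-1}\circ F_0$. Since both $F_0$ and $F$ are continuous and strictly increasing on the supports of the measures (densities are positive for $t>0$), $T$ is well-defined, increasing, and automatically pushes $\nu_0$ forward to $\nu$. So the only real content is the \emph{contraction} property $T'(t)\le 1$, equivalently $F_0'(t) \le F'(t-)$... more precisely, $T'(t) = F_0'(t)/F'(T(t))$, so contraction is the statement $F_0'(t) \le F'(T(t))$ for all $t$.

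The standard way to verify that the monotone rearrangement between two log-concave-type densities on the line is a contraction is a comparison-of-densities argument: it suffices to show that the density ratio $e^{-t}g(t)^n / (e^{-t}g_0(t)^n) = (g(t)/g_0(t))^n$ behaves well. The key structural fact is that $g$ concave with $\int_0^\infty e^{-t}g(t)^n\,dt=1$ and $g_0$ linear with the same normalization means that $g$ ``spreads out'' relative to $g_0$ in a controlled way. Concretely, I would argue as follows. Write $\phi(t) := n\log g(t) - n\log g_0(t) = n\log(g(t)/t) + \log(n!)$. The first step is to show that $\phi$ is \emph{concave} in $t$: since $g$ is concave and positive, $\log g$ is concave, and $\log g_0 = \log t + \text{const}$ is concave as well — but a difference of concave functions need not be concave, so this needs care. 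Instead, observe that $g(t)/t$ is non-increasing (a concave function through or above the origin has decreasing slope-from-origin), hence $\phi$ is non-increasing. Combined with $\int e^{-t}g^n = \int e^{-t}g_0^n$, this forces the densities to cross exactly once: $g(t)/g_0(t) \ge 1$ for $t$ small and $\le 1$ for $t$ large, with a single crossing point $t^\ast$. From single-crossing of the densities one deduces that $F_0 - F$ is first $\le 0$ then $\ge 0$ (or has one sign change), which yields $T(t) \ge t$ for small $t$ and $T(t)\le t$ for large $t$; this is consistent with contraction but does not yet prove it.

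For the genuine contraction bound, the cleanest route is the following classical fact (essentially Lemma 2.1-type arguments as in \cite{KlartagMilmanLogConcave}, or a direct computation): if two probability densities $\rho_0,\rho$ on $\Real_+$ satisfy that $\rho/\rho_0$ is non-increasing on the support, then the monotone transport map from $\rho_0\,dt$ to $\rho\,dt$ is a contraction. Indeed, $T' = \rho_0(t)/\rho(T(t))$, and since $T(t)\le t$ is false in general — so instead one uses: $\int_0^t \rho_0 = \int_0^{T(t)} \rho$, and if $\rho/\rho_0 =: \psi$ is non-increasing, then $\rho(T(t)) = \psi(T(t))\rho_0(T(t)) \ge \psi(t)\rho_0(T(t))$ when $T(t)\le t$... this requires knowing the sign of $T(t)-t$. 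The honest statement is that $\rho/\rho_0$ non-increasing gives $T$ \emph{contracting}; I would invoke it by noting $\rho/\rho_0 = (g/g_0)^n = (n!)(g(t)/t)^n$ is non-increasing because $g(t)/t$ is. I expect \textbf{this monotone-ratio-implies-contraction step to be the main obstacle}, or rather the place needing the most care: one must set it up so that the logarithmic derivative inequality $(\log\rho)'(T(t)) \le (\log\rho_0)'(t)$ — which is exactly $T'\le 1$ after one more manipulation — follows cleanly; the standard trick is to differentiate $\log T'(t) = \log\rho_0(t) - \log\rho(T(t))$ and use monotonicity of $\rho/\rho_0$ together with $T$ increasing. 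Once $T$ is shown contracting, the final conclusions are immediate: $t'_g := T(n) \le n$ since $T(n) \le n$ would follow from $T$ contracting and $T(0)=0$ (as both measures are supported on $\Real_+$ with density vanishing at $0$, forcing $T(0)=0$), namely $T(n) = T(n)-T(0) \le n - 0 = n$; and then $e^{-t'_g}g(t'_g)^n \ge e^{-n}g_0(n)^n = e^{-n} n^n/n!$ follows because $T$ pushing $\nu_0$ to $\nu$ together with contraction and $T(n)=t'_g$ implies, via the change of variables $e^{-t'_g}g(t'_g)^n = e^{-n}g_0(n)^n / T'(n) \ge e^{-n}g_0(n)^n$, using $T'(n)\le 1$ and that $e^{-t}g_0(t)^n$ attains its max at $t=n$ with $T$ mapping the max point to the max point of $e^{-t}g(t)^n$ only approximately — so I would instead just use $e^{-t'_g}g(t'_g)^n \cdot T'(n) = e^{-n}g_0(n)^n$ directly from $T_\#\nu_0 = \nu$ at the level of densities.
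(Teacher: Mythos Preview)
The paper does not prove this proposition: it explicitly defers the proof to future work (``these contraction results, as well as the proof of Proposition~\ref{prop:contraction}, will appear elsewhere''), noting only the analogy with Caffarelli's contraction theorem. So there is no argument in the paper to compare against; I evaluate your proposal on its own merits.

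Your setup is correct: the monotone map $T = F^{-1}\circ F_0$ is the natural candidate, and your observation that the density ratio
\[
\frac{\rho(t)}{\rho_0(t)} \;=\; n!\,\Bigl(\frac{g(t)}{t}\Bigr)^{n}
\]
is non-increasing (since $g$ is concave on $\Real_+$ with $g(0)\ge 0$, so $t\mapsto g(t)/t$ is non-increasing) is correct and useful. From it one gets $F\ge F_0$ everywhere, hence $T(t)\le t$ for all $t$; in particular $T(n)\le n$, which already yields the first ``in particular'' conclusion without any appeal to contraction.

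However, the step you yourself flag as the main obstacle is a genuine gap. The ``classical fact'' you invoke --- that a non-increasing density ratio $\rho/\rho_0$ forces the monotone transport from $\rho_0\,dt$ to $\rho\,dt$ to be $1$-Lipschitz --- is \emph{false}. Counterexample on $[0,1]$: take $\rho_0(t)=2t$ and $\rho(t)=1$. Then $\rho/\rho_0=1/(2t)$ is strictly decreasing and both densities are log-concave, yet $F_0(t)=t^2$, $F(t)=t$, so $T(t)=t^2$ and $T'(t)=2t>1$ for $t>1/2$. Monotonicity of the ratio gives only stochastic domination ($T(t)\le t$), not contraction ($T'\le 1$). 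The ``standard trick'' you sketch --- differentiating $\log T'$ and using monotonicity of $\rho/\rho_0$ --- does not close: at a putative interior maximum of $T'$ one obtains $(\log\rho_0)'(t)=(\log\rho)'(T(t))\,T'(t)$, and the available inequalities on $(\log\rho)'$ versus $(\log\rho_0)'$ go in the wrong direction to force $T'\le 1$.

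Without $T'(n)\le 1$ your derivation of the second conclusion collapses as well: you correctly write $\rho(T(n))\,T'(n)=\rho_0(n)$, but then $\rho(T(n))\ge\rho_0(n)$ needs precisely $T'(n)\le 1$. The actual contraction must exploit more of the specific structure --- the concavity of $g$ together with the shared $e^{-t}$ factor, in the spirit of a Caffarelli-type comparison --- and this is presumably why the authors chose to present the proof elsewhere.
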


This should be compared with a well-known contraction result established by L.~Caffarelli in \cite{CaffarelliContraction} (see also \cite{KimEMilmanGeneralizedCaffarelli} for generalizations), asserting that the Brenier optimal-transport map $T$ pushing forward a Gaussian probability measure $\gamma_n$ on $\Real^n$ onto a probability measure $\mu = \exp(-V) \gamma_n$ with $V : \Real^n \rightarrow \Real$ convex, is in fact contracting Euclidean distance. While we do not know how to extend Proposition \ref{prop:contraction} to higher-dimension, we can obtain contraction results between members of an entire family of one-dimensional model spaces and appropriate concave perturbations thereof, including both the Gaussian measure and the measure $e^{-t} g_0(t)^n dt$ as particular cases. As this is too off-topic for this work, these contraction results, as well as the proof of Proposition \ref{prop:contraction}, will appear elsewhere. 

\smallskip

Let us now complete the proof of Proposition \ref{prop:ZE}:

\begin{proof}[Proof of Proposition \ref{prop:ZE}]
Recall the definition (\ref{eq:def-g}) of the function $g$. 
Lemma \ref{lem:concave} implies that $t_g \in \text{Level}^{(q)}(V)$ for all $q \geq 0$. As the function $E \mapsto e^{-E} g(E)^n$ is integrable, continuous and unimodal (as its logarithm is concave), $\text{Level}^{(q)}(V)$ must be a closed interval $[E^{(q)}_{\min},E^{(q)}_{\max}] \subset [0,\infty)$. Lemma \ref{lem:concave} implies that $E^{(q)}_{\min} \leq t_g \leq n$. We also have:
\[
1 = \int_0^\infty e^{-E} g(E)^n dE \geq \int_{E^{(q)}_{\min}}^{E^{(q)}_{\max}} e^{-E} g(E)^n dE \geq (E^{(q)}_{\max} - E^{(q)}_{\min}) e^{-q} \frac{n^n e^{-n}}{n!} ,
\]
implying that $E^{(q)}_{\max} - E^{(q)}_{\min} \leq e^q \frac{n! e^n}{n^n} = e^q \sqrt{2 \pi n} (1 + o(1))$ by Stirling's formula. To see the reverse inequality, observe that $W(t) := t - n \log g(t)$ satisfies $W'(t) \leq 1$, and therefore:
\[
e^{-W(t_g + q)} \geq e^{-q} e^{-W(t_g)} \geq e^{-q} \frac{n^n e^{-n}}{n!}.
\]
Consequently $t_g + q \in \text{Level}^{(q)}(V)$, implying $E^{(q)}_{\max} - E^{(q)}_{\min} \geq q$. Clearly:
\[
\Vol(K_{E^{(q)}_{\min}})^{\frac{1}{n}} \geq e^{\frac{E^{(q)}_{\min}}{n}} e^{-\frac{q}{n}} c_n ,
\]
with $c_n = \frac{n/e}{(n!)^{1/n}} \rightarrow 1$ by Stirling's formula, where the only possible strict inequality above is when $E^{(q)}_{\min} = 0$, in which case $\Vol(K_{0}) \leq 1$ (as $\int \exp(-V) dx = 1$). Since $E^{(q)}_{\min} \in [0,n]$, the upper and lower estimates on $\Vol(K_{E^{(q)}_{\min}})^{\frac{1}{n}}$ follow. Finally, note that:
\[
\Vol(K_{E^{(q)}_{\max}})^{\frac{1}{n}} = e^{\frac{E^{(q)}_{\max}}{n}} e^{-\frac{q}{n}} c_n = e^{\frac{E^{(q)}_{\min}}{n}} e^{-\frac{q}{n}} c_n  e^{\frac{E^{(q)}_{\max} - E^{(q)}_{\min}}{n}} ,
\]
and so the upper estimate on $\Vol(K_{E^{(q)}_{\max}})^{\frac{1}{n}}$ follows since $(E^{(q)}_{\max} - E^{(q)}_{\min})/n \leq e^q \sqrt{2 \pi/n} (1+o(1)) = o(1)$ as $n \rightarrow \infty$. 
This completes the proof.
\end{proof}

\subsection{Barycenter of $\mu$}

We now turn to prove assertions (4) and (\ref{assertion:Fradelizi}) of Proposition \ref{prop:Level}, which we equivalently reformulate as follows:
\begin{prop}[Refinement of Fradelizi's Bound] \label{prop:Fradelizi}
Let $\mu = \exp(-V(x)) dx$ denote a log-concave probability measure on $\Real^n$ with $\min V = 0$. Let $b_\mu := \int x \; d\mu(x)$ denote the barycenter of $\mu$, and set:
\[
E_V := 1 + \int V(x) e^{-V(x)} dx . 
\]
Then:
\begin{enumerate}
\item $E_V \in \text{Level}^{(1)}(V)$ and $E_V \leq E_{\max}^{(1)} \wedge (n+1)$. 
\item $V(b_\mu) \leq E_V - 1 \leq (E_{\max}^{(1)} - 1) \wedge n$, i.e. $b_\mu \in K_{E_V - 1} \subset K_{(E^{(1)}_{\max} - 1) \wedge n}$.
\end{enumerate}
\end{prop}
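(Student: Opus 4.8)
The plan is to recognize $E_V$ as the barycenter of an explicit one-dimensional log-concave probability measure attached to the level sets of $V$, and then to extract both assertions from one-dimensional facts. Recall from the discussion preceding Proposition \ref{prop:ZE} that $g(E) := \Vol(K_E)^{1/n}$ is non-negative, non-decreasing and concave on $\Real_+$, and that $\int_0^\infty e^{-E} g(E)^n \, dE = 1$ by (\ref{eq:normalization}). Pushing forward Lebesgue measure on $\Real^n$ by $V$ (so that $\int_{\Real^n} h(V)\,dx = \int_0^\infty h(E)\,d(g(E)^n)$) and integrating by parts, one obtains $\int_{\Real^n} V e^{-V}\,dx = -1 + \int_0^\infty E e^{-E} g(E)^n \, dE$, and hence
\[
E_V \;=\; \int_0^\infty E\, e^{-E} g(E)^n \, dE .
\]
In other words, $E_V$ is the barycenter of the probability density $h_V(E) := e^{-E} g(E)^n = e^{-W(E)}$ on $\Real_+$, where $W(E) := E - n\log g(E)$ is convex (since $g$ is concave and positive on $(0,\infty)$); in particular $h_V$ is log-concave.

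For assertion (1), I would apply the one-dimensional case of Fradelizi's centroid inequality \cite{FradeliziCentroid} (classical and elementary in dimension one) to the log-concave density $h_V$: its value at its own barycenter is at least $e^{-1}\sup h_V = e^{-1}M_g$, where $M_g$ denotes, as in Lemma \ref{lem:concave}, the maximum of $e^{-E} g(E)^n$. Since that lemma gives $M_g \ge e^{-n} n^n / n!$, we get $e^{-E_V}\Vol(K_{E_V}) = h_V(E_V) \ge e^{-1}\frac{n^n e^{-n}}{n!}$, i.e. $Z_{E_V} \ge e^{-1}$, so $E_V \in \text{Level}^{(1)}(V) = [E^{(1)}_{\min},E^{(1)}_{\max}]$ and in particular $E_V \le E^{(1)}_{\max}$. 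To obtain $E_V \le n+1$, I would compare $h_V$ with the Gamma density $h_0(E) := e^{-E} E^n / n!$ of barycenter $n+1$: writing $h_0 = e^{-W_0}$ with $W_0(E) = E - n\log E + \log n!$, concavity of $g$ together with $g(0)\ge 0$ gives $E g'(E) \le g(E) - g(0) \le g(E)$, hence $W'(E) = 1 - n g'(E)/g(E) \ge 1 - n/E = W_0'(E)$ on $(0,\infty)$. Thus $W - W_0$ is non-decreasing, so $h_V/h_0$ is non-increasing; by the monotone likelihood ratio property the Gamma law stochastically dominates the law with density $h_V$, and comparing barycenters yields $E_V \le n+1$. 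Together with the previous line, $E_V \le E^{(1)}_{\max}\wedge(n+1)$.

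Assertion (2) then follows at once from Jensen's inequality and the convexity of $V$: $V(b_\mu) = V\brac{\int x\,d\mu} \le \int V\,d\mu = E_V - 1$, while $E_V - 1 \le (E^{(1)}_{\max}-1)\wedge n$ is immediate from assertion (1); equivalently $b_\mu \in K_{E_V-1} \subset K_{(E^{(1)}_{\max}-1)\wedge n}$ by definition of the level sets.

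I expect the only genuinely non-routine point to be the identity expressing $E_V$ as the one-dimensional barycenter in the first paragraph (after which everything reduces to one-dimensional facts); the small technicalities there — continuity of $E\mapsto\Vol(K_E)$ and the decay $E e^{-E} g(E)^n \to 0$, both clear since $g$ grows at most linearly — are harmless. The mildest potential obstacle is proving $E_V \le n+1$ without invoking the (here unproven) contraction of Proposition \ref{prop:contraction}; the likelihood-ratio comparison above avoids this using only the concavity of $g$ already in hand. A self-contained alternative for that bound is the dilation identity: $\lambda \mapsto \int_{\Real^n} e^{-V(\lambda x)/\lambda}\,dx$ is non-increasing on $(0,\infty)$ because $\lambda\mapsto V(\lambda x)/\lambda$ is non-decreasing for convex $V$ with $V(0)=\min V=0$ (which we may assume after a translation, since the minimum of a convex $V$ with $e^{-V}$ integrable is always attained), and differentiating at $\lambda=1$ gives $\int V\,d\mu - n \le 0$, i.e. $E_V \le n+1$.
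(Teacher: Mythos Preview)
Your proof is correct and follows essentially the same approach as the paper: both identify $E_V$ as the barycenter of the one-dimensional log-concave measure $e^{-E} g(E)^n\,dE$, apply the one-dimensional Fradelizi bound together with Lemma~\ref{lem:concave} to place $E_V$ in $\text{Level}^{(1)}(V)$, and use Jensen's inequality for $V(b_\mu)\le E_V-1$. The only minor deviation is in the bound $E_V\le n+1$: the paper obtains it directly from $\int V\,d\mu \le \int \langle\nabla V(x),x\rangle\,d\mu = n$ (which is exactly your ``dilation'' alternative), whereas your primary route via the monotone likelihood ratio comparison with the Gamma density is a pleasant one-dimensional variant yielding the same conclusion.
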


\begin{rem} \label{rem:Fradelizi}
In \cite{FradeliziCentroid}, M.~Fradelizi showed that for any log-concave probability measure $\mu = \exp(-V(x)) dx$ on $\Real^n$,  $V(b_\mu) \leq \min V + n$; we will present a simplified proof of this bound below. While this is sharp whenever $\mu$ is log-affine on an appropriate convex cone, it is easy to construct (non-trivial) examples when this estimate can be significantly improved. For instance, let $V(x) = \norm{x}^p_K$ for any convex body $K$ having the origin in its interior and $p\geq 1$ (and $K$ is scaled so that $\mu$ is a probability measure). In that case, it is immediate to check that $g(E) := \Vol(K_E)^{1/n} = c_{n,p}  E^{1/p}$ (with $c_{n,p} = \Gamma(n/p+1)^{-1/n}$), and we have $t_g := \argmax_E e^{-E} \Vol(K_E) =  n/p$ with the notation of Lemma \ref{lem:concave}. As $t_g \in [E^{(1)}_{\min},E^{(1)}_{\max}]$ and $E^{(1)}_{\max} - E^{(1)}_{\min}\leq e \sqrt{2 \pi n} (1+o(1))$, we conclude that $V(b_\mu) \leq E^{(1)}_{\max} - 1 = \frac{n}{p} (1 +o(1))$ as $n \rightarrow \infty$, yielding a strict improvement over Fradelizi's estimate for any fixed $p > 1$ and large enough $n$. 
\end{rem}

\begin{proof}[Proof of Proposition \ref{prop:Fradelizi}]
We may assume by translating $\mu$ if necessary that the minimum of $V$ as attained at the origin. It follows by Jensen's inequality, convexity of $V$ and integration by parts, that:
\begin{align*}
V(b_\mu) & \leq \int V(x) e^{-V(x)} dx = V(0) + \int (V(x) - V(0)) e^{-V(x)} dx \\
& \leq V(0) + \int \scalar{\nabla V(x),x} e^{-V(x)} dx = V(0) + \int \text{div}(x) e^{-V(x)} dx = V(0) + n ,
\end{align*}
immediately recovering Fradelizi's bound. Recalling our assumption that $\min V = 0$, we have verified that:
\[
V(b_\mu) \leq E_V -1 \leq n . 
\]
It remains to show that $E_V \in \text{Level}^{(1)}(V)$. Recall our notation $g(E) := \Vol(K_E)^{1/n}$, and introduce the following measure on $\Real_+$:
\[
\nu := e^{-W(E)} dE = e^{-E} g(E)^n dE .
\]
Also recall that by (\ref{eq:normalization}) $\nu$ is a probability measure, and as $g$ is concave $W$ is in particular convex and hence $\nu$ is log-concave. 
Integrating by parts on the distribution of $\Vol \set{V \leq E}$, we obtain:
\begin{align*}
& E_V -1 = \int V(x) e^{-V(x)} dx = -\int_0^\infty \frac{d}{dE} (E e^{-E}) \Vol \set{V \leq E} dE \\
&= \int_0^\infty e^{-E}(E-1) \Vol(K_E) dE = \int_0^\infty E e^{-E} g(E)^n dE - 1 ,
\end{align*}
thereby concluding that $E_V$ coincides with the barycenter of $\nu$:
\[
E_V = b_\nu := \int_0^\infty E d\nu(E) . 
\]
Applying Fradelizi's bound in the one-dimensional case, we know that $W(b_\nu) \leq W(t_g) + 1$, where recalling the notation of Lemma \ref{lem:concave}, $t_g$ is the maximum point of $e^{-W}$. Invoking Lemma \ref{lem:concave}, we obtain:
\[
e^{-b_\nu} \Vol(K_{b_\nu}) = e^{-W(b_\nu)} \geq \frac{1}{e} e^{-W(t_g)} \geq \frac{1}{e}\frac{n^n e^{-n}}{n!} .
\]
It follows by definition that $E_V = b_\nu \in \text{Level}^{(1)}(V)$ (and in particular $E_V \leq E^{(1)}_{\max}$), thereby concluding the proof. 

\end{proof}

In addition, we will require the following:
\begin{lem} \label{lem:Fradelizi2}
Let $\nu = \exp(-W(y)) dy$ denote a log-concave probability measure on $\Real$. Then:
\begin{enumerate}
\item $\norm{W'(y) (y-b)}_{L^2(\nu)} \geq \sqrt{2}$, for all $b \in \Real$. 
\item $\norm{(W'(y) (y-b_\nu))_{-}}_{L^\infty(\nu)} \leq 1$, where $b_\nu$ denotes the barycenter of $\nu$. 
\end{enumerate}
\end{lem}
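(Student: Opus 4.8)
The plan is to reduce both assertions to one-variable integration-by-parts identities for the log-concave probability measure $\nu = e^{-W}\,dy$ on $\Real$. For assertion (1), the key observation is that
\[
\int W'(y)(y-b)\,d\nu(y) = \int W'(y)(y-b) e^{-W(y)}\,dy = -\int (y-b)\,d\!\left(e^{-W(y)}\right) = \int d\nu(y) = 1 ,
\]
using that $(y-b)e^{-W(y)} \to 0$ at $\pm\infty$ (a standard consequence of log-concavity and integrability of $\nu$). So the random variable $Z := W'(y)(y-b)$ has mean $1$ under $\nu$, hence $\norm{Z}_{L^2(\nu)}^2 = 1 + \Var_\nu(Z) \geq 1$, which already gives the bound $\geq 1$. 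To upgrade to $\sqrt{2}$, I would compute a second moment-type identity: integrating by parts again, $\int Z^2\,d\nu$ or a related quantity should produce an extra ``$+1$'', reflecting the convexity of $W$. Concretely, one expects something like $\int (W'(y))^2 (y-b)^2 d\nu \geq \int W''(y)(y-b)^2 d\nu + (\text{something})$, or more cleanly, differentiating the identity $\int W'(y-b)e^{-W} = 1$ in $b$ gives $\int (W')^2(y-b) \cdot(-1)\cdot(\dots)$; the honest route is to write $\int Z^2 d\nu$ and integrate by parts once to land on $\int W'(y)(y-b)\big(2 - W''(y)(y-b) - W'(y)(y-b)\big)d\nu$ type terms and use $W'' \geq 0$. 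I expect that after collecting terms one gets exactly $\int Z^2 d\nu \geq 2$, with equality for the exponential/one-sided model.

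For assertion (2), I would use convexity of $W$ together with the fact that $b_\nu$ is the barycenter. Set $\phi(y) := W'(y)(y - b_\nu)$ and suppose for contradiction (or directly) that $\phi(y_0) < -1$ at some point $y_0$. If $y_0 > b_\nu$ then $W'(y_0) < -1/(y_0 - b_\nu) < 0$, so $W$ is decreasing past $b_\nu$; by convexity $W' \leq W'(y_0) < 0$ for all $y \geq y_0$, which forces $W$ to decrease at least linearly, contradicting integrability of $e^{-W}$ — wait, that only rules out $\phi$ very negative far out. The cleaner argument: the negative part of $\phi$ can only be large where $W'(y)$ and $(y-b_\nu)$ have opposite signs, i.e. either $y > b_\nu$ with $W' < 0$, or $y < b_\nu$ with $W' > 0$. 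In the first case, for $y \geq y_0 > b_\nu$ with $W'(y_0) < 0$, convexity gives $W(y) \leq W(y_0) + W'(y_0)(y - y_0)$; then by Markov/barycenter considerations one shows $b_\nu$ cannot actually exceed $y_0 - 1/|W'(y_0)|$ unless the essential sup of $\phi_-$ stays $\leq 1$. I would phrase it as: the condition $W'(y)(y - b_\nu) \geq -1$ is equivalent to $b_\nu \leq y + 1/W'(y)$ when $W'(y) < 0$ and $b_\nu \geq y + 1/W'(y)$ when $W'(y) > 0$; these say that the point where the tangent line to $W$ at $y$ has $W$-value equal to $W(y) + $ appropriate shift contains $b_\nu$, and the extremal (equality) case is $W$ affine on a half-line, for which $\nu$ restricted there is exponential and one computes the barycenter explicitly. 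I would invoke the one-dimensional version of Fradelizi's bound (already used in the proof of Proposition \ref{prop:Fradelizi}) which controls $b_\nu$ relative to the mode.

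The main obstacle I anticipate is assertion (2): turning the intuitive ``equality holds for the exponential model'' into a clean proof requires care about the sign of $W'$ and about where the extremal behavior occurs, and one must handle the case where $W' $ vanishes (so $\phi = 0$ there, harmless) and the tails properly. The slickest approach is probably: fix $y$ with $W'(y) < 0$; by convexity $e^{-W}$ dominates the exponential density $e^{-W(y) - W'(y)(t-y)}$ for $t \geq y$ and is dominated by it for $t \leq y$, from which a direct comparison of $\int t\, e^{-W(t)}dt$ against the exponential barycenter yields $b_\nu - y \leq -1/W'(y)$, i.e. $\phi(y) = W'(y)(y - b_\nu) \geq -1$; the symmetric argument handles $W'(y) > 0$. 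Assertion (1), by contrast, should be essentially a two-step integration by parts and the nonnegativity of $W''$, which I expect to be routine.
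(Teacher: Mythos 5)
Your treatment of assertion (1) is essentially the paper's argument and is fine: after the integration-by-parts identity $\int W'(y)(y-b)\,d\nu = 1$, one more integration by parts gives $\int (W'(y))^2(y-b)^2\,d\nu = \int W''(y)(y-b)^2\,d\nu + 2\int W'(y)(y-b)\,d\nu \ge 2$ using $W''\ge 0$ (the paper phrases this compactly via $(e^{-W})'' = ((W')^2-W'')e^{-W}$), so despite the hedging, that part closes.

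Assertion (2) is where there is a genuine gap, and it lies precisely in the route you designate as ``the slickest approach''. First, the domination claim is wrong as stated: convexity gives $W(t)\ge W(y)+W'(y)(t-y)$ for \emph{all} $t$, hence $e^{-W(t)}\le e^{-W(y)-W'(y)(t-y)}$ everywhere, touching at $t=y$ --- there is no sign change of the comparison at $t=y$. Second, your algebra of the target inequality has both directions flipped: for $W'(y)<0$ the condition $W'(y)(y-b_\nu)\ge -1$ is equivalent to $b_\nu\ \ge\ y+1/W'(y)$ (a \emph{lower} bound on $b_\nu$), and for $W'(y)>0$ to $b_\nu\le y+1/W'(y)$; the conclusion you draw, $b_\nu-y\le -1/W'(y)$ with $W'(y)<0$, only yields $W'(y)(y-b_\nu)\le 1$, i.e.\ a bound on the \emph{positive} part, not on the negative part you need. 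Third, even with the signs fixed, the tangent-exponential comparison does not close by itself: e.g.\ for $W'(y)>0$ it gives $\nu([s,\infty))\le \frac{e^{-W(y)}}{W'(y)}e^{-W'(y)(s-y)}$ for $s\ge y$, which yields stochastic domination by the exponential law with barycenter $y+1/W'(y)$ only when $e^{-W(y)}\le W'(y)$; when the density value at $y$ exceeds the slope (think of a near-uniform density on $[0,1]$ with a tiny tilt $W'=\eps$), the comparison alone is far too weak, and one must bring in the normalization $\int e^{-W}=1$ through some global estimate.

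The fix is the one-line argument you mention only as a fallback, and it is exactly the paper's: by the gradient inequality for the convex $W$, $W'(y)(y-b_\nu)\ge W(y)-W(b_\nu)\ge \min W - W(b_\nu)$, and the one-dimensional Fradelizi bound $W(b_\nu)\le \min W + 1$ (already used in the proof of Proposition \ref{prop:Fradelizi}) gives $W'(y)(y-b_\nu)\ge -1$ pointwise, which is assertion (2). Had you carried that observation through instead of the barycenter comparison, the proof would be complete; as written, the main proposed argument for (2) does not work.
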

\begin{proof}
For the first assertion, we may assume by a standard approximation argument that $W$ is $C^2$ smooth. Using $W'' \geq 0$ and integrating by parts, we verify that:
\begin{align*}
& \int (y-b)^2 W'(y)^2 \exp(-W(y)) dy \geq \int (y-b)^2 (W'(y)^2 - W''(y)) \exp(-W(y)) dy \\
& = \int (y-b)^2 (\exp(-W(y)))'' dy = \int 2 \exp(-W(y)) dy = 2 . 
\end{align*}
For the second assertion, note that by convexity, for any $b \in \Real$:
\[
W'(y) (y-b) \geq W(y) - W(b) \geq \min W - W(b) . 
\]
On the other hand, Fradelizi's estimate (Remark \ref{rem:Fradelizi}) in the one-dimensional case asserts that $W(b_\nu) \leq \min W + 1$, thereby concluding the proof. 
\end{proof}

\subsection{Barycenter and covariance matrix of $K_E$}

We conclude this section by providing a proof of assertions (6) and (7) of Proposition \ref{prop:Level}; assertion (8) will be proved in Section \ref{sec:proofs}. Recall that $X_\mu$ and $X_{E}$ are assumed to be distributed according to $\mu$ and $\lambda_{K_E}$, respectively, and that we denote the corresponding barycenters:
\[
b_\mu := \E(X_\mu) ~,~ b_E := \E(X_{E}) ,
\]
and covariance matrices:
\[
\text{Cov}_\mu := \E((X_\mu -b_\mu) \otimes (X_\mu-b_\mu))  ~,~ \text{Cov}_E := \E((X_E - b_E) \otimes (X_E - b_E)) .
\]
Note that by definition:
\[
D_{\Poin}^{\Lin}(\mu) = \max_{\theta \in S^{n-1}} \sqrt{\scalar{\text{Cov}_\mu \; \theta,\theta}} , \]
where $S^{n-1}$ denotes the Euclidean unit-sphere in $(\Real^n,\abs{\cdot})$.

\begin{prop}
For all $q \geq 0$ and $E \in [E^{(q)}_{\min},E^{(q)}_{\max}]$:
\begin{enumerate}
\item $\abs{b_E - b_\mu} \leq C (1+q) \log (1+n) D_{\Poin}^{\Lin}(\mu)$. 
\item $\text{Cov}_E \leq C' (1+q)^2 \log^2(1+n) \text{Cov}_\mu$ as positive-definite matrices.
\end{enumerate}
Here $C,C'>0$ are two universal numeric constants. 
\end{prop}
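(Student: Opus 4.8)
The key point is that $K_E$ for $E \in [E^{(q)}_{\min}, E^{(q)}_{\max}]$ is ``comparable'' to $\mu$ in a quantitative sense, and standard moment comparison arguments for log-concave measures then transfer estimates from $\mu$ to $\lambda_{K_E}$. Concretely, I would first record that by Proposition \ref{prop:ZE} (assertions (1)--(3)) we have $E \leq E^{(q)}_{\max} \leq E^{(q)}_{\min} + e^q \sqrt{2\pi n}(1+o(1)) \leq n + e^q\sqrt{2\pi n}(1+o(1)) = O((1+q)n)$, and that $Z_E \geq e^{-q}$, i.e. $e^{-E}\Vol(K_E) \geq e^{-q}\frac{n^n e^{-n}}{n!}$. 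The plan is to compare $\lambda_{K_E}$ with the linearized measure $\mu_{K_E} \propto e^{-n\norm{x}_{K_E}} dx$, and $\mu_{K_E}$ (or $\mu_{K_E,w}$ for suitable $w$) with $\mu$ itself, each time controlling a density ratio in some $L^p$, and then invoke the Borell-type lemma / Kannan--Lov\'asz--Simonovits-style fact that for log-concave measures, closeness in such a sense (or containment up to dilation) forces comparability of barycenters and covariances up to the relevant logarithmic factor.

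For assertion (1), the cleanest route: translate so that $b_\mu = 0$; then for any unit vector $\theta$, estimate $\abs{\scalar{b_E,\theta}} = \abs{\int \scalar{x,\theta} d\lambda_{K_E}(x)}$ by splitting $K_E$ into where $\abs{\scalar{x,\theta}}$ is $O(R)$ and its complement, where $R := D^{\Lin}_{\Poin}(\mu)\log(1+n)$ times a constant. On the bulk, $\mu$ has exponential tail decay for linear functionals (Borell's lemma: $\mu(\abs{\scalar{x,\theta}} \geq t D^{\Lin}_{\Poin}(\mu)) \leq C e^{-ct}$), so on the set $\{V \leq E\}$, which carries $\mu$-mass at least $e^{-E}\Vol(K_E) \cdot (\text{something}) $... more precisely, one uses that the uniform measure $\lambda_{K_E}$ is dominated pointwise by $e^E \mu$ on $K_E$ divided by $\Vol(K_E)$ — indeed $d\lambda_{K_E}/d\mu = \frac{1}{\Vol(K_E)} e^{V} 1_{K_E} \leq \frac{e^E}{\Vol(K_E)}$ on $K_E$, and $\frac{e^E}{\Vol(K_E)} = \frac{n^n e^{-n}}{n! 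Z_E} \leq e^q \frac{n^n e^{-n}}{n!}$. Hence $\lambda_{K_E}(\abs{\scalar{x,\theta}} \geq t) \leq e^q \frac{n^n e^{-n}}{n!}\, \mu(\abs{\scalar{x,\theta}} \geq t) \leq e^q \frac{n^n e^{-n}}{n!}\, C e^{-c t/D^{\Lin}_{\Poin}(\mu)}$; since $\frac{n^n e^{-n}}{n!} \approx \frac{1}{\sqrt{2\pi n}}$, this is a useful bound once $t \geq C' D^{\Lin}_{\Poin}(\mu)(q + \log(1+n))$, at which scale the right-hand side is $\leq 1/2$ and the tail integral $\int t\, d\lambda_{K_E}$ over $\{\abs{\scalar{x,\theta}}\geq t\}$ is $O((1+q)\log(1+n) D^{\Lin}_{\Poin}(\mu))$. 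This yields $\abs{\scalar{b_E,\theta}} \leq C(1+q)\log(1+n) D^{\Lin}_{\Poin}(\mu)$ for every $\theta$, which is the claim.

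For assertion (2), I would argue similarly but now bound $\scalar{\text{Cov}_E\,\theta,\theta} = \int \scalar{x - b_E,\theta}^2 d\lambda_{K_E}(x) \leq \int \scalar{x - b_\mu,\theta}^2 d\lambda_{K_E}(x)$ (the barycenter minimizes the second moment), and again use $d\lambda_{K_E}/d\mu \leq e^q \frac{n^n e^{-n}}{n!}$ together with the subexponential tail of $\scalar{x - b_\mu,\theta}$ under $\mu$ at scale $\sigma_\theta := \sqrt{\scalar{\text{Cov}_\mu \theta,\theta}}$: $\mu(\abs{\scalar{x-b_\mu,\theta}} \geq t\sigma_\theta) \leq C e^{-ct}$. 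Integrating $t^2$ against this tail, truncated at the level where the prefactor $e^q\frac{n^n e^{-n}}{n!}$ is absorbed, i.e. $t \gtrsim (1+q)\log(1+n)$, gives $\int \scalar{x-b_\mu,\theta}^2 d\lambda_{K_E} \leq C'(1+q)^2\log^2(1+n)\, \sigma_\theta^2 = C'(1+q)^2\log^2(1+n)\,\scalar{\text{Cov}_\mu\theta,\theta}$; since this holds for all $\theta$, it is exactly $\text{Cov}_E \leq C'(1+q)^2\log^2(1+n)\,\text{Cov}_\mu$ as positive-definite matrices. The main obstacle I anticipate is making the truncation bookkeeping tight enough to land the $\log(1+n)$ (and not $\log^2$) in assertion (1) and exactly $\log^2$ in assertion (2): one must be careful that the constant $e^q \frac{n^n e^{-n}}{n!} \sim e^q/\sqrt{2\pi n}$ is \emph{smaller} than $1$ (a genuine gain), so that $e^q/\sqrt{2\pi n}\cdot C e^{-ct} \leq 1/2$ already for $t$ of order $q + \tfrac12\log n \asymp (1+q)\log(1+n)$, rather than needing the naive $t \asymp E \asymp n$; it is precisely this cancellation — that $e^{-E}\Vol(K_E)$ is only polynomially (in $\sqrt{n}$), not exponentially, small — that prevents the bound from degrading to a power of $n$. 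I would also need the elementary Borell-lemma tail estimates for linear functionals of log-concave measures, which are standard (see e.g. the references in the excerpt), and the fact that $\frac{n^n e^{-n}}{n!} \leq \frac{1}{\sqrt{2\pi n}}$, which is immediate from Stirling.
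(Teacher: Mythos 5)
Your overall strategy coincides with the paper's: on $K_E$ one has $V\leq E$, hence $\lambda_{K_E}\leq \frac{e^E}{\Vol(K_E)}\,\mu$ restricted to $K_E$ (equivalently $\mu(A)\geq e^{-E}\Vol(K_E)\,\lambda_{K_E}(A)$), and membership of $E$ in $[E^{(q)}_{\min},E^{(q)}_{\max}]$ gives $e^{-E}\Vol(K_E)\geq e^{-q}\frac{n^n e^{-n}}{n!}$, so Borell-type exponential tails for linear functionals of $\mu$ transfer to $\lambda_{K_E}$ at the cost of only $O(q+\log n)$ in the exponential threshold; truncation and tail integration then yield (1) and (2). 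The paper's proof differs only cosmetically: for (1) it evaluates the transferred tail at $t=\scalar{b_E,\theta}$ and invokes Gr\"unbaum's theorem ($\lambda_{K_E}(\scalar{x-b_E,\theta}\geq 0)\geq 1/e$) instead of your first-moment truncation, and for (2) it centers at $b_E$ and uses (1) to handle the shift, rather than your (equally valid) remark that the second moment about $b_\mu$ dominates $\scalar{\text{Cov}_E\,\theta,\theta}$; the detour through the linearized measure $\mu_{K_E}$ announced in your plan is never needed, and you indeed drop it in the actual argument.

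One genuine slip to correct: you inverted the comparison constant. From $Z_E=\frac{n!\,e^n}{n^n}e^{-E}\Vol(K_E)$ one gets $\frac{e^E}{\Vol(K_E)}=\frac{n!\,e^n}{n^n Z_E}\leq e^{q}\frac{n!\,e^n}{n^n}\approx e^{q}\sqrt{2\pi n}$, not $e^{q}\frac{n^n e^{-n}}{n!}\approx e^{q}/\sqrt{2\pi n}$ as you wrote. Thus the density ratio $d\lambda_{K_E}/d\mu$ is bounded by a quantity \emph{larger} than $1$ (a polynomially bounded loss), and your parenthetical claim that the prefactor is ``smaller than $1$ (a genuine gain)'' is backwards. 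This does not harm the final estimates: a prefactor of order $e^{q}\sqrt{n}$ in front of $C\exp(-ct/D_{\Poin}^{\Lin}(\mu))$ still pushes the tail below $1/2$ once $t\geq C' D_{\Poin}^{\Lin}(\mu)(q+\log(1+n))$, which is exactly the $(1+q)\log(1+n)$ factor in the statement, and is precisely how the paper absorbs its prefactor $2(1+n)^{C_2(1+q)}$ when integrating for the covariance bound. But the identity as you wrote it is false, and the correct version is what actually explains the logarithmic (rather than power-of-$n$) degradation: $e^{-E}\Vol(K_E)$ is only polynomially, not exponentially, small on $\text{Level}^{(q)}(V)$.
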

\begin{proof}
Assume by translating $\mu$ if necessary that $b_\mu = 0$. For any $\theta \in S^{n-1}$, a well-known consequence of Borell's lemma \cite{Borell-logconcave} (cf. \cite[Appendix A]{SpanishBook}) is that:
\[
\mu(\scalar{x,\theta} \geq t) \leq C \exp(- t / (C L_\theta)) \;\;\; \forall t \in \Real ~,
\]
where $L_\theta := \sqrt{\scalar{\text{Cov}_\mu \; \theta,\theta}}$ and $C > 0$ is a numeric constant. On the other hand:
\[
\mu(\scalar{x,\theta} \geq t) \geq e^{-E} \Vol(K_E) \lambda_{K_E}(\scalar{x,\theta} \geq t) \geq \frac{1}{e^q} \frac{n^n e^{-n}}{n!} \lambda_{K_E}(\scalar{x,\theta} \geq t) ,
\]
for any $E \in [E_{\min}^{(q)},E_{\max}^{(q)}]$. 
Applying this to $t = t_\theta := \scalar{b_E,\theta}$, note that:
\[
\lambda_{K_E}(\scalar{x,\theta} \geq t_\theta) = \lambda_{K_E}(\scalar{x - b_E,\theta} \geq 0) \geq \brac{\frac{n}{n+1}}^n \geq \frac{1}{e} ,
\]
by Gr\"{u}nbaum's Theorem \cite{GrunbaumSymmetry} (cf. \cite{Fradelizi-Habilitation}) on the volume of halfspaces passing through the barycenter of a convex body. Combining everything, we obtain by Stirling's formula:
\[
C \exp(-\scalar{b_E,\theta} / (C L_{\theta})) \geq \mu(\scalar{x,\theta} \geq t_\theta) \geq \frac{1}{e^{q+1} \sqrt{2 \pi n}} (1+o(1)) ,
\]
as $n \rightarrow \infty$. In particular, it follows (as $L_\theta = L_{-\theta}$) that:
\begin{equation} \label{eq:strong-bar}
\abs{\scalar{b_E,\theta}}\leq C L_{\theta} \brac{C_1 + q + \frac{1}{2} \log n}  
\leq C_2 (1+q) \log(1+n) D_{\Poin}^{\Lin}(\mu) ,
\end{equation}
establishing (in fact, a strengthening of) the first assertion. 

The second assertion is proved similarly. Indeed, for all $\theta \in S^{n-1}$, $t \in \Real$ and $E \in [E_{\min}^{(q)},E_{\max}^{(q)}]$:
\begin{align*}
\frac{1}{e^q} \frac{n^n e^{-n}}{n!} & \lambda_{K_E}(\scalar{x-b_E,\theta} \geq t) \leq e^{-E} \Vol(K_E) \lambda_{K_E}(\scalar{x-b_E,\theta} \geq t) \\
& \leq \mu(\scalar{x,\theta} \geq t + \scalar{b_E ,\theta}) \leq C \exp(-(t+ \scalar{b_E ,\theta}) / (C L_\theta)) .
\end{align*}
Invoking (\ref{eq:strong-bar}) and applying Stirling's formula again, we deduce that for all $\theta \in S^{n-1}$ and $t > 0$:
\[
\lambda_{K_E}(\abs{\scalar{x-b_E,\theta}} \geq t) \leq 2 (1+n)^{C_2(1+q)} \exp(- t / (C L_\theta)) . 
\]
Integrating by parts, it follows that:
\[
\scalar{\text{Cov}_{E} \theta,\theta} = \int \scalar{x-b_E,\theta}^2 d\lambda_{K_E}(x) \leq \int_0^\infty 2t \min(1,2 (1+n)^{C_2(1+q)} \exp(- t / (C L_\theta))) dt ,
\]
and the latter integral is easily seen to be bounded above by $C_3 (1+q)^2 \log^2(1+n) L^2_\theta$, thereby concluding the proof. 
\end{proof}

\section{Transferring Concentration: From Annulus to Cone and Uniform Measures} \label{sec:W1}

Given two Borel probability measures $\mu_1,\mu_2$ on a common metric space $(X,d)$, recall that their $1$-Wasserstein distance $W_{d,1}(\mu_1,\mu_2)$ is defined as:
\[
W_{d,1}(\mu_1,\mu_2) := \inf_\pi \int d(x,y) d\pi(x,y) ,
\]
where the infimum is over all Borel probability measures $\pi$ on $X \times X$ having first and second marginals $\mu_1$ and $\mu_2$, respectively. 
By the Monge--Kantorovich--Rubinstein dual characterization of $W_{d,1}$ (e.g. \cite[Case 5.16]{VillaniOldAndNew}), we have:
\[
W_{d,1}(\mu_1 , \mu_2) = \sup \set{ \int f (d\mu_1 - d\mu_2) \; ; \; f : (X,d) \rightarrow \Real \text{ is $1$-Lipschitz} } .
\]
The following immediate consequence of this dual characterization was first noted in \cite[Lemma 5.4]{EMilmanGeometricApproachPartII}, allowing transferring first-moment concentration of Lipschitz functions between two measures which are close in $W_{d,1}$-distance:

\begin{lem}[\cite{EMilmanGeometricApproachPartII}] \label{lem:W1}
For any $1$-Lipschitz function $f$ on $(X,d)$, we have:
\[
\int \abs{f - \med_{\mu_2} f} d\mu_2 \leq \int \abs{f - \med_{\mu_1} f} d\mu_1 + W_{d,1}(\mu_1,\mu_2) . 
\]
\end{lem}

\noindent
Here $\med_{\nu} f \in \Real$ denotes a median of $f$ under the law of (the probability measure) $\nu$, i.e. a median of the probability measure $f_* \nu$ on $\Real$.

\subsection{From Annulus to Cone Measure}

Let $\Omega \subset \Real^n$ denote a compact set containing the origin in its interior and having Lipschitz boundary. We will say that $\Omega$ is a star-shaped body if in addition it contains all intervals adjoining its elements to the origin. Recall that $\norm{x}_\Omega$ denotes the gauge function of $\Omega$. 
We denote by $\sigma_{\partial \Omega}$ the induced cone probability measure on $\partial \Omega$, i.e. the push-forward of $\lambda_{\Omega}$ via the map $x \mapsto \frac{x}{\norm{x}_\Omega}$.
It is well-known and immediate to check that:
\[
\sigma_{\partial \Omega} = \frac{1}{\Vol(\Omega)} \frac{\scalar{ x, \nu}}{n} \cdot \H^{n-1}|_{\partial \Omega} ,
\]
where $\H^{n-1}$ denotes the $n-1$-dimensional Hausdorff measure in Euclidean space $(\Real^n,\abs{\cdot})$, and $\nu$ denotes the ($\H^{n-1}$-a.e. defined) outer unit-normal to $\partial \Omega$. 

\begin{lem} \label{lem:W1-estimate}
Let $\Omega \subset \Real^n$ denote a star-shaped body, and let $\mu_{\Omega,w}$ denote any probability measure on $\Real^n$ of the form:
\[
\mu_{\Omega,w} := \Psi(\norm{x}_\Omega) 1_{\abs{\norm{x}_{\Omega} - 1} \leq w} dx ,
\]
for some Borel function $\Psi : \Real_+ \rightarrow \Real_+$ and $w > 0$. Then for any norm $\norm{\cdot}_0$ on $\Real^n$:
\begin{equation} \label{eq:W1-conclusion}
W_{\norm{\cdot}_0,1}(\mu_{\Omega,w},\sigma_{\partial \Omega}) \leq \frac{n+1}{n} w \int \norm{x}_0 d\lambda_\Omega(x) .
\end{equation}
\end{lem}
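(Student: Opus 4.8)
The plan is to use the dual (Monge--Kantorovich--Rubinstein) characterization of $W_{\norm{\cdot}_0,1}$: it suffices to bound $\int f (d\mu_{\Omega,w} - d\sigma_{\partial\Omega})$ uniformly over all functions $f$ that are $1$-Lipschitz with respect to $\norm{\cdot}_0$. The natural device is to exhibit an explicit coupling $\pi$ of $\mu_{\Omega,w}$ and $\sigma_{\partial\Omega}$ and estimate $\int \norm{x-y}_0 \, d\pi(x,y)$, since the cone measure $\sigma_{\partial\Omega}$ is \emph{by definition} the push-forward of $\lambda_\Omega$ (and hence, after conditioning, of measures of the form $\mu_{\Omega,w}$) under the radial projection $x \mapsto x/\norm{x}_\Omega$. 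So I would take $\pi$ to be the law of $(X, X/\norm{X}_\Omega)$ where $X \sim \mu_{\Omega,w}$; one must first check this really has second marginal $\sigma_{\partial\Omega}$. This holds because radial projection commutes with the radial disintegration: writing $\lambda_\Omega$ in polar-type coordinates $x = r\omega$, $\omega \in \partial\Omega$, the conditioning $\abs{\norm{x}_\Omega - 1}\le w$ only truncates the radial variable $r$ and leaves the angular marginal on $\partial\Omega$ equal to $\sigma_{\partial\Omega}$ (the density $\Psi(\norm{x}_\Omega)$ likewise depends only on $r$), so projecting out $r$ recovers exactly $\sigma_{\partial\Omega}$.

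Granting the coupling, the cost is $\int \norm{X - X/\norm{X}_\Omega}_0 \, d\mu_{\Omega,w}(X) = \int \abs{1 - 1/\norm{X}_\Omega}\,\norm{X}_0 \, d\mu_{\Omega,w}(X)$. On the support of $\mu_{\Omega,w}$ we have $\norm{X}_\Omega \in [1-w, 1+w]$, hence $\abs{1 - 1/\norm{X}_\Omega} = \abs{\norm{X}_\Omega - 1}/\norm{X}_\Omega \le w/(1-w)$. That would give a bound with $\frac{w}{1-w}$ rather than $\frac{n+1}{n} w$, which is worse for $w$ close to $1$; to get the stated $\frac{n+1}{n}w$ one should be a bit more careful and pass instead to $\norm{\cdot}_0$ on $\Omega$ via $\norm{X}_0 = \norm{X}_\Omega \cdot \norm{X/\norm{X}_\Omega}_0$, so that the integrand becomes $\abs{\norm{X}_\Omega - 1}\,\norm{X/\norm{X}_\Omega}_0 \le w \,\norm{X/\norm{X}_\Omega}_0$. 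Now $\int \norm{X/\norm{X}_\Omega}_0 \, d\mu_{\Omega,w}(X) = \int \norm{y}_0 \, d\sigma_{\partial\Omega}(y)$, and a standard polar-coordinate computation relating the cone measure on $\partial\Omega$ to Lebesgue measure on $\Omega$ gives $\int \norm{y}_0 \, d\sigma_{\partial\Omega}(y) = \frac{n+1}{n}\int \norm{x}_0 \, d\lambda_\Omega(x)$ (integrate $\norm{r y}_0 = r\norm{y}_0$ against $r^{n-1}\,dr$ on $[0,1]$ and divide by the normalization $\int_0^1 r^{n-1}dr = 1/n$, yielding the factor $\frac{1/(n+1)}{1/n} = \frac{n}{n+1}$ — one must track which way the factor goes, but the claimed constant $\frac{n+1}{n}$ should fall out after writing $\int_\Omega \norm{x}_0 \, dx$ in polar coordinates).

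Putting these together, $W_{\norm{\cdot}_0,1}(\mu_{\Omega,w},\sigma_{\partial\Omega}) \le w \int \norm{y}_0 \, d\sigma_{\partial\Omega}(y) = \frac{n+1}{n} w \int \norm{x}_0 \, d\lambda_\Omega(x)$, which is (\ref{eq:W1-conclusion}). The main obstacle I anticipate is not the coupling cost estimate, which is elementary once the right substitution $\norm{X}_0 = \norm{X}_\Omega\,\norm{X/\norm{X}_\Omega}_0$ is made, but rather the bookkeeping for the two polar-coordinate identities: (i) verifying rigorously that the radial projection of $\mu_{\Omega,w}$ is $\sigma_{\partial\Omega}$ (requiring the coarea/polar decomposition of Lebesgue measure on a star-shaped body with Lipschitz boundary, which is where the stated formula $\sigma_{\partial\Omega} = \frac{1}{\Vol(\Omega)}\frac{\scalar{x,\nu}}{n}\,\H^{n-1}|_{\partial\Omega}$ enters), and (ii) getting the constant $\frac{n+1}{n}$ exactly right in $\int \norm{y}_0 \, d\sigma_{\partial\Omega} = \frac{n+1}{n}\int\norm{x}_0 \, d\lambda_\Omega$. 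Both are standard but must be done with care; the Lipschitz-boundary hypothesis on $\Omega$ is exactly what legitimizes these manipulations.
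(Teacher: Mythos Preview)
Your proposal is correct and follows essentially the same approach as the paper: you construct the coupling $\pi = (\text{Id} \times T)_* \mu_{\Omega,w}$ with $T(x) = x/\norm{x}_\Omega$, use $\norm{T(x)-x}_0 = \abs{\norm{x}_\Omega - 1}\norm{T(x)}_0 \le w\,\norm{T(x)}_0$, and then identify $\int \norm{T(x)}_0\,d\mu_{\Omega,w} = \int \norm{y}_0\,d\sigma_{\partial\Omega} = \frac{n+1}{n}\int \norm{x}_0\,d\lambda_\Omega$ via the polar identity. The paper's proof is identical in structure, with the last polar identity dispatched in one line by writing $\int \norm{z}_0/\norm{z}_\Omega\,d\lambda_\Omega(z) = \frac{n+1}{n}\int \norm{z}_0\,d\lambda_\Omega(z)$.
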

\begin{proof}
Let $T: \Real^n \setminus \set{0} \rightarrow \partial \Omega$ be defined as $T(x) := \frac{x}{\norm{x}_\Omega}$. Since the density of $\mu_{\Omega,w}$ depends only on $\norm{x}_{\Omega}$, it is clear that $T$ pushes forward $\mu_{\Omega,w}$ onto the cone measure $\sigma_{\partial \Omega}$. Now consider the probability measure $\pi$ on $\Real^n \times \Real^n$ defined by pushing forward $\mu_{\Omega,w}$ via $\text{Id} \times T$, having first and second marginals precisely $\mu_{\Omega,w}$ and $\sigma_{\partial \Omega}$, respectively. It follows by definition that:
\begin{align*}
& W_{\norm{\cdot}_0,1}(\mu_{\Omega,w},\sigma_{\partial \Omega})  \leq \int \norm{y - x}_{0} d\pi(x,y) = \int \norm{T(x) - x}_{0} d\mu_{\Omega,w}(x) \\
& = \int \abs{\norm{x}_\Omega - 1} \norm{T(x)}_{0} d\mu_{\Omega,w}(x) \leq w \int \norm{T(x)}_{0} d\mu_{\Omega,w}(x) = w \int \norm{y}_0 d\sigma_{\partial \Omega}(y) \\
& = w \int \frac{\norm{z}_0}{\norm{z}_\Omega} d\lambda_{\Omega}(z) = w \frac{n+1}{n} \int \norm{z}_0 d\lambda_{\Omega}(z) ,
\end{align*}
where the last equality may be easily verified e.g. by integration in polar coordinates. 
\end{proof}

\begin{rem}
In fact, when $\Psi : \Real_+ \rightarrow \Real_+$ is a log-concave function so that:
\[
\int_{0 \vee (1-w)}^{1+w} \Psi(t) t^n dt = \int_{0 \vee (1-w)}^{1+w} \Psi(t) t^{n-1} dt ,
\]
(and in particular for the function $\Psi(t) = \frac{1}{Z}\exp(-n t)$ when $w = \infty$), one can do better than just using the very crude estimate $\abs{\norm{x}_\Omega - 1} \leq w$ as we did above. In that case, it is not very hard to show that one may replace $w$ by $\min(w , \frac{C}{\sqrt{n}})$ in (\ref{eq:W1-conclusion}), for an appropriate universal constant $C>0$. 
Since in this work we will only be interested in the range $w \leq \frac{1}{\sqrt{n}}$, we have chosen to only provide the most elementary estimate (\ref{eq:W1-conclusion}).
\end{rem}

\subsection{From Cone to Uniform Measure}

The $L^2$ version of the following Hardy-type inequality was proved by the authors in \cite[Theorem 1]{KolesnikovEMilman-HardyKLS}, reducing various spectral-gap questions from $\Omega$ to its boundary. We will require the following $L^1$ version, which in fact is more elementary. For completeness, we formulate it with respect to an arbitrary norm. 
\begin{lem} \label{lem:Hardy}
Let $\Omega \subset \Real^n$ denote a star-shaped body. Then for any Lipschitz function $f : \Omega \rightarrow \Real$ and any norm $\norm{\cdot}_0$ on $\Real^n$ we have:
\begin{equation} \label{eq:Hardy-gen}
\int \abs{f - \med_{\lambda_\Omega} f} d\lambda_{\Omega} \leq \frac{1}{n} \int \norm{x}_0 \norm{\nabla f}_0^* d\lambda_{\Omega} + \int_{\partial \Omega} \abs{f - \med_{\sigma_{\partial \Omega}} f} d\sigma_{\partial \Omega}  .
\end{equation}
In particular, for any $1$-Lipschitz function $f : (\Real^n,\norm{\cdot}_0) \rightarrow \Real$:
\begin{equation} \label{eq:Hardy-for-Lip}
\int \abs{f - \med_{\lambda_\Omega} f} d\lambda_{\Omega} \leq \frac{1}{n} \int \norm{x}_0 d\lambda_{\Omega} + \int_{\partial \Omega} \abs{f - \med_{\sigma_{\partial \Omega}} f} d\sigma_{\partial \Omega}  .
\end{equation}
\end{lem}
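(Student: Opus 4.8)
The plan is to exploit the radial foliation of $\Omega$ by rays through the origin. For $x\in\Omega\setminus\set{0}$ write $r(x):=\norm{x}_\Omega\in(0,1]$ and $z(x):=x/\norm{x}_\Omega\in\partial\Omega$. By the very definition of the cone measure, the map $x\mapsto z(x)$ pushes $\lambda_\Omega$ forward to $\sigma_{\partial\Omega}$, and the polar-coordinate formula adapted to the gauge $\norm{\cdot}_\Omega$ reads $\int h\,d\lambda_\Omega = n\int_{\partial\Omega}\int_0^1 h(sz)\,s^{n-1}\,ds\,d\sigma_{\partial\Omega}(z)$ (sanity check: $h\equiv1$ gives $1$, as $\sigma_{\partial\Omega}$ is a probability measure). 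Since $\Omega$ is a bounded body with the origin in its interior, $\norm{\cdot}_\Omega$ is Lipschitz on $\Real^n$, so this change of variables is legitimate. These two facts are the only structural inputs.

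\emph{Step 1: reduce to the boundary median.} Since a median minimizes the $L^1$-deviation, $\int\abs{f-\med_{\lambda_\Omega}f}\,d\lambda_\Omega\le\int\abs{f-\med_{\sigma_{\partial\Omega}}f}\,d\lambda_\Omega$, and by the triangle inequality the right-hand side is at most $\int\abs{f(x)-f(z(x))}\,d\lambda_\Omega(x)+\int\abs{f(z(x))-\med_{\sigma_{\partial\Omega}}f}\,d\lambda_\Omega(x)$. Because $z$ transports $\lambda_\Omega$ to $\sigma_{\partial\Omega}$, the second term equals $\int_{\partial\Omega}\abs{f-\med_{\sigma_{\partial\Omega}}f}\,d\sigma_{\partial\Omega}$, which is exactly the boundary term appearing in (\ref{eq:Hardy-gen}). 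It thus remains to bound $\int\abs{f(x)-f(z(x))}\,d\lambda_\Omega(x)$ by $\frac1n\int\norm{x}_0\norm{\nabla f}_0^*\,d\lambda_\Omega$.

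\emph{Step 2: radial fundamental theorem of calculus, then integrate.} For $\sigma_{\partial\Omega}$-a.e.\ $z$ the map $s\mapsto f(sz)$ is Lipschitz, hence absolutely continuous, with $\frac{d}{ds}f(sz)=\scalar{\nabla f(sz),z}$ for a.e.\ $s$; consequently $\abs{f(rz)-f(z)}\le\int_r^1\abs{\scalar{\nabla f(sz),z}}\,ds\le\int_r^1\norm{\nabla f(sz)}_0^*\,\norm{z}_0\,ds$ by definition of the dual norm. Writing $\int\abs{f(x)-f(z(x))}\,d\lambda_\Omega(x)=n\int_{\partial\Omega}\int_0^1\abs{f(rz)-f(z)}\,r^{n-1}\,dr\,d\sigma_{\partial\Omega}(z)$, inserting this bound and applying Fubini over the region $\set{0\le r\le s\le1}$ turns the inner $r$-integral into $\int_0^s r^{n-1}\,dr=s^n/n$, which absorbs the prefactor $n$. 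Using $s\,\norm{z}_0=\norm{sz}_0$ and $s^n=s\cdot s^{n-1}$ we are left with $\int_{\partial\Omega}\int_0^1\norm{\nabla f(sz)}_0^*\,\norm{sz}_0\,s^{n-1}\,ds\,d\sigma_{\partial\Omega}(z)$, which by the polar-coordinate formula of the first paragraph equals $\frac1n\int\norm{\nabla f}_0^*\,\norm{x}_0\,d\lambda_\Omega$. Adding the two contributions proves (\ref{eq:Hardy-gen}); taking $f$ to be $1$-Lipschitz with respect to $\norm{\cdot}_0$ forces $\norm{\nabla f}_0^*\le1$ a.e.\ and yields (\ref{eq:Hardy-for-Lip}).

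\emph{Main obstacle.} The only genuinely delicate point is the chain rule $\frac{d}{ds}f(sz)=\scalar{\nabla f(sz),z}$ for a.e.\ $s$ on $\sigma_{\partial\Omega}$-a.e.\ ray, for an $f$ that is merely Lipschitz: the exceptional set where $\nabla f$ fails to exist is Lebesgue-null, hence by Fubini meets a.e.\ ray in a one-dimensional null set, but one still has to observe that wherever $f$ is differentiable in the ambient sense its restriction to a ray is differentiable with the claimed derivative. Alternatively — and perhaps cleaner — one first proves the inequality for smooth $f$, where no such issue arises, and then passes to a general Lipschitz $f$ by a routine mollification argument, using that both sides are stable under uniform convergence of $f$ together with weak-$*$ convergence of the gradients. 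Everything else is bookkeeping with the polar change of variables and Fubini's theorem.
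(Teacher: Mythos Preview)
Your proof is correct, but it takes a different route from the paper's. The paper argues via a single application of the divergence theorem: for the vector field $\xi(x)=x$ and the Lipschitz function $g=\abs{f-\med_{\sigma_{\partial\Omega}}f}$, the identity $\int_\Omega \div(\xi)\,g\,dx=-\int_\Omega\scalar{\xi,\nabla g}\,dx+\int_{\partial\Omega}\scalar{\xi,\nu}g\,d\H^{n-1}$ gives directly
\[
n\int_\Omega g\,d\lambda_\Omega \le \int_\Omega \norm{x}_0\norm{\nabla g}_0^*\,d\lambda_\Omega + n\int_{\partial\Omega} g\,d\sigma_{\partial\Omega},
\]
after which one uses $\norm{\nabla g}_0^*\le\norm{\nabla f}_0^*$ and replaces $\med_{\sigma_{\partial\Omega}}f$ by $\med_{\lambda_\Omega}f$ on the left. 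Your argument instead works pointwise along rays: you split via the triangle inequality and then bound the radial oscillation $\abs{f(x)-f(z(x))}$ by the fundamental theorem of calculus, integrating in polar coordinates and applying Fubini. The two are in a sense dual---your ray-by-ray computation is the ``unpacked'' version of the divergence theorem applied to the position field---but the paper's route is shorter and avoids the explicit polar change of variables, while yours is more elementary in that it needs only one-dimensional calculus and Fubini, making the appearance of the factor $\frac{1}{n}$ perhaps more transparent. The regularity issue you flag (the a.e.\ chain rule along rays for merely Lipschitz $f$) has an exact counterpart in the paper's proof, where one must justify integration by parts against a Lipschitz $g$; in both cases the standard mollification argument you describe suffices.
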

\begin{proof}
Integrating by parts (see e.g. \cite[12.2]{MorganBook4Ed}), we have for any smooth (and in fact, Lipschitz) vector field $\xi$ and function $g$ on $\Omega$:
\[
\int_{\Omega} div(\xi) g dx = - \int_{\Omega} \scalar{\xi , \nabla g} dx + \int_{\partial \Omega} \scalar{\xi,\nu} g \; d\H^{n-1} .
\]
Applying this to $\xi(x) = x$, we obtain:
\[
n \int_{\Omega} g dx \leq \int_{\Omega} \norm{x}_0 \norm{\nabla g}_0^* dx  + \int_{\partial \Omega} \scalar{x,\nu} g \; d\H^{n-1} .
\]
Setting $g = \abs{f - \med_{\sigma_{\partial \Omega}} f}$ and using that $\norm{\nabla g}_0^* \leq \norm{\nabla f}_0^*$, it follows that:
\[
\int_{\Omega}\abs{f - \med_{\sigma_{\partial \Omega}} f} d\lambda_\Omega \leq \frac{1}{n} \int_{\Omega} \norm{x}_0 \norm{\nabla f}^*_0 d\lambda_\Omega + \int_{\partial \Omega} \abs{f - \med_{\sigma_{\partial \Omega}} f} d\sigma_{\partial \Omega} .
\]
Finally, the left-hand-side cannot increase if we replace $\med_{\sigma_{\partial \Omega}} f$ by $\med_{\lambda_\Omega} f$ there, yielding the assertion.  
\end{proof}

\begin{rem}
It is also possible to obtain the particular case (\ref{eq:Hardy-for-Lip}) by estimating $W_1(\nu_{\partial \Omega},\lambda_{\Omega})$ as in the previous subsection, but our proof above has the advantage that it yields the more general (\ref{eq:Hardy-gen}). \end{rem}

\section{Putting Everything Together} \label{sec:proofs}

\subsection{Proof of Main Technical Theorem} \label{subsec:together}

We are now ready to present the proof of our Main Technical Theorem \ref{thm:main} by putting all of the ingredients from the previous sections together. 

\smallskip

We first recall the following well-known facts about one-dimensional log-concave measures. Note that for any probability measure $\nu$ on $(\Real^n,\abs{\cdot})$:
\begin{equation} \label{eq:CovOp}
D_{\Poin}^{\Lin}(\nu)^2 = \norm{\text{Cov}_\nu}_{op} ,
\end{equation}
where $\snorm{\text{Cov}_\nu}_{op}$ denotes the operator norm of $\text{Cov}_\nu$ regarded as a linear operator. Consequently, in the one dimensional case we have $D_{\Poin}^{\Lin}(\nu)^2 = \Var(X)$ where $X$ is distributed according to $\nu$. 
\begin{lem} \label{lem:1D}
Let $\nu = f(x) dx$ denote a log-concave probability measure on $\Real$. Then:
\begin{enumerate}
\item The KLS conjecture is valid: $1 \leq D_{\Poin}(\nu)^2 / D_{\Poin}^{\Lin}(\nu)^2  \leq 12$. 
\item We have $C_1 \leq \norm{f}^2_{L^\infty} D_{\Poin}^{\Lin}(\nu)^2 \leq C_2$ for two universal constants $C_1,C_2 > 0$. 
\end{enumerate}
\end{lem}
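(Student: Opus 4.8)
Both quantities in the statement are invariant under affine rescalings of $\nu$, and in one dimension (\ref{eq:CovOp}) reads $D_{\Poin}^{\Lin}(\nu)^2 = \Var_\nu(X)$, so one may normalize freely. I would treat (2) first. The lower bound uses no log-concavity whatsoever: by the bathtub (rearrangement) principle, any probability density bounded above by $h := \norm{f}_{L^\infty}$ has variance at least $\frac{1}{12 h^2}$ (the extremal configuration being the uniform density on an interval of length $1/h$), which gives $C_1 = 1/12$. For the upper bound log-concavity is essential: writing $f = \exp(-W)$ with $W$ convex and fixing a mode $x_0$ (so $f \leq h$ everywhere), one compares $f$ on an interval $[x_0,x]$ with the log-affine function through its endpoints and invokes $\int f = 1$ to force $f(x) \leq h\, e^{-c\,h\,|x-x_0|}$ once $|x-x_0| \geq C/h$; splitting $\int (x-x_0)^2 f\,dx$ into the bounded region and the exponential tail then yields $\Var_\nu(X) \leq C_2/h^2$.

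For (1), the left inequality is the general fact $D_{\Poin}^{\Lin}(\nu) \leq D_{\Poin}(\nu)$ (test (\ref{eq:Poincare}) on $f(x)=x$, for which the right-hand side of (\ref{eq:Poincare}) equals $D_{\Poin}(\nu)$ and the left-hand side equals $\sqrt{\Var_\nu(X)}$). For the right inequality the plan is: (i) apply Cheeger's inequality in the form $D_{\Poin}(\nu) \leq 2/D_{\Che}(\nu)$, valid for every probability measure (Maz'ya--Cheeger), as recalled in the Introduction; (ii) use the standard fact that for a unimodal — in particular log-concave — density the isoperimetric minimizer among sets of measure $1/2$ is a half-line through the median $m$, so that $D_{\Che}(\nu) = 2 f(m)$; (iii) conclude via the one-dimensional estimate $f(m)^2\,\Var_\nu(X) \geq \tfrac{1}{12}$, with the uniform distribution extremal, which I would establish by a rearrangement argument in the spirit of the lower bound in (2): holding $f(m)$ fixed and rearranging the mass on each side of $m$ inward toward the median (preserving log-concavity and the two half-masses on either side of $m$) can only decrease the variance, the extreme case being the uniform density on the interval of length $1/f(m)$ about $m$. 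Alternatively, the whole of (1) is classical and can simply be cited from Bobkov's work on one-dimensional log-concave measures.

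The genuine obstacle is step (iii) with the sharp constant: when the mode $x_0$ lies far from the median, $f$ need not be bounded by $f(m)$ near $x_0$, so the bathtub principle does not apply verbatim and one must use log-concavity to control how much mass can sit between $x_0$ and $m$ — this is precisely the delicate point that pins the constant down to $12$. If one is content with an unspecified universal constant in place of $12$, this can be sidestepped entirely: combine the upper bound in (2) with a crude log-concavity lower bound for $f(m)$ in terms of $\norm{f}_{L^\infty}$, at the cost of a worse numeric constant.
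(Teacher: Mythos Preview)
The paper's own proof is two sentences of citations: assertion (1) is attributed to Bobkov, and assertion (2) to Milman--Pajor (even case) and Fradelizi / Klartag (general case). Your fallback ``simply cite Bobkov'' is therefore exactly what the paper does.

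Your proposal goes well beyond this by sketching actual arguments, and the sketches are sound. For (2), the bathtub lower bound $C_1 = 1/12$ is correct and requires no log-concavity; the exponential-tail upper bound is the standard mechanism, though your one-line description is loose (the clean version compares $f$ on each side of the mode to the log-affine function with the same slope at distance $\sim 1/h$ and uses $\int f = 1$ to force that slope to be at least of order $h$). For (1), the Cheeger route (i)--(iii) does yield the constant $12$: step (ii) is Bobkov's result that half-lines are isoperimetrically extremal for one-dimensional log-concave measures, and step (iii), namely $f(m)^2 \Var_\nu(X) \geq 1/12$ with equality for the uniform law, is true but --- as you correctly flag --- is not an immediate bathtub consequence since the density can exceed $f(m)$ between the mode and the median. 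A clean way to handle this is to note that log-concavity forces $f \leq e\, f(m)$ everywhere (compare the log-affine interpolant through the mode and the median and use that each half has mass $1/2$), which already gives a universal constant; the sharp $1/12$ requires a finer comparison.

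In short: your proposal is correct and strictly more informative than the paper's proof, which is purely bibliographic. The only genuine gap you leave open is the sharp constant in step (iii), and you identify it accurately.
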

\begin{proof}
The first assertion is due to Bobkov \cite[Corollary 4.3]{BobkovGaussianIsoLogSobEquivalent}.
The second one may be found in \cite{Milman-Pajor-LK} when $f$ is even and in \cite[Theorem 4]{FradeliziHyperplaneSections} or \cite[Lemmas 2.5 and 2.6]{KlartagPerturbationsWithBoundedLK} 
in the general case.
\end{proof}

Recalling the assumptions of Theorem \ref{thm:main}, we are given that for each $i=1,\ldots,n$, $\mu_i := \exp(-V_i(y)) dy$ is a log-concave probability measure on $\Real$ with $\min V_i = m_i$. It follows by Lemma \ref{lem:1D} that $D_{\Poin}(\mu_i) \leq \sqrt{12 C_2} e^{m_i}$ for every $i$. Since $\mu := \mu_1 \otimes \ldots \otimes \mu_n$ is a product measure, by the well-known tensorization property of the Poincar\'e inequality (e.g. \cite{Ledoux-Book}), we conclude that $D_{\Poin}(\mu) \leq \sqrt{12 C_2} M$, where recall $M = \max_{i=1,\ldots,n} e^{m_i} \geq 1$. 

Next, given a probability measure $\nu$ on (say) $\Real^n$, denote $\K_\nu = \K(\Real^n,\abs{\cdot},\nu)$. By a well-known result of M. Gromov and V. Milman \cite{GromovMilmanLevyFamilies} (see also \cite[Corollary 2.7]{EMilman-RoleOfConvexity}), a Poincar\'e inequality always implies the following exponential concentration:
\begin{equation} \label{eq:GM0}
\K_\nu(r) \leq \exp\brac{ - c_0 r / D_{\Poin}(\nu) } \;\;\; \forall r > 0 ~,
\end{equation}
for some universal numeric constant $c_0 > 0$. In fact, it is possible to use any $c_0 \in (0,2)$ at the expense of using an additional multiplicative constant in front of the right-hand-side above (see \cite{Schmucky-PoincareAndMartingales,BobkovLedouxModLogSobAndPoincare}), but we will not require this here. 
It follows that for our measure $\mu$, we have for some numeric constant $c > 0$:
\begin{equation}\label{eq:GM}
\K_\mu(r) \leq \exp(- c r / M) \;\;\; \forall r > 0 .
\end{equation}

\smallskip

Applying Proposition $\ref{prop:Lp}$ with, say $p=2$, we obtain the following estimate, valid for all $w_0 \in (0,1/2]$:
\begin{equation} \label{eq:L2}
\norm{\frac{d\mu_{K_{E,\frac{w_0}{\sqrt{n}}}}}{d\mu}}_{L^2(\mu)} \leq \frac{C'}{Z_E} \frac{1}{w_0} \exp(16 w_0^2 (A^{(\infty,2)})^2) \;\;\; \forall E > E_0 .
\end{equation}
By Proposition \ref{prop:ZE}, we know that $Z_E \geq \frac{1}{e}$ on the entire non-empty closed interval $E \in [E_{\min},E_{\max}]$. 
Invoking Proposition \ref{prop:BartheMilman}, the resulting estimate (\ref{eq:L2}) allows us to transfer the concentration estimate (\ref{eq:GM}) from $\mu$ onto its linearized version $\mu_{K_E,\frac{w_0}{\sqrt{n}}}$ on the corresponding annulus, yielding for all $E \in [E_{\min},E_{\max}] \cap (E_0,\infty)$:
\[
\K_{\mu_{K_E,\frac{w_0}{\sqrt{n}}}}(r) \leq \frac{2 e C'}{w_0} \exp(16 w_0^2 (A^{(\infty,2)})^2) \exp\brac{- \frac{c r}{4M}} \;\;\; \forall r > 0 .
\]
In particular, for any $1$-Lipschitz function $f$ on $(\Real^n,\abs{\cdot})$ and any $r_0 \geq 0$, we have:
\begin{align*}
\int \abs{f - \med_{\mu_{K_E,\frac{w_0}{\sqrt{n}}}} f} d\mu_{K_E,\frac{w_0}{\sqrt{n}}} & \leq r_0 + 2 \int_{r_0}^{\infty} \K_{\mu_{K_E,\frac{w_0}{\sqrt{n}}}}(r) dr \\
& \leq r_0 + \frac{16 e C' M}{c w_0} \exp\brac{16 w_0^2 (A^{(\infty,2)})^2 - \frac{c r_0}{4M}} .
\end{align*}
Optimizing on $r_0$ (after recalling that $M \geq 1$ and $w_0 \leq 1/2$), we deduce for an appropriate numeric constant $C''>0$:
\begin{equation} \label{eq:FM1}
\int \abs{f - \med_{\mu_{K_E,\frac{w_0}{\sqrt{n}}}} f} d\mu_{K_E,\frac{w_0}{\sqrt{n}}} \leq C'' M \brac{w_0^2 (A^{(\infty,2)})^2  + \log \frac{M}{w_0}} .
\end{equation}

\smallskip
Next, by Lemma \ref{lem:W1-estimate} applied to $K_E$ and $\mu_{K_E,w}$ (with $\Psi(t) = 1_{t \in [1-w,1]}$), we know that for all $w > 0$:
\[
W_{\abs{\cdot},1}(\mu_{K_E,w},\sigma_{\partial K_E}) \leq \frac{n+1}{n} w \int \abs{x} d\lambda_{K_E} . 
\]
Invoking Lemma \ref{lem:W1}, the latter estimate allows us to transfer the first-moment concentration (\ref{eq:FM1}) from $\mu_{K_E,\frac{w_0}{\sqrt{n}}}$ onto the cone measure $\sigma_{\partial K_E}$, yielding for any $1$-Lipschitz function $f$ on $(\Real^n,\abs{\cdot})$:
\[
\int \abs{f - \med_{\sigma_{\partial K_E}} f} d\sigma_{\partial K_E} \leq C'' M \brac{w_0^2 (A^{(\infty,2)})^2 + \log \frac{M}{w_0}}  + \frac{n+1}{n} \frac{w_0}{\sqrt{n}} \int \abs{x} d\lambda_{K_E} .
\]

\smallskip
Finally, we invoke Lemma \ref{lem:Hardy} to transfer the latter first-moment concentration from $\sigma_{\partial K_E}$ to $\lambda_{K_E}$, yielding for all $w_0 \in (0,1/2]$: 
\[
\int \abs{f - \med_{\lambda_{K_E}} f} d\lambda_{K_E} \leq C'' M \brac{w_0^2 (A^{(\infty,2)})^2 + \log \frac{M}{w_0}}  + \brac{\frac{n+1}{n} \frac{w_0}{\sqrt{n}} + \frac{1}{n}} \int \abs{x} d\lambda_{K_E} .
\]
Optimizing on $w_0$, we set $w_0 := \frac{1}{2 A^{(\infty,2)}} \in (0,1/2]$ (recall that by definition $A^{(\infty,2)} \geq 1$), obtaining:
\[
\int \abs{f - \med_{\lambda_{K_E}} f} d\lambda_{K_E} \leq C''' M \log (e + A^{(\infty,2)} M)  + \frac{2}{\sqrt{n}} \int \abs{x} d\lambda_{K_E} .
\]

\smallskip
It remains to invoke the following result, established in \cite{EMilman-RoleOfConvexity} in a more general weighted Riemannian setting (see also \cite{EMilmanGeometricApproachPartI,EMilmanIsoperimetricBoundsOnManifolds,EMilmanNegativeDimension} for refinements), asserting the equivalence between concentration, spectral-gap and linear-isoperimetry under appropriate convexity assumptions:
\begin{thm}[\cite{EMilman-RoleOfConvexity}] \label{thm:IsopConcEquiv}
For any log-concave probability measure $\nu$ on $\Real^n$:
\[
D_{\Poin}(\nu) \leq C  \sup \set{ \int \abs{f - \med_\nu f} d\nu \; ; \; \text{$f  : (\Real^n,\abs{\cdot}) \rightarrow \Real$ is $1$-Lipschitz }  } ,
\]
with some universal numeric constant $C > 1$. 
\end{thm}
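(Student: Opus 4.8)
The final statement is the non-trivial ``concentration $\Rightarrow$ spectral gap'' half of E.~Milman's equivalence under convexity, and the plan is to reproduce the semigroup argument behind it. Write $\mathcal{M} := \sup\set{\int \abs{f - \med_\nu f}\,d\nu \; ; \; \text{$f:(\Real^n,\abs{\cdot})\to\Real$ is $1$-Lipschitz}}$ for the right-hand side to be dominated. By Markov's inequality any $1$-Lipschitz $f$ satisfies $\nu(f \geq \med_\nu f + r) \leq \frac1r\int\abs{f-\med_\nu f}\,d\nu \leq \mathcal{M}/r$, so the concentration profile obeys $\K_\nu(3\mathcal{M}) \leq 1/3$. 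On the other hand, the Maz'ya--Cheeger--Buser--Ledoux chain recalled in the Introduction gives $D_{\Poin}(\nu) \leq 2/D_{\Che}(\nu)$, and the Cheeger infimum for a log-concave $\nu$ is attained on a set of measure $1/2$; hence it suffices to prove the linear-isoperimetric lower bound $\nu^+(A) \geq c/\mathcal{M}$ for every Borel $A$ with $\nu(A) = 1/2$.

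The heart of the matter is the implication: under log-concavity, $\K_\nu(r_0) \leq 1/3$ forces $\nu^+(A) \geq c/r_0$ whenever $\nu(A) = 1/2$. Here I would invoke the diffusion semigroup $(P_t)$ canonically associated with $\nu$ -- for $\nu = \lambda_{K_E}$, the Neumann heat semigroup on the convex body $K_E$. Log-concavity of $\nu$, together with convexity of its support (which renders the boundary second fundamental form non-negative in the Neumann case), is exactly the Bakry--\'Emery $CD(0,\infty)$ condition, and I would use its three standard consequences: (i) the gradient commutation $\abs{\nabla P_t g} \leq P_t\abs{\nabla g}$, so $P_t$ preserves $1$-Lipschitz functions; (ii) the reverse Poincar\'e inequality $2t\,\abs{\nabla P_t g}^2 \leq P_t(g^2) - (P_t g)^2$, giving $\mathrm{Lip}(P_t 1_A) \leq 1/\sqrt{2t}$; and (iii) the Buser--Ledoux smoothing estimate $\snorm{P_t 1_A - 1_A}_{L^1(\nu)} \leq C_0\sqrt{t}\,\nu^+(A)$, obtained from $P_t 1_A - 1_A = \int_0^t LP_s 1_A\,ds$ by estimating $\snorm{LP_s 1_A}_{L^1(\nu)}$ through duality against $1$-Lipschitz test functions, using (i) and (ii).

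With these in hand, fix $A$ with $\nu(A) = 1/2$, set $P := \nu^+(A)$, $u_t := P_t 1_A \in [0,1]$ (so $\int u_t\,d\nu = 1/2$), and $g := \sqrt{2t}\,u_t$, which is $1$-Lipschitz by (ii). Splitting the $L^1$-bound (iii) over $A$ (where $1_A = 1$) and over its complement (where $1_A = 0$) yields $\nu(u_t > 3/4) \geq 1/2 - 4C_0\sqrt{t}\,P$ and $\nu(u_t < 1/4) \geq 1/2 - 4C_0\sqrt{t}\,P$. Choosing $t$ so that $4C_0\sqrt{t}\,P = 1/12$, these two disjoint events each have $\nu$-measure $\geq 5/12 > 1/3$; a short case analysis on the location of $\med_\nu g$ relative to $\frac14\sqrt{2t}$ and $\frac34\sqrt{2t}$ then shows, upon applying the characterization of $\K_\nu$ in terms of $1$-Lipschitz functions to $g$ or to $-g$, that $\K_\nu(c_1\sqrt{t}) \geq 5/12$ for a universal $c_1 > 0$. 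Since $\sqrt{t} \asymp 1/P$, monotonicity of $\K_\nu$ and $\K_\nu(3\mathcal{M}) \leq 1/3$ force $1/P \lesssim \mathcal{M}$, i.e. $\nu^+(A) \geq c/\mathcal{M}$. Hence $D_{\Che}(\nu) \geq 2c/\mathcal{M}$ and $D_{\Poin}(\nu) \leq 2/D_{\Che}(\nu) \leq C\mathcal{M}$.

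The one genuinely delicate point is justifying (i)--(iii) at the stated level of generality: a general log-concave $\nu$ need not possess a smooth density, and its support need not be all of $\Real^n$ -- exactly the situation $\nu = \lambda_{K_E}$ that the paper needs. I would handle this either by approximating $\nu$ from within by smooth, strictly log-concave probability measures of full support and verifying that $D_{\Poin}$, $\nu^+$ and $\mathcal{M}$ are stable under the limit, or, following E.~Milman, by working intrinsically on the convex body $K_E$ with the Neumann Laplacian, where convexity of $K_E$ is precisely what preserves the gradient commutation (i) -- the only place where curvature/convexity enters -- up to the boundary. With that in place the remainder is soft measure-theoretic bookkeeping.
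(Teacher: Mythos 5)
The statement you are proving is not proved in this paper at all: Theorem \ref{thm:IsopConcEquiv} is imported verbatim from \cite{EMilman-RoleOfConvexity}, so there is no internal argument to compare against. Judged on its own merits, your sketch is a correct outline of the by-now standard semigroup ("Ledoux-style") proof of Milman's concentration-implies-spectral-gap theorem: Markov to get $\K_\nu(3\mathcal{M})\leq 1/3$, reduction via Maz'ya--Cheeger and the measure-$1/2$ attainment (which the paper's own Remark records for log-concave measures), and then the $CD(0,\infty)$ package --- gradient commutation, reverse Poincar\'e giving $\mathrm{Lip}(P_t 1_A)\leq 1/\sqrt{2t}$, and the $L^1$-smoothing bound $\snorm{P_t 1_A - 1_A}_{L^1(\nu)}\leq C\sqrt{t}\,\nu^+(A)$ --- followed by the two-level-set argument and monotonicity of the concentration profile to force $\nu^+(A)\gtrsim 1/\mathcal{M}$. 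The numerics ($5/12>1/3$, the median case analysis on $\pm g$) check out. Two caveats: first, the duality in your derivation of (iii) should be against bounded ($L^\infty$) test functions, not $1$-Lipschitz ones --- one bounds $\int g\, L P_s 1_A\, d\nu$ via $\abs{\nabla P_s g}\leq \snorm{g}_\infty/\sqrt{2s}$ from the reverse Poincar\'e; as written the step is slightly misstated though easily repaired. Second, the regularity/approximation issue you flag (non-smooth density, support equal to a convex body, Neumann boundary) is genuinely where the technical work lies and you do not carry it out, but both routes you name (inner approximation by smooth strictly log-concave measures with stability of $D_{\Poin}$, $\nu^+$, $\mathcal{M}$ --- the latter via exactly the $W_1$-transfer idea of Lemma \ref{lem:W1} --- or working intrinsically with the Neumann semigroup, where convexity of the support preserves $CD(0,\infty)$ up to the boundary) are viable and standard. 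Note also that the proof in the cited reference \cite{EMilman-RoleOfConvexity} originally proceeded differently (via regularity of isoperimetric minimizers and geometric measure theory); your semigroup route follows the later simplifications and is, if anything, the more elementary and self-contained path.
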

\noindent
As $K_E$ is convex and hence $\lambda_{K_E}$ is a log-concave measure, this verifies the first assertion of Theorem \ref{thm:main}:
\[
 D_{\Poin}(\lambda_{K_E}) \leq C \brac{M \log(e+ A^{(\infty,2)} M) + \frac{1}{\sqrt{n}} \int \abs{x} d\lambda_{K_E}} .
\] 
The second assertion follows since by the triangle and Jensen inequalities:
\[
\frac{1}{\sqrt{n}} \int \abs{x} d\lambda_{K_E} \leq \frac{\abs{b_E}}{\sqrt{n}} + \frac{1}{\sqrt{n}} \int \abs{x-b_E} d\lambda_{K_E} \leq \frac{\abs{b_E}}{\sqrt{n}} + \brac{\frac{1}{n} \int \abs{x-b_E}^2 d\lambda_{K_E}}^{1/2} ,
\]
and:
\begin{equation} \label{eq:PoinLinVsNorm}
\frac{1}{n} \int \abs{x-b_E}^2 d\lambda_{K_E} = \frac{1}{n} \text{tr} \; \text{Cov}_{\lambda_{K_E}} \leq \snorm{\text{Cov}_{\lambda_{K_E}}}_{op} = D_{\Poin}^{\Lin}(\lambda_{K_E})^2 . 
\end{equation}

\subsection{Proof of Theorem \ref{thm:logn}} \label{subsec:together2}

The proof of Theorem \ref{thm:logn} is identical to the one of Theorem \ref{thm:main} described in the previous subsection, with the only difference being in the first step -- instead of invoking the $L^p$ estimate given by Proposition \ref{prop:Lp} for transferring concentration from $\mu$ to $\mu_{K_E,w}$, we invoke the $L^\infty$ estimate of Lemma \ref{lem:Linfty}. Let us sketch the argument.

\smallskip
By translating $\mu$ we may assume that $V(0) = \min V$, where recall the latter value is assumed to be $0$. 
By Lemma \ref{lem:Linfty}, we have for all $w_0 \in (0,1]$:
\begin{equation} \label{eq:Linfty}
\norm{\frac{d\mu_{K_{E,\frac{w_0}{\sqrt{n}}}}}{d\mu}}_{L^\infty(\mu)} \leq \frac{1}{c w_0 Z_E} \exp(w_0 \sqrt{n}) \;\;\; \forall E > 0.
\end{equation}
By Proposition \ref{prop:ZE}, we know that $Z_E \geq \frac{1}{e}$ on the interval $E \in [E_{\min},E_{\max}]$. 
Invoking Proposition \ref{prop:BartheMilman} with $p=\infty$, we transfer the Gromov--Milman concentration (\ref{eq:GM0}) from $\mu$ onto $\mu_{K_E,\frac{w_0}{\sqrt{n}}}$, yielding for all $E \in [E_{\min},E_{\max}]$:
\[
\K_{\mu_{K_E,\frac{w_0}{\sqrt{n}}}}(r) \leq \frac{2 e}{c w_0} \exp(w_0 \sqrt{n}) \exp\brac{- \frac{c_0 r}{2 D_{\Poin}(\mu)}} \;\;\; \forall r > 0 .
\]
The rest of the proof is identical to the one in the previous subsection, with $M$ replaced by $D_{\Poin}(\mu)$ and $w_0^2 (A^{(\infty,2)})^2$ replaced by $w_0 \sqrt{n}$. 
Note that just as with lower bound $M \geq 1$ in the previous subsection, our normalization ensures that $D_{\Poin}(\mu) \geq c > 0$. Indeed:
\[
D_{\Poin}(\mu) \geq D_{\Poin}^{\Lin}(\mu) = \norm{\text{Cov}_\mu}^{1/2}_{op} \geq \brac{\text{det} \; \text{Cov}_\mu}^{1/2n} = L_\mu \geq c > 0 ,
\]
where $L_{\mu}$ denotes the isotropic constant of $\mu$, the last equality holds since we assume that $\mu = \exp(-V) dx$ with $\min V = 0$, and the inequality $L_\mu \geq c > 0$ for all log-concave measures $\mu$ is well-known (see \cite{Milman-Pajor-LK,Klartag-Psi2,GreekBook} for more background on the isotropic constant). 

Repeating the argument in the previous subsection, we obtain for any $1$-Lipschitz function $f$ on $(\Real^n,\abs{\cdot})$:
\[ \int \abs{f - \med_{\mu_{K_E,\frac{w_0}{\sqrt{n}}}} f} d\mu_{K_E,\frac{w_0}{\sqrt{n}}} \leq C'' D_{\Poin}(\mu) \brac{w_0 \sqrt{n} + \log\brac{e +\frac{D_{\Poin}(\mu)}{w_0}}} .
\] Transferring concentration to $\sigma_{\partial K_E}$ and then to $\lambda_{K_E}$ as before, we obtain for all $w_0 \in (0,1]$:
\begin{align*}
\int \abs{f - \med_{\lambda_{K_E}} f} d\lambda_{K_E} & \leq C'' D_{\Poin}(\mu) \brac{w_0 \sqrt{n} + \log\brac{e +\frac{D_{\Poin}(\mu)}{w_0}}}  \\
& + \brac{\frac{n+1}{n} \frac{w_0}{\sqrt{n}} + \frac{1}{n}} \int \abs{x} d\lambda_{K_E} .
\end{align*}
Setting $w_0 = \frac{1}{\sqrt{n}}$, we deduce:
\[
\int \abs{f - \med_{\lambda_{K_E}} f} d\lambda_{K_E} \leq C''' D_{\Poin}(\mu) \log (e + \sqrt{n} D_{\Poin}(\mu)) + \frac{3}{n} \int \abs{x} d\lambda_{K_E}.
\]
Invoking Theorem \ref{thm:IsopConcEquiv}, the assertion of Theorem \ref{thm:logn} will follow as soon as we show that:
\begin{equation} \label{eq:logn-goal}
\frac{1}{n} \int \abs{x} d\lambda_{K_E} \leq C \log(1+n) D_{\Poin}(\mu) 
\end{equation}
(since $D_{\Poin}(\mu) \geq c > 0$). Note that the barycenter of $\lambda_{K_E}$ may not be at the origin. 

\smallskip
To establish (\ref{eq:logn-goal}), note that by \cite[Theorem 4.1]{KLS}, any convex body $K$ in $(\Real^n,\abs{\cdot})$ satisfies:
\[
 K - \int x \; d\lambda_K\subset (n+1) \text{Cov}^{1/2}_{\lambda_K}(B_2^n) ,
\]
where $\text{Cov}^{1/2}_{\lambda_K}$ is considered as a linear map acting on the Euclidean unit-ball $B_2^n$. Since $0 \in K_E$, it follows by (\ref{eq:CovOp}) that:
\[
\frac{1}{n} \int \abs{x} d\lambda_{K_E} \leq \frac{1}{n} \text{diam}(K_E) \leq \frac{n+1}{n} \snorm{\text{Cov}_{\lambda_{K_E}}}_{op}^{1/2} \text{diam}(B_2^n) = 2 \frac{n+1}{n} \snorm{\text{Cov}_{\lambda_{K_E}}}_{op}^{1/2} . \]
But by Proposition \ref{prop:Level} (7):
\[
\snorm{\text{Cov}_{\lambda_{K_E}}}^{1/2}_{op} \leq C \log(1+n) \snorm{\text{Cov}_{\mu}}^{1/2}_{op} = C \log(1+n) D_{\Poin}^{\Lin}(\mu) \leq C \log(1+n) D_{\Poin}(\mu) ,
\]
thereby confirming (\ref{eq:logn-goal}), and hence concluding the proof of Theorem \ref{thm:logn}.

\subsection{Proofs of Remaining Statements}

Let us now conclude the proofs of assertion (8) of Proposition \ref{prop:Level}, Theorem \ref{thm:main2}, and Theorems \ref{thm:intro} and \ref{thm:intro-logn}. 

\begin{proof}[Proof of assertion (8) of Proposition \ref{prop:Level}]
Recalling (\ref{eq:PoinLinVsNorm}) and invoking the well known bath-tub principle (see e.g. \cite{Milman-Pajor-LK}):
\[
D_{\Poin}^{\Lin}(\lambda_{K_E})^2 \geq \frac{1}{n} \int \abs{x-b_E}^2 d\lambda_{K_E}(x) \geq \frac{1}{n} \int \abs{x}^2 d\lambda_{B_E} ,
\]
where $B_E$ is a Euclidean ball centered at the origin and having the same volume as $K_E$. Since $\Vol(B_E)^{1/n} = Vol(K_E)^{1/n} \geq c > 0$ for all $E \geq E_{\min}$ by Proposition \ref{prop:Level} (3), an elementary and well-known computation (see again \cite{Milman-Pajor-LK}) ensures that $D_{\Poin}^{\Lin}(\lambda_{K_E}) \geq c  > 0$ for all $E$ in that range, establishing assertion (8) of that proposition. 
\end{proof}

\begin{proof}[Proof of Theorem \ref{thm:main2}]
By Corollary \ref{cor:main}, we know that for any $E \in \text{Level}(V) = [E_{\min},E_{\max}]$ and  $b \in \text{int}(K_E)$:
\[
 D_{\Poin}(\lambda_{K_E}) \leq  C_1 \brac{M \log(e+ A^{(\infty,2)}(b) M) + \frac{\abs{b - b_{E}}}{\sqrt{n}} + D_{\Poin}^{\Lin}(\lambda_{K_E})} .
 \]
The additive dependence in $D_{\Poin}^{\Lin}(\lambda_{K_E})$ above turns into a multiplicative one by changing the numerical constant $C_1$ and using that $D_{\Poin}^{\Lin}(\lambda_{K_E}) \geq c  > 0$ for all $E \geq E_{\min}$ and that $M \geq 1$. Whenever $b = b_\mu$ lies in $\text{int}(K_E)$ for $E \in \text{Level}(V)$,
we have by Proposition \ref{prop:Level} (6) that $\frac{\abs{b_\mu - b_{E}}}{\sqrt{n}} = o(1)$ as $n \rightarrow \infty$, and so this term may be discarded at the expense of changing again numerical constants. This is indeed the case whenever $(E_V-1) \vee E_{\min} < E \leq E_{\max}$ by Proposition \ref{prop:Level} (\ref{assertion:Fradelizi}), and we may also take $E = (E_V-1) \vee E_{\min}$ by a limiting argument. Note that necessarily $E_V \in [(E_V-1) \vee E_{\min} , E_{\max}]$ by Proposition \ref{prop:Level} (4). 
Finally, Lemma \ref{lem:Fradelizi2} implies that $A^{(\infty,2)}(b_\mu) \leq \sqrt{2} A^{(2)}(b_\mu)$, and so the assertion follows by a final adjustment of constants. 
\end{proof}

\begin{proof}[Proof of Theorems \ref{thm:intro} and \ref{thm:intro-logn}]
The dimension-independent part of the estimate of Theorem \ref{thm:intro} immediately follows from an application of Theorem \ref{thm:main2} for any $E \in [E_{\min},E_{\max}]$ (since by assumption $b_\mu = 0 \in \text{int}(K_E)$). 
The dimension-dependent part follows by Theorem \ref{thm:logn} applied to $\mu = \mu_1 \otimes \ldots \otimes \mu_n$ since $D_{\Poin}(\mu) \leq C$ as explained in Subsection \ref{subsec:together} and since $D_{\Poin}^{Lin}(\lambda_{K_E}) \geq c$ for all $E \geq E_{\min}$ by Proposition \ref{prop:Level} (8). The volume estimate (\ref{eq:intro-vol}) follows by Proposition \ref{prop:Level} (3). Similarly, Theorem \ref{thm:intro-logn} holds for all $E \in [E_{\min},E_{\max}]$ by Theorem \ref{thm:logn}. 
\end{proof}

\subsection{General Formulation After Rescaling} \label{subsec:gen}

\begin{cor}[Main Theorem - Generalized Version] \label{cor:main-scale}
Let $\tilde{W}_i : \Real \rightarrow \Real$, $i=1,\ldots,n$, denote a sequence of convex functions normalized so that $\min \tilde{W}_i = 0$. Assume that $z_i := \int \exp(-\tilde{W}_i(y)) dy < \infty$ and set $\tilde{V_i} = \tilde{W_i} + \log z_i$, $\tilde{\mu}_i = \exp(-\tilde{V_i}(y)) dy$, $\tilde{V}(x) = \sum_{i=1}^n \tilde{V_i}(x_i)$ and $\tilde{\mu} = \exp(-\tilde{V}(x)) dx$.  

Given $\tilde{b} \in \Real^n$, let $\alpha^{(2)}_i = \alpha^{(2)}_i(\tilde{V},\tilde{b})$ and $\alpha^{(\infty,2)}_i = \alpha^{(\infty,2)}_i(\tilde{V},\tilde{b})$ be given by:
\[
\alpha^{(2)}_i := \snorm{\tilde{V}_i'(y) (y-\tilde{b}_i)}_{L^2(\tilde{\mu}_i)} ~,~ \alpha^{(\infty,2)}_i := (1 + \snorm{(\tilde{V}_i'(y) (y-\tilde{b}_i))_{-}}_{L^\infty(\tilde{\mu}_i)}) \vee \alpha^{(2)}_i ,
\]
and set:
\[
A^{(2)}(\tilde{b}) := \frac{1}{\sqrt{n}}\snorm{(\alpha^{(2)}_i(\tilde{V},\tilde{b}))_{i=1}^n}_2 ~,~ A^{(\infty,2)}(\tilde{b})  := \frac{1}{\sqrt{n}}\snorm{(\alpha^{(\infty,2)}_i(\tilde{V},\tilde{b}))_{i=1}^n}_2 .
\]
Denote $z := (\Pi_{i=1}^n z_i)^{1/n}$, $M = \max_{i=1,\ldots,n} \frac{z_i}{z}$.
Set $\tilde{W}(x) := \sum_{i=1}^n \tilde{W}_i(x_i)$ and $V(x) = \tilde{W}(z x)$. 
Given $E > 0$, consider the convex sets:
\[
\tilde{K}_E := \set{W \leq E } = \set{ x \in \Real^n \; ; \; \sum_{i=1}^n \tilde{W}_i(x_i) \leq E} ~,~ K_E := \set{V \leq E} = \frac{1}{z} \tilde{K}_E .
\]
Denote as usual:
\[
\text{Level}(V) := \set{E \geq 0 \; ; \; e^{-E} \Vol(K_E) \geq \frac{1}{e} \frac{n^n e^{-n}}{n!}} .
\]
Then all of the assertions of Proposition \ref{prop:Level} apply to $\text{Level}(V)$, and we have for all $E \in \text{Level}(V)$ and $\tilde{b} \in int(\tilde{K}_E)$:
 \[
 D_{\Poin}(\lambda_{\tilde{K}_E}) \leq C \brac{z M \log(e+ A^{(\infty,2)}(\tilde{b}) M) + \frac{|\tilde{b} - \tilde{b}_{E}|}{\sqrt{n}} + D_{\Poin}^{\Lin}(\lambda_{\tilde{K}_E})} ,
\]
where $\tilde{b}_E := \int x \; d\lambda_{\tilde{K}_E}$.
In addition, setting $\tilde{b} = \tilde{b}_{\tilde{\mu}} := \int x d\tilde{\mu}$, we have:
\[
D_{\Poin}(\lambda_{\tilde{K}_E}) \leq C' M \log(e+ A^{(2)}(\tilde{b}_{\tilde{\mu}}) M) D_{\Poin}^{\Lin}(\lambda_{\tilde{K}_E}) \;\;\; \forall E \in [(E_V - 1) \vee E_{\min} , E_{\max}] . 
\]
\end{cor}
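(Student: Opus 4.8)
The plan is to reduce everything to Corollary~\ref{cor:main} and Theorem~\ref{thm:main2} by an elementary isotropic rescaling of each one-dimensional factor. First I would introduce the renormalized potential: put $m_i := \log(z_i/z)$, define $V_i(y) := \tilde{W}_i(zy) + m_i$ and $V(x) := \sum_{i=1}^n V_i(x_i)$, and set $\mu_i := \exp(-V_i(y))\,dy$, $\mu := \exp(-V(x))\,dx = \mu_1 \otimes \cdots \otimes \mu_n$. A one-line computation gives $\int \exp(-V_i(y))\,dy = e^{-m_i} z_i / z = 1$, so each $\mu_i$ is a probability measure, while $\min V_i = m_i$, $\sum_{i=1}^n m_i = \log(\prod_i z_i / z^n) = 0$ (hence $\min V = 0$), $\max_i e^{m_i} = \max_i z_i/z = M$, and --- since $\sum_i m_i = 0$ --- $V(x) = \sum_i \tilde{W}_i(zx_i) = \tilde{W}(zx)$, so that $K_E = \set{V \le E} = \frac{1}{z}\tilde{K}_E$ as in the statement. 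Thus $V$ satisfies the hypotheses of Theorem~\ref{thm:main}, and $\mu$ is a log-concave probability measure with $\min V = 0$, so Proposition~\ref{prop:Level} applies verbatim to $\text{Level}(V)$, which gives the asserted properties of $E_{\min}, E_{\max}$, etc.

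Next I would record the scaling dictionary between the $(\tilde{W}, \tilde{\mu}, \tilde{K}_E)$ picture and the $(V,\mu,K_E)$ picture. Since $\tilde{K}_E = z K_E$, the Poincaré constant scales linearly under dilations, so $D_{\Poin}(\lambda_{\tilde{K}_E}) = z\,D_{\Poin}(\lambda_{K_E})$; specializing to linear test functions, $D_{\Poin}^{\Lin}(\lambda_{\tilde{K}_E}) = z\,D_{\Poin}^{\Lin}(\lambda_{K_E})$; and pushing $\lambda_{K_E}$ forward by $y \mapsto zy$ gives $\tilde{b}_E = z\,b_E$. On the one-dimensional side, $\tilde{\mu}_i$ is the pushforward of $\mu_i$ under $y \mapsto zy$ (equivalently $\tilde{\mu}$ is the pushforward of $\mu$ under $x \mapsto zx$), whence $\tilde{b}_{\tilde{\mu}} = z\,b_\mu$; moreover $V_i'(y) = z\,\tilde{W}_i'(zy) = z\,\tilde{V}_i'(zy)$, so that for any $b \in \Real^n$, writing $\tilde{b} := zb$, $Y \sim \mu_i$ and $U := zY \sim \tilde{\mu}_i$, one has the identity $V_i'(Y)(Y - b_i) = \tilde{V}_i'(U)(U - \tilde{b}_i)$. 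Consequently the quantities $\alpha^{(\infty,2)}_i$ and $\alpha^{(2)}_i$ computed for $(V,b)$ agree with those computed for $(\tilde{V}, \tilde{b})$, hence $A^{(\infty,2)}(b) = A^{(\infty,2)}(\tilde{b})$ and $A^{(2)}(b) = A^{(2)}(\tilde{b})$ --- in particular the finiteness hypothesis (\ref{eq:main-psi1}) for $V$ follows from the corresponding finiteness for $\tilde{V}$ --- and finally $b \in \text{int}(K_E) \iff zb \in \text{int}(\tilde{K}_E)$.

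With the dictionary in hand the conclusion is immediate. Apply Corollary~\ref{cor:main} to $V,\mu,K_E$ at a point $b \in \text{int}(K_E)$ and multiply the resulting inequality through by $z$: the term $z\,D_{\Poin}^{\Lin}(\lambda_{K_E})$ becomes $D_{\Poin}^{\Lin}(\lambda_{\tilde{K}_E})$, the term $z\abs{b_E - b}/\sqrt{n}$ becomes $\abs{\tilde{b}_E - \tilde{b}}/\sqrt{n}$, and $z\,M\log(e + A^{(\infty,2)}(b)M)$ becomes $z M\log(e + A^{(\infty,2)}(\tilde{b})M)$; this is precisely the first displayed inequality, valid for $E \in \text{Level}(V)$ and $\tilde{b} \in \text{int}(\tilde{K}_E)$. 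For the second inequality I would take $b = b_\mu$, so that $\tilde{b} = \tilde{b}_{\tilde{\mu}}$, apply Theorem~\ref{thm:main2} (valid for all $E \in [(E_V - 1) \vee E_{\min}, E_{\max}]$), and again multiply by $z$; since the estimate is multiplicative --- a ratio against $D_{\Poin}^{\Lin}$ --- no net factor of $z$ survives, giving $D_{\Poin}(\lambda_{\tilde{K}_E}) \le C' M \log(e + A^{(2)}(\tilde{b}_{\tilde{\mu}}) M)\, D_{\Poin}^{\Lin}(\lambda_{\tilde{K}_E})$ on that range. The argument is essentially bookkeeping of where the factor $z$ lands; the one genuinely substantive point is the invariance of the scale- and translation-dependent quantities $A^{(\infty,2)}(b)$ and $A^{(2)}(b)$ under the combined change of variables $y \mapsto zy$, $b \mapsto zb$, which I would verify through the pointwise identity above (and, for the $L^\infty$-control in the $b = b_\mu$ case, through Lemma~\ref{lem:Fradelizi2}).
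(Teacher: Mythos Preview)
Your proof is correct and follows essentially the same approach as the paper: define the rescaled potentials $V_i(y) = \tilde{W}_i(zy) + \log(z_i/z)$ (equivalently $\tilde{V}_i(zy) - \log z$), verify that they satisfy the hypotheses of Theorem~\ref{thm:main}, and then apply Corollary~\ref{cor:main} and Theorem~\ref{thm:main2} together with the scaling relations $\tilde{K}_E = zK_E$, $\tilde{b}_{\tilde{\mu}} = z b_\mu$, and the scale-invariance $\alpha_i(V,b) = \alpha_i(\tilde{V}, zb)$. Your write-up is somewhat more explicit about the ``scaling dictionary'' than the paper's, but the argument is identical in substance.
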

\begin{proof}Denote $V_i(y) := \tilde{V}_i(z y) - \log z$, and note that both $\exp(-\tilde{V}_i) dy$ and $\exp(-V_i) dy$ are probability measures on $\Real$.
Also note that: 
\[
V(x) = \tilde{W}(z x) = \sum_{i=1}^n \tilde{W}_i(z x_i) =  \sum_{i=1}^n V_i(x_i)  ,
\]
and since $\tilde{V}(x) = \sum_{i=1}^n \tilde{V}_i(x_i)$, we see that the probability measure $\tilde{\mu} = \exp(-\tilde{V}) dx$ on $\Real^n$ is obtained by scaling $\mu := \exp(-V) dx$ by a factor of $z$. Lastly, note that:
\[
M = \max_{i=1,\ldots,n} \frac{z_i}{z} = \max_{i=1,\ldots,n} e^{\min V_i},
\]
and that:
\[
 \min V = \sum_{i=1}^n \min V_i = \sum_{i=1}^n (\log z_i - \log z) = 0.
\]

Consequently, we may apply Corollary \ref{cor:main} and Theorem \ref{thm:main2} to the measure $\mu$, the functions $\set{V_i}$ and the associated levels sets $K_E$. 
By scale invariance, we have that $\alpha_i(V , b) = \alpha_i(\tilde{V} , \tilde{b})$ for all $b \in \Real^n$ and  $\tilde{b} = z b$. Clearly $\tilde{b}_{\tilde{\mu}} = z b_\mu$ where $\tilde{b}_{\tilde{\mu}} = \int x d\tilde{\mu}$ and $b_\mu = \int x d\mu$. The assertions for $\tilde{K}_E$ now immediately follow after taking into account that $\tilde{K}_E = z K_E$, implying that $D_{\Poin}(\lambda_{\tilde{K}_E}) = z D_{\Poin}(\lambda_{K_E})$ and $D_{\Poin}^{\Lin}(\lambda_{\tilde{K}_E}) = z D_{\Poin}^{\Lin}(\lambda_{K_E})$.
\end{proof}

\subsection{Confirmation of Examples \ref{ex:1} and \ref{ex:2}} \label{subsec:examples}

The assertion of Example \ref{ex:1} for $\tilde{W}_i(x_i) = (x_i)_+^{p^+_i} + (x_i)_-^{p^-_i}$ with $p_i^{\pm} \in [1,P]$ follows from Corollary \ref{cor:main-scale}. Let us prove this in the generality suggested to us by the referee: we assume that the convex functions $\tilde{W}_i$ satisfy $\min \tilde{W}_i(y) = \tilde{W}_i(0) = 0$, that:
\[ \forall i=1,\ldots,n  \;\;\; 0 < c_1 \leq \int_0^\infty \exp(-\tilde{W}_i(\pm x_i)) dx_i \leq c_2  < \infty , 
\] and that the following ``generalized doubling condition" holds:
\[ \forall i=1,\ldots,n \;\; \; \exists \eps_i > 0 \;\; \; \forall x_i \in \Real \;\;\;  \tilde{W}_i((1+\eps_i)x_i) \leq (1+\eps_i P) \tilde{W}_i(x_i)  .
 \]  The latter condition's sole purpose is to ensure (by convexity) that:
\begin{equation} \label{eq:referee-estimate}
\forall x_i \in \Real \;\;\; 0 \leq \tilde{W}_i'(x_i) x_i \leq \frac{\tilde{W}_i((1+\eps_i)x_i) - \tilde{W}_i(x_i)}{\eps_i} \leq P \tilde{W}_i(x_i) ,
\end{equation}
from whence the extremality of the function $\tilde{W}_i(x_i) = \abs{x_i}^P$ is clearly apparent. 

Denote  $z_i := \int \exp(-\tilde{W}_i(y)) dy$ so that $2 c_1 \leq z_i \leq 2 c_2$ for all $i=1,\ldots,n$. Consequently $2 c_1 \leq z := (\Pi_{i=1}^n z_i)^{1/n} \leq 2 c_2$, $M = \max_{i=1,\ldots,n} \frac{z_i}{z} \leq \frac{c_2}{c_1}$, and Lemma \ref{lem:1D} ensures that the probability measures $\tilde{\mu}_i := \frac{1}{z_i} \exp(-\tilde{W}_i(y)) dy$ satisfy $D_{\Poin}(\tilde{\mu}_i) \leq C$ (independently of $P$).

Note that $y \mapsto \tilde{W}_i(\pm y)$ and $y \mapsto |\tilde{W}_i'(\pm y)|$ are non-decreasing functions on $[0,\infty)$ by unimodality and convexity, respectively. 
Denoting the barycenter $\tilde{b}_i := \int y d\tilde{\mu}_i(y)$, recall that by Fradelizi's estimate (Remark \ref{rem:Fradelizi}) $\tilde{W}_i(\tilde{b}_i) \leq \min \tilde{W}_i + 1 = 1$, and so by unimodality $|\tilde{b}_i| \leq c_2 e$. In addition, convexity implies that $| \tilde{W}_i'(\pm \frac{c_1}{2})| \leq \frac{2}{c_1}$, since otherwise we would have $\int_0^\infty \exp(-\tilde{W}_i(\pm x_i)) dx_i < c_1$. 

We now arrive to the main calculation. Invoking (\ref{eq:referee-estimate}):
\[
\norm{\tilde{W}_i'(x_i) (x_i - \tilde{b}_i) }_{L^{2}(\tilde{\mu}_i)} \leq |\tilde{b}_i | \snorm{\tilde{W}_i'}_{L^2(\tilde{\mu}_i)} + P \snorm{\tilde{W}_i}_{L^2(\tilde{\mu}_i)} . 
\]
Now:
\[
\int |\tilde{W}_i'(x_i)|^2 d\tilde{\mu}_i(x_i) \leq \int \frac{4}{c^2_1} (1 +|\tilde{W}_i'(x_i)|^2 x_i^2) d\tilde{\mu}_i(x_i)  ,
\]
and so by (\ref{eq:referee-estimate}) again, we conclude:
\[
\norm{\tilde{W}_i'(x_i) (x_i - \tilde{b}_i) }_{L^{2}(\tilde{\mu}_i)} \leq \frac{2 c_2 e}{c_1} \sqrt{1 + P^2 \snorm{\tilde{W}_i}^2_{L^2(\tilde{\mu}_i)}} + P \snorm{\tilde{W}_i}_{L^2(\tilde{\mu}_i)}  .
\]
Finally, using the inequality $\frac{t^2}{2} \leq e^t$ for $t \geq 0$, and $\tilde{W}_i(x_i)/2 \geq \tilde{W}_i(x_i/2)$, we obtain:
\begin{align*}
&\snorm{\tilde{W}_i}^2_{L^2(\tilde{\mu}_i)} = \frac{1}{z_i} \int \tilde{W}^2_i \exp(-\tilde{W}_i) dx_i \leq \frac{8}{z_i}\int \exp(-\tilde{W}_i/2) dx_i \\
& \leq \frac{8}{z_i}\int \exp(-\tilde{W}_i(x_i/2)) dx_i = \frac{16}{z_i} \int \exp(-\tilde{W}_i(y)) dy \leq \frac{16 c_2}{c_1} . 
\end{align*}
It follows that $A^{(2)}(\tilde{b}_{\tilde{\mu}}) \leq C P$, where $\tilde{b}_{\tilde{\mu}} = (\tilde{b}_1,\ldots,\tilde{b}_n)$ is the barycenter of $\tilde{\mu} = \tilde{\mu}_1 \otimes \ldots \otimes\tilde{\mu}_n$, and $C$ depends solely on $c_1,c_2$. Invoking Corollary \ref{cor:main-scale}, we deduce that for all $E \in [(E_V-1) \vee E_{\min}, E_{\max}]$ we have:
\[
D_{\Poin}(\lambda_{\tilde{K}_E}) \leq C'' \log(e + P) D_{\Poin}^{\Lin}(\lambda_{\tilde{K}_E}) , 
\]
for:
\[
\tilde{K}_E := \set{ x \in \Real^n \; ; \; \sum_{i=1}^n \tilde{W}_i(x_i) \leq E} . 
\]
Here $E_V,E_{\min},E_{\max}$ refer to $V(x) = \tilde{W}(z x)$ where $\tilde{W}(x) = \sum_{i=1}^n \tilde{W}_i(x_i)$. Note that:
\begin{align*}
E_V - 1 &= \int_{\Real^n} V(x) e^{-V(x)} dx = \frac{1}{z^n} \int_{\Real^n} \tilde{W}(x) e^{-\tilde{W}(x)} dx = \int_{\Real^n} \sum_{i=1}^n \tilde{W}_i(x) e^{-\sum_{i=1}^n \tilde{V}_i(x)} dx \\
& = \sum_{i=1}^n \int_{\Real} \tilde{W}_i(y) e^{-\tilde{V}_i(y)} dy = \sum_{i=1}^n \E \tilde{W}_i(X_i) ,
\end{align*}
where $X_i$ are distributed according to $\tilde{\mu_i} = \exp(-\tilde{V}_i(y)) dy = \frac{1}{z_i} \exp(-\tilde{W}_i(y)) dy$. 
Finally, since $\tilde{K}_E = z K_E$ and $2c_1 \leq z \leq 2 c_2$, the volume estimate (\ref{eq:intro-vol}) for $\tilde{K}_E$ follows from the one ensured for $K_E$ by Proposition \ref{prop:Level} (3).

\smallskip

When $p_i^{\pm} \in [2,P]$, the above estimates may in fact be improved -- we briefly sketch the argument. In this range, the measures $\tilde{\mu}_i$ in fact satisfy a log-Sobolev inequality independently of $P$ (for instance, since they are Lipschitz images of the Gaussian measure - see e.g. \cite{LatalaJacobInfConvolution}). By the tensorization property of the log-Sobolev inequality, it follows that the measures $\tilde{\mu}$ and $\mu$ also satisfy the log-Sobolev inequality with a universal constant independent of $P$ or $n$, and so by the Herbst argument satisfy a Gaussian-type concentration, instead of just an exponential one:
\[
\K_{\mu}(r) \leq \exp(- c r^2) \;\;\; \forall r > 0 ;
\]
we refer to \cite{Ledoux-Book} for more on the log-Sobolev inequality and the Herbst argument. 
Repeating the analysis in Subsections \ref{subsec:together} and \ref{subsec:together2}, one may check that results in a square-root improvement of the previous logarithmic estimates. 

\smallskip

Lastly, Example \ref{ex:2} is an immediate consequence of Theorem \ref{thm:intro-logn}, since when $\text{Hess} V \geq \rho^2 \text{Id}$ for $\rho > 0$, the Bakry--\'Emery criterion \cite{BakryEmery} ensures in particular that $D_{\Poin}(\mu) \leq \frac{1}{\rho}$.

\bibliographystyle{plain}
\bibliography{../../../ConvexBib}

\end{document}